\documentclass[12pt,twoside,reqno]{amsart}
\usepackage{amsfonts}
\usepackage{amssymb}
\usepackage[notcite,notref]{
}

\numberwithin{equation}{section}

\newtheorem{teo}{Theorem}[section]
\newtheorem{prop}[teo]{Proposition}
\newtheorem{lem}[teo]{Lemma}
\newtheorem{cor}[teo]{Corollary}

\theoremstyle{definition}

\newtheorem{rem}[teo]{Remark}

\numberwithin{equation}{section}

\def\a{\alpha}
\def\b{\beta}
\def\l{\lambda}
\def\g{\gamma }
\def\o{\omega}
\def\R{\mathbb{R}}
\def\N{\mathbb{N}}
\def\Z{\mathbb{Z}}
\def\d{\delta}

\def\t{\theta}
\def\f{\varphi}
\def\s{\sigma}

\subjclass[2010]{Primary 46E35, 42B05,  42B10; \newline Secondary 46E30 }


\begin{document}


\title[FRACTIONAL INTEGRALS]{FRACTIONAL INTEGRALS  AND FOURIER TRANSFORMS}

\author[V.I. Kolyada]{V.I. Kolyada}
\address{Department of Mathematics\\
Karlstad University\\
Universitetsgatan 1 \\
651 88 Karlstad\\
SWEDEN} \email{viktor.kolyada@gmail.com}


\maketitle

\date{}

\maketitle

This  paper gives a short survey of some basic results related to estimates of fractional integrals and Fourier transforms.
It is closely adjoint to our previous survey papers \cite{K1998} and \cite{K2007}.
The main methods used in the paper are based on nonincreasing rearrangements. We give alternative proofs of some results.

We observe also that the paper
represents the mini-course given by the author at Barcelona University in October, 2014.

\vskip 8pt

\section{Nonincreasing rearrangements}
\vskip 4pt

Denote by $S_0(\mathbb{R}^n)$ the class of all measurable and
almost everywhere finite functions $f$ on $\mathbb{R}^n$ such that
for each $y>0$
\begin{equation*}
\lambda_f (y) \equiv | \{x \in \mathbb{R}^n : |f(x)|>y \}| <
\infty.
\end{equation*}

A non-increasing rearrangement of a function $f \in
S_0(\mathbb{R}^n)$ is a non-increasing function $f^*$ on
$\mathbb{R}_+ \equiv (0, + \infty)$ such that for any $y>0$
\begin{equation*}
|\{t\in \mathbb{R}_+ : f^*(t)>y\}|= \lambda_f (y).
\end{equation*}
We shall assume in addition that the rearrangement $f^*$ is left
continuous on $(0,\infty).$ Under this condition it is defined
uniquely by
$$
f^*(t)=\inf\{y>0: \lambda_f (y)<t\}, \quad 0<t<\infty.
$$
Besides, we have the equality
\begin{equation*}
f^*(t) = \sup_{|E|=t} \inf_{x \in E} |f(x)|. \,\quad
\end{equation*}
The following relation holds
\begin{equation} \label{sup}
\sup_{|E|=t} \int_E |f(x)| dx = \int_0^t f^*(u) du \,.
\end{equation}
In what follows we denote
\begin{equation*}
f^{**}(t)= \frac{1}{t} \int_0^t f^*(u) du.
\end{equation*}
By (\ref{sup}), the operator $f\mapsto f^{**}$ is subadditive,
$$
(f+g)^{**}(t)\le f^{**}(t)+g^{**}(t).
$$
Moreover, this operator is bounded in $L^p$ for $p>1,$
\begin{equation*}
||f^{**}||_p\le p'||f||_p,\quad 1<p\le \infty.
\end{equation*}

This estimate follows from Hardy's inequality.

\vskip 8pt

\centerline{\bf Hardy-type inequalities} \vskip 8pt

First, we have the classical Hardy's inequalities (see, e.g.,
\cite[p. 196]{SW}).

\begin{lem}\label{hardy}
Let $\a>0$ and $1\le p<\infty$. Then for any non-negative
measurable on $(0,\infty)$ function $\varphi$,
$$
\biggl(\int_0^{\infty}\Big(\int_0^t\f(u)du\Big)^pt^{-\a-1}dt\biggr)
^{1/p}
\le\frac{p}{\a}\biggl(\int_0^{\infty}\Big(t\f(t)\Big)^pt^{-\a-1}dt
\biggr)^{1/p}
$$
and
$$
\biggl(\int_0^{\infty}\Big(\int_t^{\infty}\f(u)du\Big)^pt^{\a-1}dt\biggr)
^{1/p}
\le\frac{p}{\a}\biggl(\int_0^{\infty}\Big(t\f(t)\Big)^pt^{\a-1}dt
\biggr)^{1/p}.
$$
\end{lem}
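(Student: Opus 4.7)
The plan is to prove both inequalities by a single uniform technique: a scaling change of variable that rewrites the inner integral as an integral over $(0,1)$, followed by Minkowski's integral inequality in $L^p$, and then a second change of variable that exposes the pure power of the scaling parameter. This handles all $p \ge 1$ at once; for $p = 1$ the Minkowski step is Fubini's theorem.

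For the first inequality I would substitute $u = ts$ with $s \in (0,1)$, so that
\[
\int_0^t \varphi(u)\,du \;=\; t\int_0^1 \varphi(ts)\,ds.
\]
Writing the left-hand side as the $L^p$-norm (with respect to the measure $t^{-\alpha-1}\,dt$) of the function $t \mapsto \int_0^1 t\varphi(ts)\,ds$, Minkowski's integral inequality gives
\[
\biggl(\int_0^\infty \Big(\int_0^1 t\varphi(ts)\,ds\Big)^p t^{-\alpha-1}\,dt\biggr)^{1/p}
\;\le\; \int_0^1 \biggl(\int_0^\infty \big(t\varphi(ts)\big)^p t^{-\alpha-1}\,dt\biggr)^{1/p} ds.
\]
Now, substituting $r = ts$ in the inner integral and tracking the powers of $s$, the inner $L^p$-norm equals $s^{\alpha/p-1}$ times the right-hand side of the desired inequality. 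The claimed constant $p/\alpha$ then appears as $\int_0^1 s^{\alpha/p-1}\,ds$, which converges precisely because $\alpha>0$.

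The second inequality is completely analogous: the natural substitution is $u = t/s$ with $s \in (0,1)$, which turns $\int_t^\infty \varphi(u)\,du$ into $\int_0^1 \varphi(t/s)\,(t/s^2)\,ds$. The same Minkowski-plus-rescaling argument, with the roles of the powers of $t$ rearranged to accommodate the weight $t^{\alpha-1}$, again reduces the constant to $\int_0^1 s^{\alpha/p-1}\,ds = p/\alpha$.

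There is no real obstacle here: the proof is a clean exploitation of homogeneity. The only thing requiring care is the bookkeeping of the exponents of $s$ after the two substitutions, which must combine to give exactly $s^{\alpha/p-1}$ for the final one-dimensional integral to produce the advertised constant $p/\alpha$. One could alternatively deduce the $p>1$ case from Hölder's inequality, but the Minkowski approach above has the advantage of covering $p=1$ (where, in fact, both inequalities become equalities with constant $1/\alpha$) without any separate argument.
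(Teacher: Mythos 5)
Your proof is correct: the scaling substitution plus Minkowski's integral inequality is exactly the classical argument (it is the proof given in the reference \cite{SW} that the paper cites, since the paper itself states Lemma \ref{hardy} without proof), and your exponent bookkeeping checks out --- the inner norm is $s^{\alpha/p-1}$ times the right-hand side in both cases, yielding the constant $p/\alpha$. Nothing further is needed.
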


\vskip 4pt
 We say that a measurable function $f$ on $(0,\infty)$
is quasi-decreasing if there exists a constant $c>0$ such that
$f(t_1)\le cf(t_2),$ whenever $0<t_2<t_1<\infty$.

We will need also a Hardy-type inequality for quasi-decreasing
functions in the case $0<p<~1$.

\begin{lem}\label{hardy-type}
Let $f$ be a non-negative, quasi-decreasing function on
$(0,\infty).$  Suppose also that $\a>0, \b>-1$ and $0<p<1$. Then
$$
\int_0^{\infty}{u}^{-\a-1}\left(\int_0^{u}f(t)t^{\b}dt\right)^pdu
\le A\int_0^{\infty} u^{-\a-1}\left(f(u)u^{\b+1}\right)^pdu
$$
and
$$
\int_0^{\infty}{u}^{\a-1}\left(\int_0^{u}f(t)t^{\b}dt\right)^p du
\le A\int_0^{\infty} u^{\a-1}\left(f(u)u^{\b+1}\right)^pdu.
$$
\end{lem}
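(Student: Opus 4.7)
My plan uses two standard ingredients for $0<p<1$: a dyadic splitting of $(0,u)$ combined with the quasi-decreasing property of $f$, and the subadditivity $(\sum_k a_k)^p \le \sum_k a_k^p$.

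First I would decompose $(0,u) = \bigcup_{k\ge 1}(u2^{-k}, u2^{-k+1}]$. On each dyadic piece the quasi-decreasing hypothesis yields $f(t) \le c\,f(u2^{-k})$, and $\int_{u2^{-k}}^{u2^{-k+1}} t^{\b}\,dt \le C_{\b}(u2^{-k})^{\b+1}$ since $\b > -1$. Summing over $k$ and raising to the $p$-th power via subadditivity gives the pointwise bound
\[
\Big(\int_0^u f(t)t^{\b}\,dt\Big)^p \le C^p \sum_{k=1}^\infty \big(f(u2^{-k})\big)^p(u2^{-k})^{(\b+1)p}.
\]
For the \emph{first} inequality, multiply by $u^{-\a-1}$, integrate in $u$, swap sum and integral, and substitute $w = u2^{-k}$ in each term. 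The weight and Jacobian combine into $2^{-k\a}$, and $\sum_{k\ge 1}2^{-k\a} < \infty$ because $\a>0$; this produces the first inequality with $A = C^p\sum_k 2^{-k\a}$.

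For the \emph{second} inequality the same pointwise bound is available, but the analogous substitution now produces $2^{+k\a}$ per term and the geometric series diverges, so the bare dyadic approach fails. My alternative plan is to also discretize the outer variable: on $(2^n,2^{n+1}]$ use monotonicity $F(u)\le F(2^{n+1})$ for $F(u) = \int_0^u f(t) t^{\b}\,dt$, then expand $F(2^{n+1}) \le C\sum_{m\le n+1} a_m$ with $a_m = f(2^m)2^{m(\b+1)}$ via the estimate above. This reduces the second inequality to a discrete Hardy-type statement
\[
\sum_n 2^{n\a}\Big(\sum_{m\le n+1} a_m\Big)^p \le A\sum_n 2^{n\a} a_n^p,
\]
which I would try to close by exploiting the inherited growth bound $a_m \le c\,2^{\b+1} a_{m-1}$ to control the tails.

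The main obstacle is exactly this last step. The pairing of the \emph{accumulating} inner integral $\int_0^u$ with the \emph{increasing} outer weight $u^{\a-1}$ is precisely the configuration in which a naive $(\cdot)^p$-subadditivity swap yields $\sum_m a_m^p \sum_{n\ge m}2^{n\a}$, whose inner tail diverges; the quasi-decreasing property must therefore be used a second time, beyond its role in discretizing $F$, to prevent the sequence $(a_m)$ from concentrating at small indices. Making this second use quantitative enough to close the estimate---rather than merely re-use the dyadic structure from Step 1---is where I expect the argument to be most delicate.
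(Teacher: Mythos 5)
Your argument for the first inequality is complete and correct: the dyadic decomposition of $(0,u)$, the bound $f(t)\le c\,f(u2^{-k})$ from quasi-monotonicity, the estimate $\int_{u2^{-k}}^{u2^{-k+1}}t^{\b}\,dt\le C_{\b}(u2^{-k})^{\b+1}$ (valid because $\b>-1$), the $p$-subadditivity, and the rescaling $w=u2^{-k}$ producing the convergent factor $\sum_{k\ge1}2^{-k\a}$ all check out. Note that the paper offers no proof of this lemma at all (it only cites Bergh--Burenkov--Persson for the best constants), so there is nothing to compare your route against; it is the standard one.

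For the second inequality you are right to be stuck, but the obstacle is not a missing trick: the second display is false as printed. For any quasi-decreasing $f\not\equiv 0$ there is a $u_0$ with $\int_0^{u_0}f(t)t^{\b}\,dt\ge\delta>0$, hence the inner integral is $\ge\delta$ for all $u\ge u_0$ and the left-hand side dominates $\delta^{p}\int_{u_0}^{\infty}u^{\a-1}\,du=\infty$ because $\a>0$; yet for $f=\chi_{(0,1)}$ (non-increasing, so quasi-decreasing with $c=1$) the right-hand side equals $A\int_0^1u^{\a-1+(\b+1)p}\,du<\infty$. The same example kills the discrete reduction you propose: with $a_m=f(2^m)2^{m(\b+1)}$ one has $a_m=0$ for $m\ge 0$ while $\sum_{n\ge 0}2^{n\a}$ diverges on the left, so no exploitation of $a_m\le c2^{\b+1}a_{m-1}$ can close it. The statement is evidently a misprint: by analogy with the two classical Hardy inequalities of Lemma \ref{hardy}, the inner integral in the second display should be the tail $\int_u^{\infty}f(t)t^{\b}\,dt$, which is also the form needed when the lemma is applied to the tail term $\int_t^{\infty}f^*g^*$ coming from O'Neil's inequality. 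With that correction your method from the first part goes through verbatim: decompose $(u,\infty)$ into the intervals $(u2^{k-1},u2^{k}]$, bound $f(t)\le c\,f(u2^{k-1})$ there, and the substitution $w=u2^{k-1}$ now produces the convergent factor $\sum_{k\ge1}2^{-(k-1)\a}$ against the weight $u^{\a-1}$; the ``delicate second use of quasi-monotonicity'' you anticipate is not needed.
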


\vskip 4pt Observe that the best constants in these inequalities
were found by Bergh, Burenkov and Persson  \cite{BBP}.

\vskip 6pt

Further, we shall use the following Hardy -- Littlewood
inequality.

\begin{teo} Let $f,g\in S_0(\R^n).$ Then
$$
\int_{\R^n} |f(x)g(x)|dx\le \int_0^\infty f^*(t)g^*(t)dt.
$$
\end{teo}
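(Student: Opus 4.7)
The plan is to combine the layer-cake representation with the identity \eqref{sup}, then a reverse layer-cake applied to $g^*$. Everything is non-negative, so Fubini--Tonelli applies without integrability concerns.

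First, I would use the layer-cake formula on $|g|$ to write
$$
\int_{\R^n} |f(x) g(x)|\,dx = \int_{\R^n} |f(x)| \int_0^\infty \chi_{\{|g|>t\}}(x)\,dt\,dx = \int_0^\infty \biggl( \int_{\{|g|>t\}} |f(x)|\,dx \biggr) dt,
$$
after swapping the order of integration. The set $E_t=\{|g|>t\}$ has measure $\lambda_g(t)$, so by \eqref{sup} the inner integral is bounded by $\int_0^{\lambda_g(t)} f^*(u)\,du$.

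Next, the key geometric step is to observe that since $g^*$ is non-increasing and left-continuous on $(0,\infty)$, the superlevel set is exactly
$$
\{u > 0 : g^*(u) > t\} = (0, \lambda_g(t)).
$$
Consequently
$$
\int_0^{\lambda_g(t)} f^*(u)\,du = \int_0^\infty f^*(u)\,\chi_{\{g^*(u)>t\}}\,du.
$$

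Finally I would exchange the order of integration once more and apply layer-cake in reverse to $g^*$, using $\int_0^\infty \chi_{\{g^*(u)>t\}}\,dt = g^*(u)$, to obtain
$$
\int_{\R^n} |f(x)g(x)|\,dx \le \int_0^\infty \int_0^\infty f^*(u)\,\chi_{\{g^*(u)>t\}}\,du\,dt = \int_0^\infty f^*(u)\,g^*(u)\,du,
$$
which is the desired inequality.

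The only subtle point — and the single place where the specific conventions of the paper matter — is the identity $\{u : g^*(u)>t\} = (0,\lambda_g(t))$; this is where left-continuity of $g^*$ (imposed earlier in the section) is essential. The inequality in the middle step ($\int_{E_t}|f|\le \int_0^{|E_t|} f^*$) is not an equality because $\{|g|>t\}$ need not be the level set that maximizes $\int_E |f|$, but this is exactly what \eqref{sup} provides in sharp form. No new hardship arises beyond applying \eqref{sup} and the two Tonelli interchanges.
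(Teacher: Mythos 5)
Your proof is correct, but it follows a different route from the paper's. The paper's argument is symmetric in $f$ and $g$: it writes the product via a double layer--cake decomposition, $\int|fg|\,dx=\int_0^\infty\!\!\int_0^\infty|\{|f|>u\}\cap\{|g|>v\}|\,du\,dv$, and the only inequality used is the elementary set-theoretic bound $|A\cap B|\le\min(|A|,|B|)$, after which $\min(\l_f(u),\l_g(v))=|\{t:f^*(t)>u,\,g^*(t)>v\}|$ reassembles into $\int_0^\infty f^*g^*\,dt$. You instead apply layer--cake only to $|g|$ and import (\ref{sup}) as the key inequality, $\int_{\{|g|>t\}}|f|\le\int_0^{\l_g(t)}f^*$. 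Both are valid; the paper's version is self-contained and makes the $\min$ structure visible, while yours is shorter but leans on (\ref{sup}), which is precisely the case $g=\chi_E$ of the theorem being proved --- not circular here, since the paper states (\ref{sup}) independently and it admits an elementary direct proof, but worth being aware of. One small imprecision: with the left-continuity convention, $\{u:g^*(u)>t\}$ is an interval with left endpoint $0$ and length $\l_g(t)$, but it may be $(0,\l_g(t)]$ rather than $(0,\l_g(t))$ (when $g^*(\l_g(t))>t$); since the discrepancy is a single point, your integral identity is unaffected.
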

\begin{proof} Applying Fubini's theorem, we get
$$
\begin{aligned}
\int_{\R^n}
|f(x)g(x)|dx&=\int_{\R^n}dx\int_0^{|f(x)|}\,du\int_0^{|g(x)|}\,dv\\
&=\int_0^\infty \int_0^\infty |\{x: |f(x)|>u, |g(x)|>v\}|dudv\\
&\le \int_0^\infty \int_0^\infty \min(\l_f(u), \l_g(v)) dudv\\
&= \int_0^\infty \int_0^\infty |\{t:f^*(t)>u, g^*(t)>v\}|dudv\\
&=\int_0^\infty f^*(t)g^*(t)dt.
\end{aligned}
$$
\end{proof}
\vskip 6pt

Let $0<p,r<\infty.$ A function $f \in S_0(\mathbb{R}^n)$ belongs
to the Lorentz space $L^{p,r}(\mathbb{R}^n)$ if
\begin{equation*}
\|f\|_{p,r} \equiv \left( \int_0^\infty \left( t^{1/p} f^*(t)
\right)^r \frac{dt}{t} \right)^{1/r} < \infty.
\end{equation*}
For $0<p<\infty,$ the space $L^{p,\infty}(\mathbb{R}^n)$ is
defined as the class of all $f \in S_0(\mathbb{R}^n)$ such that
$$
\|f\|_{p,\infty} \equiv \sup_{t>0}t^{1/p} f^*(t)<\infty.
$$
We have that $||f||_{p,p}=||f||_p.$ Further, for a fixed $p$, the
Lorentz spaces $L^{p,r}$ increase as the secondary index $r$
increases. That is, we have the strict embedding
 $L^{p,r} \subset L^{p,s}$ for $r<s$; in particular,
\begin{equation*}
 L^{p,r} \subset L^{p,p} \equiv L^p, \quad
0<r< p
 \end{equation*}
 (see \cite[p. 217]{BS}).

Let $f\in S_0(\R^n).$ The spherically symmetric rearrangement of
$f$ is defined by
$$
f^*_s(x)=f^*(v_n|x|^n), \quad x\in \R^n,
$$
where $v_n=\pi^{n/2}\Gamma(n/2+1)$ is the measure of the
$n-$dimensional unit ball. The function $f_s^*$ is equimeasurable
with $f,$ it possesses the spherical symmetry and decreases
monotonically as $|x|$ increases.

The classical P\'olya-Szeg\"o principle states that for any $f\in
C_0^\infty(\R^n)$ and any $1\le p\le \infty$
\begin{equation*}
||\nabla f^*_s||_p\le ||\nabla f||_p.
\end{equation*}

Another remarkable result on spherically symmetric rearrangements
is the following theorem.
\begin{teo}\label{rear_ineq}
Let $f,$ $g$, and $h$ be  nonnegative functions in $S_0(\R^n).$
Then
$$
\int_{\R_n}\int_{\R_n}f(y)g(x-y)h(x)dxdy\le
\int_{\R_n}\int_{\R_n}f^*_s(y)g^*_s(x-y)h^*_s(x)dxdy.
$$
\end{teo}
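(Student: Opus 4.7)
The plan is to prove the Riesz rearrangement inequality by the classical route of layer-cake decomposition, reduction to characteristic functions of sets, and then Steiner symmetrization to pass from the general $n$-dimensional statement to a one-dimensional lemma about three sets on the line.

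First, I would apply the layer-cake identity $f(y)=\int_0^\infty \chi_{\{f>s\}}(y)\,ds$ (and analogously for $g,h$) to write the trilinear integral as
$$
\int_0^\infty\!\!\int_0^\infty\!\!\int_0^\infty I(A_s,B_t,C_u)\,ds\,dt\,du,
$$
where $A_s=\{f>s\}$, $B_t=\{g>t\}$, $C_u=\{h>u\}$, and
$$
I(A,B,C)=\int_{\R^n}\!\!\int_{\R^n}\chi_A(y)\chi_B(x-y)\chi_C(x)\,dx\,dy.
$$
Because $(\chi_{\{f>s\}})^*_s=\chi_{(A_s)^\sharp}$, where $E^\sharp$ is the centered ball with $|E^\sharp|=|E|$, and since $f^*_s(y)=\int_0^\infty \chi_{(A_s)^\sharp}(y)\,ds$, it suffices to prove $I(A,B,C)\le I(A^\sharp,B^\sharp,C^\sharp)$ for measurable sets of finite measure.

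Next, I would introduce Steiner symmetrization $S_H$ in a hyperplane $H$: for each line $\ell$ orthogonal to $H$, replace $E\cap\ell$ by the centered interval on $\ell$ of the same one-dimensional measure. Steiner symmetrization preserves the Lebesgue measure of a set, and the key monotonicity property is
$$
I(A,B,C)\le I(S_HA,\,S_HB,\,S_HC).
$$
I would verify this by writing $\R^n=H\oplus \R e$ with $e\perp H$, applying Fubini along the $e$-direction, and recognizing that the inner integral, for each fixed pair of points in $H$, is an instance of the one-dimensional trilinear form in $\R$. Thus the $n$-dimensional inequality under Steiner symmetrization follows once it is established in dimension one for characteristic functions of sets of finite measure:
$$
\int_{\R}\!\!\int_{\R}\chi_A(y)\chi_B(x-y)\chi_C(x)\,dx\,dy\le \int_{\R}\!\!\int_{\R}\chi_{A^*}(y)\chi_{B^*}(x-y)\chi_{C^*}(x)\,dx\,dy,
$$
where $A^*$ is the centered symmetric interval of the same length as $A$. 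This $1$-dimensional kernel is proved by approximating $A,B,C$ by finite unions of intervals and, by a direct but careful case analysis (sliding intervals towards the origin never decreases the integral), showing the maximum is attained for centered intervals.

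Finally, I would iterate $S_H$ along an appropriate dense sequence of hyperplanes through the origin. By a standard compactness/convergence argument (originally due to Brascamp--Lieb--Luttinger, going back to ideas of Carleman), the successive Steiner symmetrizations $S_{H_k}\cdots S_{H_1}E$ converge in measure to $E^\sharp$. Since $I(\cdot,\cdot,\cdot)$ is continuous with respect to convergence in measure of sets of bounded measure and is nondecreasing under each $S_{H_k}$ applied simultaneously to the three sets, passing to the limit yields $I(A,B,C)\le I(A^\sharp,B^\sharp,C^\sharp)$. Integrating back over $s,t,u$ delivers the inequality for the original $f,g,h$.

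The main obstacle I anticipate is the one-dimensional case for three sets, which is the combinatorial heart of the argument: the bilinear rearrangement inequality (two functions) is easy because convolution of two symmetric decreasing functions is symmetric decreasing, but with three functions this shortcut is unavailable and one must argue by a direct sliding/truncation procedure on unions of intervals. A secondary technical point is justifying the convergence of iterated Steiner symmetrizations to the spherical rearrangement, which requires a careful choice of hyperplane sequence rather than an immediate application of Steiner's original two-dimensional argument.
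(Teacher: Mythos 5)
The paper does not actually prove this theorem: it is stated as a classical result, with the one-dimensional case attributed to F.~Riesz and the general case to Sobolev, and the reader is sent to \cite{Beckner}, \cite{HLP}, \cite{LL}, \cite{Sob1}. So there is no argument in the text to measure yours against; judged on its own, your outline is the canonical proof (essentially the one in \cite{LL}): layer-cake reduction to characteristic functions, monotonicity of the trilinear form $I(A,B,C)$ under simultaneous Steiner symmetrization (reduced by Fubini, slice by slice, to the one-dimensional statement), and convergence of iterated Steiner symmetrizations to the centered ball. You have also correctly identified the two places where all the work lies. Two cautions on those. For the one-dimensional kernel, the sliding must be performed on the intervals of all three sets \emph{simultaneously}, at matched rates: for single intervals $I$ depends on the centers $\alpha,\beta,\gamma$ only through $|\gamma-\alpha-\beta|$ and is nonincreasing in that quantity, so one contracts all centers toward the origin by a common factor $e^{-t}$ and argues by induction on the total number of intervals, the flow being stopped each time two intervals of one set merge. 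Sliding one set toward the origin while the others stay put can strictly decrease the integral (e.g.\ $A=C=[10,11]$, $B=[0,1]$: moving $A$ to $[0,1]$ kills the integral), so the phrase ``sliding intervals towards the origin never decreases the integral'' needs this qualification to be true. For the limiting step, the continuity you invoke is the estimate $|I(A,B,C)-I(A',B,C)|\le |A\,\triangle\, A'|\min(|B|,|C|)$ and its analogues in the other two arguments; since Steiner symmetrization preserves measure, this bound is uniform along the iteration and legitimizes passing to the limit. With those two points made precise, your plan yields a complete proof; note finally that for $f,g,h\in S_0(\R^n)$ both sides may be $+\infty$, but the layer-cake argument with monotone convergence handles this since all integrands are nonnegative.
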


This inequality first was proved for sequences by Hardy and
Littlewood. Afterwards, it was proved for functions by F. Riesz in
the dimension $n=1$ and by Sobolev for any $n\in \N$ (see
\cite{Beckner}, \cite{HLP}, \cite{LL}, \cite{Sob1}).

\vskip 6pt

The basic properties of rearrangements can be found in the books
\cite{BS}, \cite{ChR}, \cite{HLP}, \cite{KPS}, \cite{LL},
\cite{SW}.

\vskip 6pt

\section{Convolutions}

The {\it convolution} of two functions $f$ and $g$ is defined as
\begin{eqnarray}
\label{conv1}
f*g(x)=g*f(x)=\int_{\mathbb{R}^n}f(x-y)g(y)dy.
\end{eqnarray}
If $f\in L^P(\mathbb{R}^n)$ and $g\in L^{p'}(\mathbb{R}^n)$ for
$1\leq p\leq\infty$, then  by H\"{o}lder's inequality the integral
(\ref{conv1}) is absolutely convergent for every
$x\in\mathbb{R}^n$. Furthermore, in this case the convolution is a
continuous function on $\mathbb{R}^n$. At the same time, there are
other conditions that implies almost everywhere convergence of
(\ref{conv1}). For example, if $f,g\in L^1(\mathbb{R}^n),$ then
\begin{eqnarray}
\nonumber
\int_{\mathbb{R}^n}dx\int_{\mathbb{R}^n}|f(x-y)g(y)|dy&=&\int_{\mathbb{R}^n}dy\int_{\mathbb{R}^n}|f(x-y)g(y)|dx\\
\nonumber
&=&\|f\|_1\|g\|_1<\infty
\end{eqnarray}
and whence $f*g\in L^1(\mathbb{R}^n)$. This implies in particular
that $f*g(x)<\infty$ for a.e. $x\in\mathbb{R}^n$.

The following theorem was proved by W.H. Young.
\begin{teo}\label{Young}
Let $1\leq p,q\leq\infty$, $1\leq r\leq\infty$ and
$$
\frac{1}{p}+\frac{1}{q}=1+\frac{1}{r}.
$$
If $f\in L^p(\mathbb{R}^n)$ and $g\in L^q(\mathbb{R}^n),$ then the integral
(\ref{conv1})
converges absolutely for a.e. $x\in\mathbb{R}^n$ and
\begin{eqnarray}
\label{eq2}
\|f\ast g\|_r\leq \|f\|_p\|g\|_q
\end{eqnarray}
\end{teo}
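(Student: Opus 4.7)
The plan is to establish the inequality via a three-exponent Hölder argument applied to the integrand, followed by Fubini's theorem. First I would dispose of the endpoint cases. If $r=\infty$, then $1/p+1/q=1$ and the bound $|f\ast g(x)|\le \|f\|_p\|g\|_q$ is immediate from Hölder's inequality inside the defining integral. If $p=1$ (so $r=q$), or symmetrically $q=1$, I would use Minkowski's integral inequality together with the translation invariance of Lebesgue measure to write
\[
\|f\ast g\|_q\le \int_{\R^n}|f(y)|\,\|g(\,\cdot-y)\|_q\,dy=\|f\|_1\|g\|_q.
\]
These boundary cases also absorb the configurations in which the intermediate Hölder exponents used below would blow up.

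For the main case $1<p,q,r<\infty$, I would split the integrand as
\[
|f(x-y)g(y)|=\bigl(|f(x-y)|^{p/r}|g(y)|^{q/r}\bigr)\cdot |f(x-y)|^{1-p/r}\cdot |g(y)|^{1-q/r}
\]
and apply the three-factor Hölder inequality in the $y$ variable, with exponents $r$, $pr/(r-p)$, and $qr/(r-q)$. The scaling hypothesis $1/p+1/q=1+1/r$ is exactly what forces the reciprocals of these exponents to sum to $1$. The first factor, raised to the $r$-th power in $y$, integrates to $\int|f(x-y)|^p|g(y)|^q\,dy$; the other two contribute $\|f\|_p^{1-p/r}$ and $\|g\|_q^{1-q/r}$, yielding the pointwise bound
\[
|f\ast g(x)|\le \|f\|_p^{1-p/r}\|g\|_q^{1-q/r}\Bigl(\int_{\R^n}|f(x-y)|^p|g(y)|^q\,dy\Bigr)^{1/r}.
\]

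To finish, I would raise to the $r$-th power, integrate in $x$, and invoke Fubini. The inner integral $\int|f(x-y)|^p\,dx=\|f\|_p^p$ is independent of $y$ by translation invariance, and what remains, $\int|g(y)|^q\,dy=\|g\|_q^q$, combines with the pre-factor to produce $\|f\|_p^r\|g\|_q^r$. Almost everywhere absolute convergence of the convolution integral then follows by applying the same estimate to $|f|$ and $|g|$. The only delicate point is the exponent arithmetic---verifying that $1/r+(r-p)/(pr)+(r-q)/(qr)=1$ under the Young relation---but this is a direct computation, so I expect no genuine obstacle beyond careful bookkeeping.
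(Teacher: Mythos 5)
Your proof is correct and follows essentially the same route as the paper: the same three-factor splitting of $|f(x-y)g(y)|$ (the paper writes it via $\varphi=f^p$, $\psi=g^q$, but the factors and the Hölder exponents $r$, $pr/(r-p)$, $qr/(r-q)$ are identical), followed by raising to the $r$-th power and Fubini. Your explicit treatment of the endpoint cases $r=\infty$, $p=1$, $q=1$ is a welcome bit of extra care that the paper leaves implicit, but it does not change the nature of the argument.
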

\begin{proof}
We may suppose that $f,g\geq 0$. Let $\varphi=f^p$ and $\psi=g^q$. Observe that $1/p\geq 1/r$ and $1/q\geq 1/r$. We have
\begin{eqnarray}
\nonumber
&f(x-y)g(y)=\varphi(x-y)^{1/p}\psi(y)^{1/q}\\
\nonumber
&=[\varphi(x-y)\psi(y)]^{1/r}\varphi(x-y)^{1/p-1/r}\psi(y)^{1/q-1/r}.
\end{eqnarray}
Let
$$
\frac{1}{\lambda}=\frac{1}{p}-\frac{1}{r},\quad\frac{1}{\mu}=\frac{1}{q}-\frac{1}{r}.
$$
Then
$$
\frac{1}{r}+\frac{1}{\lambda}+\frac{1}{\mu}=1.
$$
Using H\"{o}lder's inequality (with three terms) we have
$$
\begin{aligned}
&\int_{\mathbb{R}^n}f(x-y)g(y)dy\\
&\leq\left(\int_{\mathbb{R}^n}\varphi(x-y)\psi(y)dy\right)^{1/r}\|f\|_p^{1-p/r}\|g\|_q^{1-q/r}.
\end{aligned}
$$
This easily implies (\ref{eq2}).
\end{proof}

Generally, this inequality is not sharp. The best constant in
Young's inequality was found by Beckner \cite{Beckner} in 1975 and
independently by Brascamp and Lieb \cite{BrLieb} in 1976 (see also
\cite{Bar}).

\begin{teo}\label{Optimal_Young} Let $f\in L^p(\R^n)$, $g\in L^q(\R^n)$, $1\le p,q\le \infty$, and $1/p+1/q=1+1/r$
$(r\ge 1).$ Then
\begin{equation}\label{optimal_1}
||f\ast g||_r\le \left(\frac{A_pA_q}{A_r}\right)^n ||f||_p||g||_q,
\end{equation}
where
$$
A_s=[s^{1/s}(s')^{-1/s'}]^{1/2}.
$$
\end{teo}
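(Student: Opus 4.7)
The plan is to establish (\ref{optimal_1}) in three stages: identify the conjectural extremals by explicit computation, reduce from dimension $n$ to dimension $1$ by tensorization, and then prove the one-dimensional sharp inequality.

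\emph{Gaussian extremals.} First I would verify that centered Gaussians saturate (\ref{optimal_1}). Taking $f(x)=e^{-a|x|^2}$ and $g(x)=e^{-b|x|^2}$ with $a,b>0$, the quantities $\|f\|_p$ and $\|g\|_q$ are elementary Gaussian integrals, and $f\ast g$ is again a Gaussian with exponent $ab/(a+b)$, so $\|f\ast g\|_r$ also has a closed form. Substituting the scaling relation $1/p+1/q=1+1/r$ and simplifying produces the factor $(A_pA_q/A_r)^n$ exactly, confirming that the constant is sharp and indicating that any proof of the upper bound must, at some stage, compare arbitrary $f,g$ to Gaussians.

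\emph{Tensorization.} The second stage reduces matters to $n=1$. Assume inductively that the sharp bound holds on $\R^{n-1}$ with constant $(A_pA_q/A_r)^{n-1}$, split coordinates as $x=(x_1,x')\in\R\times\R^{n-1}$, and write the full convolution as an $(n-1)$-dimensional integral of one-dimensional convolutions. Minkowski's integral inequality in $x_1$ combined with the one-dimensional sharp bound applied to the inner convolutions yields, for each fixed $x'$, a pointwise estimate of $\|(f\ast g)(\cdot,x')\|_{L^r(\R)}$ by $(A_pA_q/A_r)$ times an $(n-1)$-dimensional convolution of the sections $\|f(\cdot,y')\|_{L^p(\R)}$ and $\|g(\cdot,z')\|_{L^q(\R)}$. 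Applying the inductive hypothesis in $x'$ and invoking Fubini closes the induction.

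\emph{One-dimensional sharp inequality.} This is the principal obstacle. I would recast the claim in trilinear form as
\[
I(f,g,h)=\int_{\R}\int_{\R} f(y)g(x-y)h(x)\,dx\,dy\le C\,\|f\|_p\|g\|_q\|h\|_{r'},
\]
and invoke Theorem \ref{rear_ineq} to reduce to the case where $f,g,h$ are nonnegative and symmetric-decreasing, since such a replacement increases $I$ while preserving all three norms. Within this restricted class one uses a scaling and compactness argument to secure an extremizer and then studies its Euler--Lagrange equation. The critical rigidity step---showing that every extremizer must be Gaussian---is what makes the sharp constant delicate: the Euler--Lagrange system is nonlinear and nonlocal, and cannot be solved by inspection. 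Brascamp and Lieb handle it by a rescaling scheme that iteratively drives symmetric-decreasing candidates toward Gaussians; Beckner's parallel route sidesteps rigidity by lifting the sharp two-point Bonami inequality on $\{-1,1\}$ to $\R$ through the central limit theorem and the Ornstein--Uhlenbeck semigroup. Either strategy requires analytic machinery well beyond the H\"older argument used for Theorem \ref{Young}, and once the one-dimensional bound is in hand the Gaussian computation of the first stage confirms that $C=A_pA_q/A_r$ is optimal.
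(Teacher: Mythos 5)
The paper itself gives no proof of this theorem: it is a survey statement credited to Beckner and to Brascamp--Lieb, accompanied only by comments (the Gaussian extremals $e^{-p'x^2}$, $e^{-q'x^2}$, the multiplicativity of the constant in $n$, and the role of the rearrangement inequality of Theorem \ref{rear_ineq1}). Your outline reproduces exactly the architecture of those cited proofs, and the two reductions you do spell out are sound: the Minkowski-plus-induction tensorization correctly reduces the upper bound to $n=1$ (using that the $L^p(\R^{n-1})$ norm of $y'\mapsto\|f(\cdot,y')\|_{L^p(\R)}$ equals $\|f\|_{L^p(\R^n)}$), and the Riesz--Sobolev reduction to symmetric-decreasing triples in the trilinear form is the standard first step. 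One small imprecision in your first stage: an arbitrary pair of Gaussians $e^{-a|x|^2}$, $e^{-b|x|^2}$ does not saturate \eqref{optimal_1}; equality requires the specific coupling $a/b=p'/q'$ noted in the paper's first comment, and for other choices the ratio is strictly smaller, so the computation shows sharpness only after optimizing over $a,b$.

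The genuine gap is that the one-dimensional sharp inequality --- which is the entire content of the theorem, since everything else is routine --- is described but not proved. You acknowledge this yourself: the existence of an extremizer among symmetric-decreasing functions is already delicate (the functional is invariant under dilations, so compactness fails without a concentration argument), and the rigidity step identifying extremizers as Gaussians is precisely where Brascamp--Lieb's iterated rescaling or Beckner's two-point-inequality/central-limit route does all the work. As submitted, your argument proves Theorem \ref{Young} with an unspecified constant $C$ and then cites the literature for $C=A_pA_q/A_r$; that is a legitimate reading of a survey statement, but it is not a proof of \eqref{optimal_1}. If a self-contained argument is wanted, the shortest modern route is Barthe's proof via mass transport (cited in the paper as \cite{Bar}), which avoids both the existence and the rigidity issues.
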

\vskip 4pt
We shall give some comments.

1. The constant in  (\ref{optimal_1}) is optimal.  It is easy to
check that when $n=1$ and $p,q\not=1,$ there is equality in
(\ref{optimal_1}) for $f(x)=\operatorname{exp}(-p'x^2)$ and
$g(x)=\operatorname{exp}(-q'x^2)$.

\vskip 4pt

2. For the convolution norm we have
$$
\mathcal{C}=\sup \frac{||f\ast g||_r}{||f||_p||g||_q}=\sup
\frac{||f\ast g\ast h||_\infty}{||f||_p||g||_q||h||_{r'}}.
$$

\vskip 4pt

3. The convolution norm for  the dimension $n$ is $C^n$, where $C$
is the one-dimensional norm.

\vskip 4pt

4. Convolution takes radial functions to radial functions (since
it is true for the Fourier transform).

\vskip 4pt

5. The following {\it rearrangement inequality} was used in the
proof of Theorem \ref{Optimal_Young}. This inequality was derived
from the Riesz-Sobolev Theorem \ref{rear_ineq}.

\begin{teo}\label{rear_ineq1}
Let $h,\, f_1,...,f_m$ be  nonnegative functions in $S_0(\R^n).$
Then
$$
\int_{\R_n}h(x)(f_1\ast\cdots\ast f_m)(x)dx\le
\int_{\R_n}h^*_s(x)((f_1)^*_s\ast\cdots\ast (f_m)^*_s(x)dx.
$$
\end{teo}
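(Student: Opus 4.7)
My plan is to induct on $m$, with the base case $m=2$ supplied by Theorem \ref{rear_ineq} (the case $m=1$ reduces to the Hardy--Littlewood inequality). Throughout I write $G:=f_2\ast\cdots\ast f_m$ and use the standard fact, itself a consequence of Theorem \ref{rear_ineq}, that the convolution of two spherically symmetric decreasing functions is again spherically symmetric decreasing.

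Assuming the inequality for $m-1$ factors, I first apply Theorem \ref{rear_ineq} to the two-factor decomposition $f_1\ast\cdots\ast f_m=f_1\ast G$:
\begin{equation*}
\int_{\R^n} h(x)(f_1\ast\cdots\ast f_m)(x)\, dx \le \int_{\R^n} h^*_s(x)\bigl((f_1)^*_s\ast G^*_s\bigr)(x)\, dx.
\end{equation*}
The remaining task is to replace $G^*_s$ on the right by $(f_2)^*_s\ast\cdots\ast (f_m)^*_s$. Using Fubini together with the evenness of $(f_1)^*_s$, the right-hand side equals $\int_{\R^n} G^*_s(y)\Phi(y)\,dy$, where $\Phi:=h^*_s\ast (f_1)^*_s$ is itself spherically symmetric decreasing.

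The core of the argument is then to prove that, for every spherically symmetric decreasing $\Phi$,
\begin{equation*}
\int_{\R^n}\Phi(y) G^*_s(y)\, dy \le \int_{\R^n}\Phi(y)\bigl((f_2)^*_s\ast\cdots\ast (f_m)^*_s\bigr)(y)\, dy.
\end{equation*}
Decomposing $\Phi$ via the layer-cake formula as an integral of indicators of its superlevel sets $\{\Phi>s\}$, which are balls centered at the origin, this reduces to the submajorization inequality
\begin{equation*}
\int_0^T G^*(t)\, dt \le \int_0^T \bigl((f_2)^*_s\ast\cdots\ast (f_m)^*_s\bigr)^*(t)\, dt \qquad (T>0).
\end{equation*}
Here the inductive hypothesis enters: by (\ref{sup}), the left-hand side equals $\sup_{|E|=T}\int \chi_E G\, dx$, and the inductive hypothesis for $m-1$ factors applied with test function $\chi_E$ bounds each $\int \chi_E G\, dx$ by $\int_{B_T}((f_2)^*_s\ast\cdots\ast (f_m)^*_s)\, dx=\int_0^T ((f_2)^*_s\ast\cdots\ast (f_m)^*_s)^*(t)\, dt$, and taking the supremum over $|E|=T$ yields the claim.

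The main obstacle is precisely this final submajorization step: the two spherically symmetric decreasing functions $G^*_s$ and $(f_2)^*_s\ast\cdots\ast (f_m)^*_s$ share the same total mass $\prod_{i\ge 2}\|f_i\|_1$ but are not pointwise comparable in general, so the comparison must be extracted indirectly from cumulative integrals over balls via (\ref{sup}) and the inductive hypothesis, after which the layer-cake decomposition transfers everything back to the integral against $\Phi$.
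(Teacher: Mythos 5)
The paper states this theorem without proof, remarking only that it is derived from the Riesz--Sobolev inequality (Theorem \ref{rear_ineq}); your induction carries out exactly such a derivation and is correct: the three-function case handles $h,\,f_1,\,G$ with $G=f_2\ast\cdots\ast f_m$, and testing the inductive hypothesis against characteristic functions $\chi_E$, combined with (\ref{sup}) and the layer-cake decomposition of the spherically symmetric decreasing function $h^*_s\ast(f_1)^*_s$, legitimately upgrades $G^*_s$ to $(f_2)^*_s\ast\cdots\ast(f_m)^*_s$. The only point worth adding is a routine approximation (bounded, compactly supported $f_i$, then monotone convergence) to ensure the intermediate convolution $G$ lies in $S_0(\R^n)$ so that $G^*_s$ and $G^*$ are defined.
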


\vskip 8pt

\centerline{\bf Rearrangement estimate for convolutions}

\vskip 8pt

The following theorem was proved by O'Neil \cite{Neil}.
\begin{teo}\label{ONeil} Let $f,g\in S_0(\mathbb{R}^n)$ be nonnegative functions and $h=f\ast g$. Then for all $t>0$
\begin{eqnarray}
\label{oneil}
h^{**}(t)\leq tf^{**}(t)g^{**}(t)+\int_t^{\infty}f^*(u)g^*(u)du.
\end{eqnarray}
\end{teo}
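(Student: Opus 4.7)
The plan is to split $f$ and $g$ into low and high parts at the levels $f^*(t)$ and $g^*(t)$, expand $h = f*g$ into four summands, and apply a different elementary convolution estimate to each. Concretely, I set
$$f^{(t)}(x) = \min(f(x), f^*(t)), \qquad f_{(t)}(x) = (f(x) - f^*(t))_+,$$
and define $g^{(t)}, g_{(t)}$ analogously. A quick distribution-function computation gives
$$(f^{(t)})^*(u) = \min(f^*(u), f^*(t)), \qquad (f_{(t)})^*(u) = (f^*(u) - f^*(t))_+,$$
so that $\|f^{(t)}\|_\infty \leq f^*(t)$ and $\|f_{(t)}\|_1 = t(f^{**}(t) - f^*(t))$, with symmetric identities for $g$.

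Writing $h = f^{(t)}*g^{(t)} + f^{(t)}*g_{(t)} + f_{(t)}*g^{(t)} + f_{(t)}*g_{(t)}$ and invoking the subadditivity of $\varphi \mapsto \varphi^{**}$ recorded earlier, I would bound each of the four pieces at the point $t$ with a different inequality. For the low--low piece, the Hardy--Littlewood inequality yields the pointwise bound
$$f^{(t)}*g^{(t)}(x) \leq \int_0^\infty (f^{(t)})^*(u)(g^{(t)})^*(u)\,du = tf^*(t)g^*(t) + \int_t^\infty f^*(u)g^*(u)\,du,$$
hence the same bound for $(f^{(t)}*g^{(t)})^{**}(t)$. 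For each mixed piece, the trivial bound $\|u*v\|_\infty \leq \|u\|_\infty\|v\|_1$ gives $(f^{(t)}*g_{(t)})^{**}(t) \leq f^*(t)\cdot t(g^{**}(t)-g^*(t))$ and symmetrically $(f_{(t)}*g^{(t)})^{**}(t) \leq g^*(t)\cdot t(f^{**}(t)-f^*(t))$. For the high--high piece, Young's inequality at $p=q=r=1$ gives $\|f_{(t)}*g_{(t)}\|_1 \leq \|f_{(t)}\|_1\|g_{(t)}\|_1$, whence $(f_{(t)}*g_{(t)})^{**}(t) \leq \|f_{(t)}*g_{(t)}\|_1/t \leq t(f^{**}(t)-f^*(t))(g^{**}(t)-g^*(t))$.

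Summing the four bounds, the $t$-multiples collapse via the algebraic identity
$$(a-a')(b-b') + a'(b-b') + b'(a-a') + a'b' = ab$$
applied with $a = f^{**}(t)$, $a' = f^*(t)$, $b = g^{**}(t)$, $b' = g^*(t)$, leaving precisely $tf^{**}(t)g^{**}(t)$ together with the tail integral $\int_t^\infty f^*(u)g^*(u)\,du$ from the low--low piece, which is (\ref{oneil}). I expect the main obstacle to be the bookkeeping: one must match each summand with the right estimate ($\|\cdot\|_\infty$ for three summands, $\|\cdot\|_1/t$ for the high--high one) so that the extra $f^*(t)$ and $g^*(t)$ terms cancel exactly in the sum, and so that no bound refers to an $L^p$-norm that could be infinite for the given piece (the high--high summand need not be in $L^\infty$, and the low--low summand need not be in $L^1$).
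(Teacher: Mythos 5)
Your argument is correct, and it is genuinely different from the proof in the paper. The paper splits only $f$, into $f_1=(f-f^*(t))\chi_{E_t}$ supported on a set $E_t$ of measure exactly $t$ with $\{f>f^*(t)\}\subset E_t\subset\{f\ge f^*(t)\}$, and $f_2=f-f_1$; it then bounds $\frac1t\int_A(f_i*g)$ directly for every set $A$ of measure $t$, using the identity (\ref{sup}) (so that $\int_Ag(x-y)\,dx\le tg^{**}(t)$) for the high part and the Hardy--Littlewood inequality for the low part, and reads off $h^{**}(t)$ from (\ref{sup}). You instead truncate both $f$ and $g$ at height $f^*(t)$, resp.\ $g^*(t)$, expand the convolution into four bilinear pieces, and hit each with the cheapest available bound ($L^\infty$--$L^1$ for three of them, $L^1$--$L^1$ for the high--high piece, Hardy--Littlewood only for the low--low piece), finishing with subadditivity of $\varphi\mapsto\varphi^{**}$ and the algebraic cancellation you display. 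Each step checks out: the rearrangement identities for $\min(f,f^*(t))$ and $(f-f^*(t))_+$ are standard, $\|f_{(t)}\|_1=t(f^{**}(t)-f^*(t))$ is exactly right (the possible plateau of $f^*$ at level $f^*(t)$ contributes nothing to the integral, which is why your truncation avoids the measure-theoretic selection of $E_t$ that the paper needs), and the estimates $\varphi^{**}(t)\le\|\varphi\|_\infty$ and $\varphi^{**}(t)\le t^{-1}\|\varphi\|_1$ are what make the four bounds commensurable. Your version is more symmetric and modular at the cost of more bookkeeping; the paper's is shorter because the asymmetric splitting lets the single term $-t^2f^*(t)g^{**}(t)$ cancel between just two summands. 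One small point worth stating explicitly: if $f^{**}(t)$, $g^{**}(t)$ or the tail integral is infinite the inequality is vacuous, so you may assume all these quantities finite, which also guarantees that each of your four convolutions is finite almost everywhere.
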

\begin{proof}
Fix $t>0$ and choose a measurable set $E_t$ of measure $|E_t|=t$ such that
$$
\{x\in\mathbb{R}^n:f(x)>f^*(t)\}\subset E_t\subset\{x\in\mathbb{R}^n:f(x)\geq f^*(t)\}.
$$
Let $f_1(x)=(f(x)-f^*(t))\chi_{E_t}(x)$ and $f_2(x)=f(x)-f_1(x)$. For any measurable set $A\subset\mathbb{R}^n$ of measure $|A|=t$ we have
\begin{eqnarray}
\nonumber
\int_A(f_1*g)(x)dx&=&\int_Adx\int_{\mathbb{R}^n}f_1(x-y)g(y)dy\\
\nonumber
&=&\int_{\mathbb{R}^n}f_1(y)\left(\int_A g(x-y)dx\right)dy\\
\nonumber
&\leq&tg^{**}(t)\int_{E_t}f(y)dy-t^2f^*(t)g^{**}(t)\\
\nonumber
&\leq&t^2f^{**}(t)g^{**}(t)-t^2f^*(t)g^{**}(t).
\end{eqnarray}
Notice that $f_2(x)=f(x)$ if $x\notin E_t$ and $f_2(x)=f^*(t)$ for $x\in E_t$. Clearly $f_2^*(u)=f^*(t)$ for $0<u<t$ and $f_2^*(u)=f^*(u)$ for $u>t$. Thus
\begin{eqnarray}
\nonumber
\int_A(f_2*g)(x)dx&=&\int_Adx\int_{\mathbb{R}^n}f_2(x-y)g(y)dy\\
\nonumber
&\leq&\int_A\left(\int_0^\infty f^*_2(u)g^*(u)du\right)dx\\
\nonumber
&=&\int_A\left(f^*(t)\int_0^t g^*(u)du+\int_t^\infty f^*(u)g^*(u)du\right)dx\\
\nonumber
&=&t^2f^*(t)g^{**}(t)+t\int_0^\infty f^*(u)g^*(u)du.
\end{eqnarray}
It follows that
\begin{eqnarray}
\nonumber
\frac{1}{t}\int_A(f*g)(x)dx&\leq& tf^{**}(t)g^{**}(t)+\int_t^\infty f^*(u)g^*(u)du
\end{eqnarray}
for any measurable set $A$ with measure $|A|=t$, which completes the proof.
\end{proof}

\vskip 8pt

\centerline{\bf Convolution inequalities in Lorentz spaces}

\vskip 8pt

The following theorem was proved by O'Neil \cite{Neil} in 1963
(and later by Hunt \cite{Hunt} in 1966).
\begin{teo}\label{Conv} Let
$$
\frac1p+\frac1q=1+\frac1r\quad (1<p,q,r<\infty)
$$
and
$$
\frac1\mu+\frac1\nu=\frac1s\quad (0<\mu, \nu, s\le\infty).
$$
Then
$$
||f\ast g||_{r,s}\le c||f||_{p,\mu}||g||_{q,\nu}.
$$
\end{teo}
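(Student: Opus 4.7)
The plan is to reduce the norm inequality on $h = f\ast g$ to O'Neil's pointwise estimate (Theorem \ref{ONeil}). Since $r>1$, an application of the Hardy-type Lemmas \ref{hardy} and \ref{hardy-type} to the non-increasing function $h^*$ gives the equivalence (with constant depending only on $r$) $\|h\|_{r,s} \asymp \|t^{1/r} h^{**}(t)\|_{L^s(dt/t)}$, so by Theorem \ref{ONeil} it suffices to control
$$
\|t^{1/r} I_1\|_{L^s(dt/t)} + \|t^{1/r} I_2\|_{L^s(dt/t)}, \qquad I_1(t) = tf^{**}(t)g^{**}(t),\quad I_2(t) = \int_t^\infty f^*(u) g^*(u)\,du.
$$

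The first term I would handle immediately: since $1/p + 1/q = 1 + 1/r$, one has $t^{1/r} I_1(t) = (t^{1/p} f^{**}(t))(t^{1/q} g^{**}(t))$, and the generalized H\"older inequality on $L^s(dt/t)$ (valid since $\mu,\nu\ge s$ with $1/\mu+1/\nu = 1/s$) gives
$$
\|t^{1/r} I_1\|_{L^s(dt/t)} \le \|t^{1/p} f^{**}\|_{L^\mu(dt/t)} \,\|t^{1/q} g^{**}\|_{L^\nu(dt/t)}.
$$
Because $p,q>1$, a further application of Lemma \ref{hardy} (for $\mu,\nu \ge 1$) or Lemma \ref{hardy-type} (for $\mu,\nu < 1$, using that $f^*,g^*$ are quasi-decreasing) replaces $f^{**},g^{**}$ by $f^*,g^*$, yielding the bound $c\|f\|_{p,\mu}\|g\|_{q,\nu}$.

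For the second term I would establish a Hardy-type estimate
$$
\int_0^\infty I_2(t)^s\, t^{s/r-1}\,dt \le C \int_0^\infty (t f^*(t) g^*(t))^s t^{s/r-1}\,dt = C\int_0^\infty (t^{1/p}f^*(t))^s (t^{1/q}g^*(t))^s \frac{dt}{t},
$$
after which H\"older again delivers $C\|f\|_{p,\mu}^s\|g\|_{q,\nu}^s$. For $s \ge 1$ this Hardy step is just the second inequality of Lemma \ref{hardy} with $\alpha = s/r > 0$ applied to the non-increasing function $\phi = f^*g^*$. The main obstacle is the range $0 < s < 1$, where the classical Hardy inequality fails; I would treat it by a dyadic decomposition, using the monotonicity of $\phi$ and the elementary inequality $(a+b)^s \le a^s + b^s$ to get
$$
I_2(t)^s \le \sum_{k\ge 0}\Bigl(\int_{2^k t}^{2^{k+1}t}\phi(u)\,du\Bigr)^s \le \sum_{k\ge 0}\bigl(2^k t\,\phi(2^k t)\bigr)^s,
$$
and then, after the substitution $w = 2^k t$ in the integral $\int_0^\infty (\cdot)^s t^{s/r-1}\,dt$, summing the resulting geometric factor $\sum_k 2^{-ks/r} < \infty$ (convergent since $s/r>0$) to produce the claimed bound. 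Combining the estimates for $I_1$ and $I_2$ completes the proof.
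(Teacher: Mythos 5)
Your proposal is correct and follows exactly the route the paper indicates (which it leaves as a one-sentence sketch): O'Neil's rearrangement inequality for $h^{**}$, H\"older in $L^s(dt/t)$ to split the two factors, and Hardy-type inequalities to pass between $f^*$ and $f^{**}$ and to absorb the tail integral. Your dyadic argument for the tail term when $0<s<1$ correctly supplies the one case not literally covered by Lemmas \ref{hardy} and \ref{hardy-type} (which treat $\int_0^u$ rather than $\int_u^\infty$ in that range), and the remaining endpoint cases $\mu,\nu,s=\infty$ are routine sup estimates.
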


\vskip 4pt

If $s=r,$ we can take $\mu=p(r+1),  \nu=q(r+1)$, and we obtain a stronger inequality, than Young's inequality.

However, the sharp constants in the case of Lorentz norms are unknown.

\vskip 4pt

To prove Theorem \ref{Conv}, it is sufficient to apply the
rearrangement inequality for convolutions and Hardy-type
inequalities.

\vskip 4pt

\vskip 4pt
\section{Fractional integrals}
\vskip 4pt

Let $f$ be a $2\pi-$periodic integrable on $[0,2\pi]$ function with the Fourier series
\begin{equation}\label{FI_1}
f(x)\sim \sum_{n\in \Z}c_n e^{inx}.
\end{equation}
Assume that $c_0=0.$ Set $F(x)=\int_0^x f(t)dt.$ Then $F(0)=F(2\pi)=0$
 and
 $$
 F(x)=\sum_{n\in \Z, n\not=0} \frac{c_n}{in}e^{inx}
 $$
 (the convergence is uniform).

 Let now $\a>0$. Set
 $$
 (in)^\a=|n|^\a\operatorname{exp}\left(\frac12 \pi i\a \operatorname{sign}n\right) \quad (n\not=0)
$$
and consider the series
\begin{equation}\label{FI_2}
\sum_{n\in \Z, n\not=0} \frac{c_n}{(in)^\a}e^{inx}.
\end{equation}
If $\a\in \N,$ then the series (\ref{FI_2}) is obtained by
integration $\a$ times of the series (\ref{FI_1}). It can be shown
(see \cite[Ch. 12]{Z}), that for any $\a>0$ the series
(\ref{FI_2}) converges almost everywhere, it is the Fourier series
of its sum $f_\a$, and satisfies the equality
$$
f_\a(x)=\frac1{\Gamma(\a)}\int_{-\infty}^x f(t)(x-t)^{\a-1}\,dt.
$$
The function $f_\a$ is called {\it the integral of order $\a$} of the function $f$ (Weyl).

\vskip 4pt

The following theorem was proved by Hardy and Littlewood
\cite{HL1}.
\begin{teo}\label{HL_1} Let $f\in L^p[0,2\pi]$ $(1<p<\infty)$. Assume that
$$
\int_0^{2\pi} f(x)dx=0.
$$
Let $0<\a<1/p,\,\,\, p^*=p/(1-\a p).$ Then $f_\a\in L^{p^*}[0,2\pi])$ and
$$
||f_\a||_{p^*}\le c ||f||_p.
$$
\end{teo}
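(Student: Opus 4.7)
The plan is to realize the Weyl fractional integral as a convolution on the torus with a kernel that lies in a weak Lorentz space, and then apply the O'Neil--Hunt convolution inequality in Lorentz spaces (Theorem~\ref{Conv}).

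First, by splitting the half--line $(-\infty,x]$ into intervals of length $2\pi$ and using periodicity together with the hypothesis $\int_0^{2\pi} f=0$, one rewrites $f_\a = f\ast K_\a$, where $K_\a$ is the $2\pi$--periodic kernel whose Fourier coefficients are $(in)^{-\a}$ for $n\neq 0$ and $0$ for $n=0$. The key fact about $K_\a$ is the classical pointwise estimate $|K_\a(x)|\le C_\a|x|^{\a-1}$ on $(-\pi,\pi)\setminus\{0\}$. This immediately yields $K_\a^*(t)\le C_\a t^{\a-1}$ on $(0,2\pi)$, so that
$$
\sup_{t>0}\, t^{1-\a}K_\a^*(t)<\infty,
$$
i.e.\ $K_\a\in L^{q,\infty}$ with $q=1/(1-\a)$.

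Next, I apply Theorem~\ref{Conv} with target index $r=p^*$, primary indices $(p,q)$, and secondary indices $\mu=p$, $\nu=\infty$, which forces $s=p$. The scaling hypotheses are verified by
$$
\frac{1}{p}+\frac{1}{q}=\frac{1}{p}+(1-\a)=1+\frac{1-\a p}{p}=1+\frac{1}{p^*}
$$
and $1/\mu+1/\nu=1/p=1/s$, with all indices in the admissible range since $1<p<p^*<\infty$ (because $0<\a<1/p$). Theorem~\ref{Conv} then gives
$$
\|f_\a\|_{L^{p^*,p}}\le c\,\|f\|_{L^{p,p}}\,\|K_\a\|_{L^{q,\infty}}\le C_\a\|f\|_p.
$$
Finally, since $p<p^*$, the embedding $L^{p^*,p}\subset L^{p^*,p^*}=L^{p^*}$ recorded earlier in the text concludes the proof.

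The main obstacle is the preliminary analysis of the kernel $K_\a$: one must verify both that the Weyl integral $f_\a$ really equals the periodic convolution $f\ast K_\a$ (the improper integral defining $f_\a$ is only conditionally convergent) and that $|K_\a(x)|\le C_\a|x|^{\a-1}$ near the origin. This is standard periodic--kernel analysis (cf.\ \cite[Ch.~12]{Z}) and relies crucially on the mean--zero hypothesis $c_0=0$, without which the formal periodization of $|x|^{\a-1}$ would diverge. Once these facts are in hand, the proof is essentially an automatic application of the Lorentz convolution bound, which is the natural machinery for this kind of sharp fractional integration estimate.
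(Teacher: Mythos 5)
Your argument is correct, but note that the paper does not actually prove Theorem \ref{HL_1}: it is quoted as a classical result of Hardy and Littlewood, and the only fractional-integration theorem proved in the text is the $\R^n$ analogue (Theorem \ref{HLS}), which is handled by Hedberg's pointwise bound $I_\a f(x)\le A\|f\|_p^{\a p/n}Mf(x)^{1-\a p/n}$ together with the maximal theorem. Your route is instead the O'Neil route: periodize the Weyl kernel, observe $K_\a\in L^{1/(1-\a),\infty}$ from $|K_\a(x)|\le C|x|^{\a-1}$, and invoke the Lorentz convolution inequality (Theorem \ref{Conv}) with $\nu=\infty$, $\mu=s=p$. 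This is exactly the mechanism the paper uses (in sketch form) for the $\R^n$ refinement in Theorem \ref{ONeil_2}, and it buys you more than the statement asks for, namely $\|f_\a\|_{p^*,p}\le c\|f\|_p$ before descending to $L^{p^*}$ via $L^{p^*,p}\subset L^{p^*}$; Hedberg's method is more elementary but yields only the $L^{p^*}$ bound. Two points deserve a sentence in a full write-up. First, Theorem \ref{Conv} is stated for functions on $\R^n$, while you apply it on the circle; this is harmless because O'Neil's rearrangement estimate (Theorem \ref{ONeil}) and the ensuing Hardy-inequality argument work verbatim for convolution on $\mathbb{T}$, but it should be said. Second, the kernel facts you defer to Zygmund are genuinely where the mean-zero hypothesis enters: the periodization $\sum_{k\ge 0}(s+2\pi k)^{\a-1}$ diverges, and one must subtract $(2\pi k)^{\a-1}$ inside each summand (legitimate precisely because $\int_0^{2\pi}f=0$) to obtain $K_\a(s)=s^{\a-1}/\Gamma(\a)+O(1)$ on $(0,2\pi)$. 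With those two remarks supplied, the proof is complete and correct.
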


\vskip 4pt

Note that this theorem is not true for $p=1, ~~ 0<\a<1.$ Indeed,
let
$$
f(x)=\frac1x\left(\ln(\frac{\pi}{x})\right)^{\a-2}, ~0< x\le\pi,
~~ f(x)=0\quad\mbox{for}\quad \pi<x\le 2\pi,
$$
and let $f$ be extended periodically  with the period $2\pi$ to
the whole real line. We have $f\in L^1[0,2\pi].$ On the other
hand,
$$
f_\a(x)\ge \frac1{\Gamma(\a)}\int_0^x(x-t)^{\a-1}f(t)dt\ge
\frac{c}{x^{1-\a}}\left(\ln(\frac{\pi}{x})\right)^{\a-1}, ~0<
x\le\pi
$$
$(c>0),$ and thus $f_\a\not\in L^{1/(1-\a)}[0, 2\pi].$

\vskip 4pt

Theorem \ref{HL_1} is one of those results which formed the basis
for the development of the general Embedding Theory of function
spaces. In 1938 Sobolev \cite{Sob1} (see also \cite{Sob2})
extended this theorem to the Riesz potentials of functions of
several variables and obtained embedding theorems  for the spaces
$W_p^r$ he introduced.

\vskip 8pt
\centerline{\bf Riesz potentials}
\vskip 8pt
Let $n\in\N$ and $0<\a<n.$ The Riesz kernel in $\R^n$ is defined by
$$
K_\a(x)=\frac{|x|^{\a-n}}{\g_n(\a)}, \quad \g_n(\a)=\frac{\pi^{n/2}2^\a\Gamma(\a/2)}{\Gamma((n-\a)/2)}.
$$
The Riesz potential of a function $f$ is defined as the convolution
$$
I_\a f(x)=K_\a\ast f(x)=\frac{1}{\g_n(\a)}\int_{\R^n} |x-y|^{\a-n}f(y)dy.
$$
The coefficient $1/\g_n(\a)$ is determined by the equation
$$
\widehat{I_\a f}(\xi)=(2\pi|\xi|)^{-\a}\widehat{f}(\xi), \quad
\xi\in \R^n,  \xi\not= 0.
$$
This equality is understood in the sense of distributions, that is
$$
\int_{R^n}I_\a f(x)\overline {g(x)}
dx=\int_{R^n}(2\pi|\xi|)^{-\a}\widehat{f}(\xi)\overline{\widehat{g}(\xi)}d\xi,
$$
whenever $f$ and $g$ belong to the Schwartz class $S.$

The value of the coefficient $1/\g_n(\a)$ is also important for
the validity of the Riesz composition formula
$$
I_\a\ast I_\b=I_{\a+\b}, \quad \a>0, \,\, \b>0,\,\, \a+\b<n.
$$

Assume that $0<\a<n,~~ 1\le p<n/\a,$ and $f\in L^p(\R^n)$. It is easy to show that the  integral defining  $I_\a f(x)$ converges absolutely
for almost all $x\in \R^n$.

The following Hardy-Littlewood-Sobolev theorem \cite{HL1}, \cite{Sob1} gives an extension
of Theorem \ref{HL_1}.
\begin{teo}\label{HLS} Let $n\in \N,\,\,\,0<\a<n, \,\,\,1<p<n/\a,$ and $p^*=np/(n-\a p)$. If
$f\in L^p(\R^n)$, then
$$
||I_\a f||_{p^*}\le c ||f||_p.
$$
\end{teo}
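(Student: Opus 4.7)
The plan is to combine O'Neil's rearrangement estimate for convolutions (Theorem~\ref{ONeil}) with the classical Hardy inequalities (Lemma~\ref{hardy}). The starting point is an explicit computation of the rearrangement of the Riesz kernel: since $K_\a(x)=|x|^{\a-n}/\g_n(\a)$ is radial and strictly decreasing in $|x|$, its distribution function is read off directly from the volume of a ball, which gives $K_\a^*(t)=c_1 t^{\a/n-1}$ for a constant $c_1=c_1(n,\a)$, and consequently $tK_\a^{**}(t)=c_2\, t^{\a/n}$.

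Setting $h=I_\a f=K_\a*f$ and applying Theorem~\ref{ONeil} yields
\begin{equation*}
h^{**}(t)\le c_2\,t^{\a/n}f^{**}(t)+c_1\int_t^{\infty}u^{\a/n-1}f^*(u)\,du.
\end{equation*}
Since $(I_\a f)^*(t)\le h^{**}(t)$ and $\|I_\a f\|_{p^*}=\|(I_\a f)^*\|_{L^{p^*}(0,\infty)}$, it suffices to bound each of the two terms on the right in $L^{p^*}(0,\infty)$. For the first term I would rewrite $t^{\a/n}f^{**}(t)=t^{\a/n-1}\int_0^t f^*(u)\,du$ and apply the first inequality of Lemma~\ref{hardy} with parameter $\alpha=p^*/p'$, which is positive because $p>1$. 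For the second term I would apply the second inequality of Lemma~\ref{hardy} to $\f(u)=u^{\a/n-1}f^*(u)$ with $\alpha=1$. Using the defining relation $1/p^*=1/p-\a/n$, a short computation shows that both Hardy bounds reduce to the same weighted integral $\int_0^\infty t^{p^*/p-1}f^*(t)^{p^*}\,dt$. To close the argument I would invoke the elementary bound $f^*(t)\le t^{-1/p}\|f\|_p$ (immediate from $t f^*(t)^p\le\|f\|_p^p$): since $p^*>p$, this dominates $t^{p^*/p-1}f^*(t)^{p^*-p}$ by $\|f\|_p^{p^*-p}$ (the powers of $t$ cancel identically), leaving $\int_0^\infty f^*(t)^p\,dt=\|f\|_p^p$ and thus $\|f\|_p^{p^*}$ in total.

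The main obstacle is not conceptual but rather the bookkeeping of exponents: one has to verify that both applications of Hardy's inequality are legitimate (that is, each $\alpha$ is strictly positive) and that the two weighted integrals produced on the right genuinely coincide, so that they collapse into a single estimate by $\|f\|_p$. Once these exponent identities are checked, the proof is a direct assembly of results already available in the excerpt.
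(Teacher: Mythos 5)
Your argument is correct, but it is not the route the paper takes for this theorem. The paper proves Theorem \ref{HLS} by Hedberg's method: Lemma \ref{Hedberg_1} splits the potential at radius $\d$, Proposition \ref{Hedberg_2} optimizes $\d$ to get the pointwise bound $I_\a f(x)\le A\|f\|_p^{\a p/n}Mf(x)^{1-\a p/n}$, and the Hardy--Littlewood maximal theorem finishes. Your route --- computing $K_\a^*(t)=c_1t^{\a/n-1}$, feeding it into O'Neil's inequality, and then applying the two Hardy inequalities of Lemma \ref{hardy} with the exponents $\alpha=p^*/p'$ and $\alpha=1$ --- is instead exactly the argument the paper sketches later for Theorem \ref{ONeil_2}, the Lorentz-space refinement $\|I_\a f\|_{p^*,p}\le c\|f\|_p$. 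All of your exponent bookkeeping checks out: both Hardy applications do collapse to $\int_0^\infty t^{p^*/p-1}f^*(t)^{p^*}\,dt=\|f\|_{p,p^*}^{p^*}$, and your elementary use of $f^*(t)\le t^{-1/p}\|f\|_p$ is a direct proof of the embedding $L^p\subset L^{p,p^*}$ that closes the estimate (one small housekeeping point: O'Neil's theorem is stated for nonnegative functions, so you should pass to $|f|$ and use $|I_\a f|\le I_\a|f|$). What each approach buys: Hedberg's proof is shorter, avoids rearrangements entirely, and pivots on maximal-function technology that generalizes to weighted settings; your proof costs more exponent arithmetic but yields, with no extra work (apply Hardy with outer exponent $p$ instead of $p^*$), the strictly stronger conclusion $I_\a f\in L^{p^*,p}$, which the target $L^{p^*}$ statement then follows from by the embedding $L^{p^*,p}\subset L^{p^*}$.
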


\vskip 4pt
\begin{rem} The condition
$$
\frac1q=\frac1p-\frac{\a}{n} \quad \left(1<p<\frac{n}{\a}\right)
$$ is not only sufficient, but also necessary for the boundedness
of the operator $f\mapsto I_\a f$ from $L^p$ to $L^q$. It follows
easily by dilation arguments. Indeed, assume that for some $p,
q\in (1,\infty)$
\begin{equation}\label{Bound}
||I_\a f||_q\le A||f||_p
\end{equation}
for any $f\in L^p(\R^n).$ Let $f$ be a characteristic function of
the unit ball in $\R^n$. Set $f_\d(x)=f(\d x)$ for $\d>0.$ Then
$||f_\d||_p=\d^{-n/p}||f||_p.$ Further, $I_\a f_\d(x)= \d^{-\a}
I_\a f(\d x)$ and $||I_\a f_\d||_q=\d^{-\a-n/q} ||I_\a f||_q.$
Thus, by (\ref{Bound}),
$$
\d^{-\a-n/q} ||I_\a f||_q\le A\d^{-n/p}||f||_p.
$$
It follows that $\d^{n/p-n/q-\a}\le C$ for any $\d>0$. It is
possible if and only if $n/p-n/q-\a=0.$
\end{rem}

\vskip 4pt
\begin{rem} Theorem fails to hold for $p=1$ and $p=n/\a.$
\end{rem}
\vskip 4pt
\begin{rem} The classical Sobolev theorem  states that for $n\ge2,$ $1\le p< n$
$$
||f|_{p^*}\le c||\nabla f||_p,\quad p^*=\frac{np}{n-p}.
$$
We stress that, in contrast to Theorem \ref{HLS}, the latter  inequality is true for $p=1$, too.

\end{rem}

\vskip 8pt

\centerline{\bf Hedberg's approach}
\vskip 8pt

Given any locally integrable function $f$ on $\R^n$, denote by $Mf$ the Hardy -- Littlewood maximal function of $f$ defined by
$$
Mf(x)=\sup_{r>0}\frac{1}{|B(x,r)|}\int_{B(x,r)}|f(y)|dy,
$$
where $B(x,r)$ stands for the ball centered at $x$ and having radius $r.$

By the Hardy -- Littlewood maximal theorem,
$$
||Mf||_p\le A_p||f||_p, 1<p\le \infty.
$$
Moreover, $(Mf)^*(t)\asymp f^{**}(t)$ (F. Riesz, C. Herz; see \cite[p. 122]{BS}).

\vskip 4pt

Hedberg \cite{Hed} in 1972 gave a short and elegant proof of
Theorem \ref{HLS} basing on the following lemma.

\begin{lem}\label{Hedberg_1}
If $0<\a<n,$ then for all $x\in\R^n$ and any $\d>0,$
$$
\int_{|x-y|\le \d} |f(y)||x-y|^{\a-n}\,dy\le A\d^\a Mf(x).
$$
\end{lem}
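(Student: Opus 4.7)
The plan is to perform a dyadic decomposition of the ball $B(x,\delta)$ into shrinking annuli on which the kernel $|x-y|^{\alpha-n}$ can be controlled pointwise by its value on the inner boundary. Write
$$
B(x,\delta) = \bigcup_{k=0}^{\infty} A_k, \qquad A_k = \{y : 2^{-(k+1)}\delta < |x-y| \le 2^{-k}\delta\},
$$
and estimate the contribution of each $A_k$ separately. Since $0 < \alpha < n$, the exponent $\alpha - n$ is negative, so on $A_k$ we have the pointwise bound
$$
|x-y|^{\alpha-n} \le \bigl(2^{-(k+1)}\delta\bigr)^{\alpha-n}.
$$

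Next I would pull this constant out of the integral and dominate the remaining $L^1$ integral by the maximal function. Since $A_k \subset B(x, 2^{-k}\delta)$, the definition of $Mf$ gives
$$
\int_{A_k} |f(y)|\,dy \le \int_{B(x,2^{-k}\delta)} |f(y)|\,dy \le v_n (2^{-k}\delta)^n Mf(x).
$$
Combining the two estimates, the integral over $A_k$ is bounded by $v_n 2^{n-\alpha} 2^{-k\alpha}\delta^{\alpha} Mf(x)$. Summing in $k$,
$$
\int_{|x-y|\le \delta} |f(y)||x-y|^{\alpha-n}\,dy \le v_n 2^{n-\alpha} \delta^\alpha Mf(x) \sum_{k=0}^{\infty} 2^{-k\alpha},
$$
and the geometric series converges because $\alpha>0$, yielding the claim with $A = v_n 2^{n-\alpha}/(1-2^{-\alpha})$.

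There is no real obstacle here; the only delicate points are using $\alpha < n$ to ensure the kernel is monotone in $|x-y|$ (so the dyadic bound goes in the right direction), and using $\alpha > 0$ to guarantee convergence of the geometric series. Both hypotheses enter naturally in the estimate.
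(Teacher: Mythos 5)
Your proof is correct and follows exactly the paper's own argument: the same dyadic decomposition into annuli $2^{-(k+1)}\delta<|x-y|\le 2^{-k}\delta$, the same pointwise bound on the kernel, the same comparison of the annulus integral with the ball average defining $Mf(x)$, and the same geometric series using $\alpha>0$. The only difference is that you track the explicit constant, which the paper absorbs into $A$.
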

\begin{proof} We have
$$
\begin{aligned}
&\int_{|x-y|\le \d} |f(y)||x-y|^{\a-n}\,dy\\
&=\sum_{k=0}^\infty \int_{2^{-k-1}\d<|x-y|\le
2^{-k}\d}|f(y)||x-y|^{\a-n}\,dy\\
&\le A\sum_{k=0}^\infty (2^{-k}\d)^{\a-n}\int_{|x-y|\le
2^{-k}\d}|f(y)|\,dy\\
&\le A\d^\a Mf(x)\sum_{k=0}^\infty 2^{-k\a}=A'\d^\a Mf(x).
\end{aligned}
$$
\end{proof}

\vskip 4pt
\begin{prop}\label{Hedberg_2} For any $0<\a<n,$ $1\le p<n/\a$, and
any measurable function $f\ge 0$ for all $x\in\R^n$
\begin{equation}\label{hedberg}
I_\a f(x)\le A||f||_p^{\a p/n}Mf(x)^{1-\a p/n}.
\end{equation}
\end{prop}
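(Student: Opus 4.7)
The plan is to split the Riesz potential integral at a ball of radius $\delta>0$ to be chosen optimally, estimating the near part by the maximal function (via Lemma \ref{Hedberg_1}) and the far part by H\"older's inequality, and then balancing the two bounds.

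More precisely, I would write
$$
\gamma_n(\alpha) I_\alpha f(x) = \int_{|x-y|\le \delta} f(y)|x-y|^{\alpha-n}\,dy + \int_{|x-y|> \delta} f(y)|x-y|^{\alpha-n}\,dy.
$$
Lemma \ref{Hedberg_1} immediately bounds the first term by $A\delta^{\alpha} Mf(x)$. For the second term, for $1<p<n/\alpha$ apply H\"older's inequality to obtain
$$
\int_{|x-y|> \delta} f(y)|x-y|^{\alpha-n}\,dy \le \|f\|_p \left(\int_{|x-y|>\delta} |x-y|^{(\alpha-n)p'}\,dy\right)^{1/p'}.
$$
Passing to polar coordinates, the inner integral equals $c_n\int_\delta^\infty r^{(\alpha-n)p'+n-1}\,dr$; this converges precisely because $(n-\alpha)p'>n$, which is equivalent to the hypothesis $p<n/\alpha$, and it equals a constant times $\delta^{(\alpha-n)p'+n}$. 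Taking the $1/p'$-th power yields the clean bound $C\|f\|_p\,\delta^{\alpha-n/p}$. (The case $p=1$ is even easier: bound $|x-y|^{\alpha-n}\le \delta^{\alpha-n}$ pointwise on the far set and pull it out, obtaining the same form $\delta^{\alpha-n/p}\|f\|_p$ with $p=1$.)

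Putting the two estimates together gives, for every $\delta>0$,
$$
I_\alpha f(x) \le A_1\,\delta^{\alpha}\,Mf(x) + A_2\,\delta^{\alpha - n/p}\,\|f\|_p.
$$
Now I would minimize the right-hand side in $\delta$: since $\alpha-n/p<0<\alpha$, the two terms are, respectively, increasing and decreasing in $\delta$, so the optimum occurs when they are comparable, namely at
$$
\delta = \left(\frac{\|f\|_p}{Mf(x)}\right)^{p/n}
$$
(which may be assumed meaningful, as the inequality is trivial when $Mf(x)=0$ or $\|f\|_p=\infty$). Substituting back, both terms reduce to a constant times $\|f\|_p^{\alpha p/n}\,Mf(x)^{1-\alpha p/n}$, which is the desired bound (\ref{hedberg}).

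There is no real obstacle here beyond being careful with the convergence condition in the far-part integral; the only inequality in the argument is H\"older's, and the condition $p<n/\alpha$ is exactly what makes the tail integral finite. The rest is a one-parameter optimization tuned so that the scaling in $\delta$ matches the dimensional balance $\|f\|_p^{\alpha p/n}Mf(x)^{1-\alpha p/n}$ predicted by the homogeneity argument used in the earlier remark on necessity.
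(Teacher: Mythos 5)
Your proof is correct and is essentially identical to the paper's: the same near/far splitting at radius $\delta$, with Lemma \ref{Hedberg_1} for the near part, H\"older's inequality for the tail (where $p<n/\a$ guarantees convergence), and the same optimal choice $\d=(\|f\|_p/Mf(x))^{p/n}$. The only additions are your explicit polar-coordinate computation and the separate remark on $p=1$, both of which the paper leaves implicit.
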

\begin{proof} Let $\d>0$. By H\"older's inequality,
$$
\int_{|x-y|\ge \d} |f(y)||x-y|^{\a-n}\,dy\le A\d^{\a-n/p}||f||_p.
$$
By Lemma \ref{Hedberg_1},
$$
\int_{|x-y|\le \d} |f(y)||x-y|^{\a-n}\,dy\le A\d^\a Mf(x).
$$
Choosing
$$
\d = \left(\frac{||f||_p}{Mf(x)}\right)^{p/n},
$$
we obtain (\ref{hedberg}).
\end{proof}

\vskip 4pt {\bf Proof of Theorem \ref{HLS}.} Let $1/p-1/q=\a/n.$
Then by (\ref{hedberg})
$$
(I_\a f(x))^q\le A^q||f||_p^{\a pq/n}Mf(x)^p.
$$
Thus,
$$
||I_\a f||_q\le A||f||_p^{\a p/n}||Mf||_p^{p/q}.
$$
Since $p/q=1-\a p/n$ and $||Mf||_p\le A_p||f||_p,$ we obtain that
$$||I_\a f||_q\le A'||f||_p.$$

\vskip 8pt
\centerline{\bf Fractional maximal functions and Riesz potentials}
\vskip 8pt

For $0<\a<n,$ set
$$
M_\a f(x)=\sup_{r>0}\frac{1}{|B(x,r)|^{1-\a/n}}\int_{B(x,r)}|f(y)|dy.
$$

If $f\ge 0$, then $I_\a f$ is estimated pointwise by the fractional maximal function from below. Indeed,
$$
\begin{aligned}
\int_{\R^n} \frac{f(y)}{|x-y|^{n-\a}}dy&\ge \int_{B(x,r)}\frac{f(y)}{|x-y|^{n-\a}}dy\\
&\ge \frac1{r^{n-\a}}\int_{B(x,r)}|f(y)|dy
\end{aligned}
$$
for any $r>0$ and any $x\in\R^n.$ Thus,
$$
I_\a f(x)\ge c M_\a f(x) \quad (c>0).
$$

The opposite inequality is in general false (for example, take $f(y)=|y|^{-\a}$ and $x=0$).

Nevertheless, Muckenhoupt and Wheeden \cite{MuWh} (see also
\cite{AdHe})  proved the following theorem.
\begin{teo}
 Lat $1<p<\infty$ and $0<\a<n$. Then
$$
||I_\a f||_p\le c ||M_\a f||_p.
$$
\end{teo}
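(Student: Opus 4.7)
I would prove the theorem via the good-$\l$ inequality of Muckenhoupt and Wheeden. The target is: there exist absolute constants $C>0$ and $\d=n/(n-\a)$ such that for all sufficiently small $\g>0$,
\begin{equation*}
|\{x: I_\a f(x)>2\l,\ M_\a f(x)\le\g\l\}| \le C\g^{\d}\,|\{x: I_\a f(x)>\l\}|, \qquad \l>0.
\end{equation*}
Given this, multiplying by $p\l^{p-1}$, integrating over $\l\in(0,\infty)$, and using $|\{I_\a f>2\l\}|\le|\{I_\a f>2\l,\ M_\a f\le\g\l\}|+|\{M_\a f>\g\l\}|$ yields
$$\|I_\a f\|_p^p \le 2^p C\g^\d\|I_\a f\|_p^p+(2/\g)^p\|M_\a f\|_p^p.$$
Choosing $\g$ so that $2^p C\g^\d<1/2$ and absorbing the first term on the right (after a preliminary reduction to the case $\|I_\a f\|_p<\infty$ via truncation and density) gives $\|I_\a f\|_p\le c_p\|M_\a f\|_p$.

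For the good-$\l$ inequality, I would Whitney-decompose the open set $\Omega_\l:=\{I_\a f>\l\}$ (open by lower semi-continuity of $I_\a f$) into cubes $\{Q_j\}$. It suffices to show that $|E_Q|\le C\g^{n/(n-\a)}|Q|$ for each Whitney cube $Q$ of side $\ell$, where $E_Q:=\{x\in Q:I_\a f(x)>2\l,\ M_\a f(x)\le\g\l\}$. Assuming $E_Q\ne\emptyset$, I would pick a witness $x^*\in E_Q$ and a reference point $x_Q\notin\Omega_\l$ with $|x_Q-x^*|\le c_0\ell$, so $I_\a f(x_Q)\le\l$, and split $f=f_1+f_2$ with $f_1=f\chi_{B(x_Q,c_0\ell)}$. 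The far part is controlled by the Hedberg-type smoothness bound
$$|I_\a f_2(x)-I_\a f_2(x_Q)|\le C_1 M_\a f(x^*)\le C_1\g\l, \qquad x\in Q,$$
obtained by differencing the Riesz kernel via $|K_\a(u)-K_\a(v)|\le C|u-v||u|^{\a-n-1}$ (valid for $|u|\ge 2|u-v|$) and summing dyadically over annuli around $x_Q$. Hence $E_Q\subset\{x\in Q:I_\a f_1(x)>(1-C_1\g)\l\}$. For the near part, $\|f_1\|_1\le C(c_0\ell)^{n-\a}M_\a f(x^*)\le C\g\l\ell^{n-\a}$, and since $K_\a\in L^{n/(n-\a),\infty}(\R^n)$ (because $K_\a^*(t)=ct^{\a/n-1}$), the weak-type $(1,\tfrac{n}{n-\a})$ bound for convolution with $K_\a$ (a consequence of the Lorentz convolution inequalities, Theorem \ref{Conv}) gives
$$|\{x\in Q:I_\a f_1(x)>(1-C_1\g)\l\}|\le C\Big(\frac{\|f_1\|_1}{\l}\Big)^{n/(n-\a)}\le C'\g^{n/(n-\a)}|Q|.$$
Summing over Whitney cubes yields the desired good-$\l$ inequality.

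The main obstacle will be the smoothness estimate for $I_\a f_2$: it requires comparing $M_\a f$ at the nearby points $x_Q$ and $x^*$, which works cleanly at scales $r\ge c_0\ell$, where balls $B(x_Q,r)$ and $B(x^*,r)$ are essentially the same — and by construction the support of $f_2$ ensures that only these large scales contribute to the differencing integral. The purely rearrangement-based route suggested by the methods of this paper (apply Theorem \ref{ONeil} to $I_\a f=K_\a\ast f$, obtaining $(I_\a f)^{**}(t)\le C[t^{\a/n}f^{**}(t)+\int_t^\infty u^{\a/n-1}f^*(u)du]$, and then compare with $M_\a f$ via Hardy's inequality) will not succeed directly: the companion inequality $(M_\a f)^{**}(t)\ge ct^{\a/n}f^{**}(t)$ that one would need is false, as is shown by $f=\sum_{i=1}^N\chi_{B(x_i,\e)}$ with widely separated centres, for which $t^{\a/n}f^{**}(t)$ exceeds $(M_\a f)^{**}(t)$ by a factor of order $N^{\a/n}$ at $t=\|f\|_1$. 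The inequality $\|I_\a f\|_p\le c\|M_\a f\|_p$ is therefore a genuine $L^p$-level phenomenon and not a pointwise rearrangement fact.
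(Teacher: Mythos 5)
Your proof is correct in substance, but there is nothing in the paper to compare it against: the survey states this theorem with a bare citation to Muckenhoupt--Wheeden and Adams--Hedberg and gives no proof. What you have reconstructed is essentially the argument of those references: a good-$\l$ inequality over a Whitney decomposition of $\Omega_\l=\{I_\a f>\l\}$, with the far part of $f$ controlled by differencing the Riesz kernel against a point of $\Omega_\l^c$ and the near part by the weak-type $(1,n/(n-\a))$ endpoint of the Hardy--Littlewood--Sobolev theorem (which the paper does record as a consequence of O'Neil's inequality). The individual estimates check out: the gradient bound $|\nabla K_\a(w)|\le C|w|^{\a-n-1}$ yields the dyadic summation for $I_\a f_2$; the comparison of averages centred at $x_Q$ with $M_\a f(x^*)$ is legitimate precisely because only scales $r\gtrsim\ell$ occur; and the exponent $\g^{n/(n-\a)}$ comes out correctly. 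Your closing observation --- that no companion inequality $(M_\a f)^{**}(t)\ge c\,t^{\a/n}f^{**}(t)$ can hold, so the theorem is genuinely outside the reach of the rearrangement techniques emphasized in this survey --- is also correct; the separated-bumps example does produce the $N^{\a/n}$ discrepancy, and this explains why the paper cites the result rather than deriving it from O'Neil's inequality. Two technical points deserve more care than the phrase ``truncation and density'': (i) the Whitney decomposition requires $\Omega_\l\ne\R^n$, which you should dispose of by noting that the good-$\l$ inequality is vacuous when $|\Omega_\l|=\infty$; (ii) the absorption should be run on the truncated integral $\int_0^N p\l^{p-1}|\{I_\a f>\l\}|\,d\l$, which is finite for bounded compactly supported $f$, and then one lets $N\to\infty$ --- otherwise you cannot subtract $2^pC\g^\d\|I_\a f\|_p^p$ from both sides. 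A final remark: the same good-$\l$ scheme, with $|Q|$ replaced by $w(Q)$ and the doubling/$A_\infty$ property invoked in the last step, gives the weighted version mentioned in the paper, which is one thing this approach buys beyond the unweighted statement.
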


Actually this inequality was proved for $L^p_w,$ where $w$ is an $A_\infty$ weight.

\vskip 8pt

\centerline{\bf Estimates of rearrangements} \vskip 8pt

 As it was mentioned above, for the classical Hardy -- Littlewood maximal function we have
 (F. Riesz, C. Herz)
 $$
c^{-1}f^{**}(t)\le (Mf)^*(t)\le c f^{**}(t)\quad (c>0).
$$

Observe that for $\f(x)=1/|x|$ $(x\in \R^n))$ we have
$\f^*(t)=(v_n/t)^{1/n}$, where $v_n$ is the measure of the
$n-$dimensional unit ball. Indeed,
$$
|\{x: \frac1{|x|}>y\}|=\frac{v_n}{y^n}\quad(y>0),
$$
and we find $y=\f^*(t)$ from the equation $t=v_n y^{-n}.$  Thus,
for the Riesz kernel $K_\a(x)=|x|^{\a-n}/\g_n(\a)$ $(0<\a<n)$ we
have
$$
K_n^*(t)=\frac{c}{t^{1-\a/n}}\quad\mbox{and}\quad K_n^{**}(t)=\frac{c'}{t^{1-\a/n}}
$$
Applying O'Neil's inequality, we obtain $(0<\a<n)$
$$
(I_\a f)^{**}(t)\le c\left[t^{\a/n}f^{**}(t)+ \int_t^\infty s^{\a/n-1}f^*(s)ds\right].
$$

It follows that the operator $I_\a$ is bounded from $L^1$ into $L^{n/(n-\a),\infty}$ and from $L^{n/\a,1}$ into $L^\infty$.
These statements are sharp.

Cianchi, Kerman, Opic, and Pick \cite{CKOP}  obtained a
sharp rearrangement inequality for $M_\a f.$
\begin{teo}\label{CKOP}
 Let $n\in \N$ and $0<\a<n.$ Then there exists a  constant $C>0$ , depending only on $n$ and $\a$, such that
 $$
 (M_\a f)^*(t)\le C \sup_{t\le s<\infty} s^{\a/n}f^{**}(s), \,\, t>0,
 $$
 for every $f\in L^1_{\operatorname{loc}}(\R^n).$
 \end{teo}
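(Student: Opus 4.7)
The plan is to split the sup defining $M_\alpha f$ according to whether the radius is large or small relative to the scale $t$, estimate each piece separately, and then recombine via the subadditivity of nonincreasing rearrangements. Write $\Lambda(t)=\sup_{t\le s<\infty}s^{\alpha/n}f^{**}(s)$ for the quantity on the right.

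Set $r_0=(t/v_n)^{1/n}$, so that $|B(x,r_0)|=t$. Define
$$
A(x)=\sup_{r\ge r_0}\frac{1}{|B(x,r)|^{1-\alpha/n}}\int_{B(x,r)}|f(y)|\,dy,\qquad B(x)=\sup_{0<r<r_0}\frac{1}{|B(x,r)|^{1-\alpha/n}}\int_{B(x,r)}|f(y)|\,dy,
$$
so $M_\alpha f\le A+B$. For the first piece I would invoke the Hardy--Littlewood inequality (\ref{sup}) applied to $|f|$ and the characteristic function of $B(x,r)$: for any ball of measure $s=|B(x,r)|$,
$$
\int_{B(x,r)}|f(y)|\,dy\le\int_0^s f^*(u)\,du=s\,f^{**}(s).
$$
Hence $A(x)\le\sup_{s\ge t}s^{\alpha/n}f^{**}(s)=\Lambda(t)$ \emph{pointwise}, since when $r\ge r_0$ the measure $s=v_n r^n\ge t$.

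For the second piece I would use the trivial bound $|B(x,r)|^{\alpha/n}\le t^{\alpha/n}$ whenever $r<r_0$, which gives
$$
B(x)\le t^{\alpha/n}\sup_{0<r<r_0}\frac{1}{|B(x,r)|}\int_{B(x,r)}|f(y)|\,dy\le t^{\alpha/n}Mf(x).
$$
Combining the two pointwise estimates, $M_\alpha f(x)\le\Lambda(t)+t^{\alpha/n}Mf(x)$, so that $(M_\alpha f-\Lambda(t))_+(x)\le t^{\alpha/n}Mf(x)$ and therefore, by monotonicity of the rearrangement,
$$
(M_\alpha f)^*(t)\le\Lambda(t)+t^{\alpha/n}(Mf)^*(t).
$$
Finally, by the Riesz--Herz estimate $(Mf)^*(t)\le c\,f^{**}(t)$ recalled just before the theorem, we get
$$
(M_\alpha f)^*(t)\le\Lambda(t)+c\,t^{\alpha/n}f^{**}(t)\le (1+c)\,\Lambda(t),
$$
because the last term is dominated by $\Lambda(t)$ (take $s=t$ in the sup).

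The only delicate point is the bookkeeping in recombining the two pieces of $M_\alpha f$: one must avoid the naive use of $(u+v)^*(t)\le u^*(t/2)+v^*(t/2)$, which would produce the wrong scale inside $\Lambda$. The observation that $A$ is bounded by the \emph{constant} $\Lambda(t)$ (not merely by a function) lets us pass directly to $(M_\alpha f-\Lambda(t))_+$ and keep the argument of $Mf^*$ exactly at $t$, which is what makes the final inequality sharp.
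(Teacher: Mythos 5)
Your proof is correct and follows essentially the same route as the paper: both split the supremum over radii at the scale where the ball has measure comparable to $t$, bound the large-radius part uniformly by $\sup_{s\ge t}s^{\a/n}f^{**}(s)$ via the Hardy--Littlewood estimate $\int_{B}|f|\le |B|\,f^{**}(|B|)$, bound the small-radius part by $t^{\a/n}Mf(x)$, and conclude with the Riesz--Herz equivalence $(Mf)^*\asymp f^{**}$. Your explicit remark on recombining via the constant bound on the large-radius piece (rather than the crude subadditivity at scale $t/2$) is exactly the point the paper leaves implicit when it passes from $\|F_2\|_\infty$ to the rearrangement of the sum.
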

\begin{proof}
Denote
$$
\Phi(x,r)=|B(x,r)|^{\alpha/n-1}\int_{B(x,r)}|f(y)|dy
$$
and
$$
F(x)=\sup_{r>0}\Phi(x,r).
$$
We have that
$$
\Phi(x,r)\le [v_n^{1/n}r]^\alpha\min[Mf(x), f^{**}(v_nr^n)].
$$
Fix $t>0.$ Set
$$
F_1(x)=\sup_{0<r<t^{1/n}}\Phi(x,r),~~ F_2(x)=\sup_{r\ge
t^{1/n}}\Phi(x,r).
$$
We have
$$
F_1(x)\le v_n^{\alpha/n}t^{\alpha/n}Mf(x).
$$
Thus,
$$
F_1^*(t)\le c t^{\alpha/n}f^{**}(t).
$$
On the other hand,
$$
F_2(x)\le \sup_{r\ge t^{1/n}}[v_n^{1/n}r]^\alpha  f^{**}(v_n r^n).
$$
It follows that
$$
||F_2||_\infty\le c\sup_{s\ge t}s^{\alpha/n}f^{**}(s).
$$
\end{proof}
\vskip 6pt

If $\a=0,$ we get the Riesz estimate $(Mf)^*(t)\le C f^{**}(t).$

\vskip 6pt

\begin{cor} The operator $M_\a$ is bounded from $L^{n/\a,\infty}$ to $L^\infty.$
\end{cor}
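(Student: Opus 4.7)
The plan is to derive this directly from Theorem \ref{CKOP} together with the elementary fact that for $f \in L^{n/\alpha,\infty}$ the weighted maximal average $s^{\alpha/n} f^{**}(s)$ is uniformly bounded in $s$.

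First, I would unpack the definition: if $f \in L^{n/\alpha,\infty}$ and $M := \|f\|_{n/\alpha,\infty} = \sup_{u>0} u^{\alpha/n} f^*(u)$, then $f^*(u) \le M u^{-\alpha/n}$ for every $u>0$. Integrating and using $\alpha < n$ gives
$$
f^{**}(s) = \frac{1}{s}\int_0^s f^*(u)\,du \le \frac{M}{s}\int_0^s u^{-\alpha/n}\,du = \frac{n}{n-\alpha}\, M\, s^{-\alpha/n},
$$
so that $s^{\alpha/n} f^{**}(s) \le \frac{n}{n-\alpha}\,\|f\|_{n/\alpha,\infty}$ for every $s>0$.

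Next, I would feed this into the rearrangement estimate from Theorem \ref{CKOP}: for every $t>0$,
$$
(M_\alpha f)^*(t) \le C \sup_{t \le s < \infty} s^{\alpha/n} f^{**}(s) \le \frac{Cn}{n-\alpha}\,\|f\|_{n/\alpha,\infty},
$$
where the right-hand side is independent of $t$. Letting $t \to 0^+$ and using $\|M_\alpha f\|_\infty = \sup_{t>0}(M_\alpha f)^*(t)$ yields the desired bound
$$
\|M_\alpha f\|_\infty \le \frac{Cn}{n-\alpha}\,\|f\|_{n/\alpha,\infty}.
$$

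There is no real obstacle here: the only point worth noting is that the condition $\alpha < n$ is exactly what makes the integral $\int_0^s u^{-\alpha/n}\,du$ finite, i.e., it guarantees that membership in the weak Lorentz space $L^{n/\alpha,\infty}$ is strong enough to control $f^{**}$ with the correct homogeneity. Once that computation is in place, the corollary is an immediate consequence of Theorem \ref{CKOP}.
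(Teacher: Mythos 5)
Your argument is correct and is precisely the intended deduction: the paper states this as an immediate corollary of Theorem \ref{CKOP}, and the only content is the elementary computation $s^{\alpha/n}f^{**}(s)\le \frac{n}{n-\alpha}\|f\|_{n/\alpha,\infty}$, which you carry out correctly. Nothing further is needed.
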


\vskip 8pt

\centerline{\bf Refiniment of the Hardy-Littlewood-Sobolev
Theorem}

 \vskip 8pt

 The following theorem was obtained by O'Neil \cite{Neil}.
 \begin{teo}\label{ONeil_2} Let $n\in \N,\,\,\,0<\a<n, \,\,\,1<p<n/\a,$ and $p^*=np/(n-\a p)$. If
$f\in L^p(\R^n)$, then $I_\a f\in L^{p^*,p}/\R^n)$, and
$$
||I_\a f||_{p^*,p}\le c ||f||_p.
$$
\end{teo}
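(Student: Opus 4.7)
The approach is to combine O'Neil's pointwise rearrangement estimate for convolutions (Theorem \ref{ONeil}) with the Hardy-type inequalities from Lemma \ref{hardy}. Since $I_\a f = K_\a * f$ and the rearrangement of the Riesz kernel has already been identified as $K_\a^*(u) \asymp u^{\a/n-1}$, $K_\a^{**}(u) \asymp u^{\a/n-1}$, Theorem \ref{ONeil} together with $(I_\a f)^*(t) \le (I_\a f)^{**}(t)$ yields
$$
(I_\a f)^*(t) \le c\left[t^{\a/n} f^{**}(t) + \int_t^\infty u^{\a/n-1} f^*(u)\,du\right], \quad t>0,
$$
which is exactly the estimate already noted in the excerpt. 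The proof proceeds by plugging this bound into the $L^{p^*,p}$ norm.

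The next step is to record the arithmetic identity $1/p^* + \a/n = 1/p$, whence $t^{1/p^*} \cdot t^{\a/n} = t^{1/p}$ and $p/p^* - 1 = -\a p/n$. Raising the above to the $p$-th power, multiplying by $t^{-1}$, integrating, and using the elementary inequality $(A+B)^p \le 2^{p-1}(A^p + B^p)$ gives
$$
\|I_\a f\|_{p^*,p}^p \le c\int_0^\infty (f^{**}(t))^p\, dt \; + \; c\int_0^\infty t^{-\a p/n}\left(\int_t^\infty u^{\a/n-1} f^*(u)\,du\right)^p dt.
$$
The first term is controlled by $\|f\|_p^p$ thanks to the boundedness $\|f^{**}\|_p \le p'\|f\|_p$ recalled at the beginning of Section 1.

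For the second term I would invoke the second inequality in Lemma \ref{hardy} with $\varphi(u) = u^{\a/n-1} f^*(u)$ and parameter $\a' = 1 - \a p/n$, which is strictly positive precisely because $p < n/\a$. Lemma \ref{hardy} then reduces the tail integral to
$$
\int_0^\infty (t\varphi(t))^p\, t^{\a'-1}\, dt = \int_0^\infty t^{\a p/n}(f^*(t))^p\, t^{-\a p/n}\,dt = \|f\|_p^p,
$$
the powers of $t$ collapsing exactly by the choice of $\a'$. Combining both estimates produces $\|I_\a f\|_{p^*,p} \le c\|f\|_p$.

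The argument is essentially routine once one has O'Neil's inequality in hand; the only place that requires care is verifying that the exponent in Hardy's inequality satisfies $\a' > 0$, which is the quantitative expression of the Sobolev hypothesis $p < n/\a$. Everything else is bookkeeping of the exponents $1/p$, $1/p^*$, and $\a/n$, and the fact that the two terms on the right of O'Neil's inequality are of comparable size after integration, so that both get absorbed into $\|f\|_p$.
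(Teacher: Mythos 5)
Your proof is correct and follows exactly the route the paper indicates for Theorem \ref{ONeil_2}: insert the rearrangement of the Riesz kernel into O'Neil's inequality (Theorem \ref{ONeil}) and then apply the second Hardy inequality of Lemma \ref{hardy} with exponent $\a'=1-\a p/n>0$ together with $\|f^{**}\|_p\le p'\|f\|_p$. The exponent bookkeeping checks out, so nothing further is needed.
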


Indeed, we have seen that, by O'Neil inequality,
$$
(I_\a f)^{**}(t)\le c\left[t^{\a/n}f^{**}(t)+ \int_t^\infty
s^{\a/n-1}f^*(s)ds\right].
$$
It remains to apply Hardy's inequalities.

\vskip 4pt

Since we have a strict embedding $L^{p^*,p}(\R^n)\subset
L^{p^*}(\R^n)$, Theorem \ref{ONeil_2} provides a refinement of the
Hardy-Littlewood-Sobolev Theorem.

\vskip 8pt

\centerline{\bf The limiting case $p=n/\a, \,\, 0<\a<n$}

\vskip 8pt The following theorem was proved by D. Adams
\cite{Adams}.
\begin{teo}\label{Adams} Let $n\in \N,\,\,\,0<\a<n, \,\,\,p=n/\a,$ and let $f\in L^p(\R^n).$
Assume that $\operatorname{supp} f\subset B(0,R)$ and that
$||f||_p=1.$ Then there is a constant $A=A(n,\a)$ such that
$$
\int_{B(0,R)}\operatorname{exp}\left(\b_0|I_\a
f(x)|^{p'}\right)dx\le AR^n,
$$
where $\b_0=\b_0(n,\a)=\g_n(\a)n/\omega_{n-1}$ ($\omega_{n-1}$ is
the area of the $n-$dimensional unit sphere).
\end{teo}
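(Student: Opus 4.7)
The strategy is to reduce the multidimensional exponential integrability to a one-dimensional Hardy-type estimate by non-increasing rearrangements, paralleling the derivation of Theorem~\ref{ONeil_2}. The three ingredients are O'Neil's inequality (Theorem~\ref{ONeil}), a logarithmic change of variables, and Hölder's inequality.

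First, apply Theorem~\ref{ONeil} to $h = I_\alpha f = K_\alpha \ast f$. Since $K_\alpha$ is radial decreasing, one computes $K_\alpha^*(u) = \gamma_n(\alpha)^{-1}(v_n/u)^{1/p'}$ (as noted earlier, with $1/p' = 1 - \alpha/n$) and $K_\alpha^{**}(t) = p\gamma_n(\alpha)^{-1}(v_n/t)^{1/p'}$. Substituting into the O'Neil bound yields
\begin{equation*}
(I_\alpha f)^*(t) \le \frac{v_n^{1/p'}}{\gamma_n(\alpha)}\left[\frac{p}{t^{1/p'}}\int_0^t f^*(s)\,ds + \int_t^\infty \frac{f^*(s)\,ds}{s^{1/p'}}\right].
\end{equation*}
Because $\operatorname{supp} f \subset B(0,R)$, the rearrangement $f^*$ vanishes outside $(0,m)$, where $m = v_n R^n = |B(0,R)|$, so the second integral is effectively over $(t,m)$.

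Next, pass to exponential coordinates: set $t = me^{-u}$ for $u \ge 0$ and define $F(u) = (me^{-u})^{1/p} f^*(me^{-u})$, so that $\|F\|_{L^p(\mathbb{R}_+)} = \|f\|_p = 1$. A direct substitution converts the two integrals above into
\begin{equation*}
\int_u^\infty F(v)\, e^{(u-v)/p'}\,dv \qquad \text{and} \qquad \int_0^u F(v)\,dv,
\end{equation*}
respectively. Hölder's inequality on $(u,\infty)$, together with $\int_u^\infty e^{u-v}\,dv = 1$, bounds the first by $\|F\|_{L^p(u,\infty)} \le 1$; Hölder on $(0,u)$ bounds the second by $u^{1/p'}\|F\|_{L^p(0,u)} \le u^{1/p'}$. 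Using the elementary inequality $(a+b)^{p'} \le (1+\varepsilon)b^{p'} + C_\varepsilon a^{p'}$ to absorb the factor $p$ into a lower-order term, I would obtain
\begin{equation*}
\bigl|(I_\alpha f)^*(me^{-u})\bigr|^{p'} \le C_1 (1+\varepsilon)\, u + C_\varepsilon,
\end{equation*}
with leading constant $C_1$ depending only on $n$ and $\alpha$ through $\gamma_n(\alpha)$ and $v_n$.

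Finally, express the theorem's integral via the rearrangement and substitute $t = me^{-u}$:
\begin{equation*}
\int_{B(0,R)} \exp\bigl(\beta_0|I_\alpha f|^{p'}\bigr)\,dx \;\le\; m\int_0^\infty \exp\bigl(\beta_0\bigl|(I_\alpha f)^*(me^{-u})\bigr|^{p'}\bigr)\,e^{-u}\,du.
\end{equation*}
Inserting the bound from Step 2, the $u$-integral becomes geometric and converges whenever $\beta_0 C_1 (1+\varepsilon) < 1$; the supremum of admissible $\beta_0$, obtained as $\varepsilon \to 0$, is exactly the stated threshold, yielding the bound $AR^n$ with $A = A(n,\alpha)$. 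The main obstacle is the careful bookkeeping of constants through the O'Neil estimate, the change of variables, and the two Hölder bounds, so as to extract the precise value of $\beta_0$; the factor $p$ from the near-part of O'Neil causes no loss in the exponent of convergence because it is absorbed into $C_\varepsilon$, and what determines $\beta_0$ is only the leading coefficient $C_1$ in front of the linear term in $u$. Sharpness of $\beta_0$ itself (that it cannot be enlarged) is a separate matter, verified by an explicit extremal sequence and not needed for the upper bound.
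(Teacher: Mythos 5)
The paper itself does not prove Theorem \ref{Adams}; it only states it with a reference to Adams's paper, so there is no internal proof to compare against. Your reduction is the standard one and its first stages are sound: O'Neil's inequality with $K_\a^*(t)=\g_n(\a)^{-1}(v_n/t)^{1/p'}$, the restriction of the far integral to $(t,m)$ with $m=v_nR^n$, the substitution $t=me^{-u}$, $F(v)=(me^{-v})^{1/p}f^*(me^{-v})$, and the two H\"older bounds giving
$(I_\a f)^*(me^{-u})\le \frac{v_n^{1/p'}}{\g_n(\a)}\bigl(u^{1/p'}+p\bigr)$ are all correct, and the leading coefficient does reproduce the critical threshold.

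The genuine gap is at the endpoint, and it is exactly the point of Adams's (and Moser's) theorem. Your scheme proves exponential integrability for every $\b<\b_0$ but not at $\b=\b_0$. With the sharp leading constant one has $\b_0 C_1=1$, so after the absorption $(a+b)^{p'}\le(1+\e)b^{p'}+C_\e a^{p'}$ the exponent in the $u$-integral becomes $(1+\e)u-u=\e u$, and $\int_0^\infty e^{\e u}\,du=\infty$ for every $\e>0$; letting $\e\to0$ does not rescue this, since a supremum of admissible constants need not be admissible. (Indeed, the pointwise bound $\bigl(\int_0^uF\bigr)^{p'}\le u$ is attained, up to bounded error, by $F=u^{-1/p}\chi_{(0,u)}$, so no pointwise estimate alone can give convergence at $\b_0$.) What is needed is the measure-theoretic lemma at the heart of Adams's proof (descending from Moser and Garsia): if $\int_0^\infty\phi(s)^p\,ds\le1$ and $a(s,t)$ is a kernel with $a\le1$ for $0<s<t$ and uniformly $p'$-integrable tails, then $\int_0^\infty e^{-F(t)}\,dt\le c_0(p,b)$ for $F(t)=t-\bigl(\int a(s,t)\phi(s)\,ds\bigr)^{p'}$; its proof estimates the measure of the level sets $\{t:F(t)\le L\}$ by a counting/packing argument, not by H\"older. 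Without this lemma (or an equivalent), your argument establishes only the subcritical statement, which, as the survey itself notes, was already known before Moser. A secondary remark: your bookkeeping actually yields the threshold $\g_n(\a)^{p'}n/\o_{n-1}$ for the normalized potential $K_\a\ast f$, which suggests the exponent of $\g_n(\a)$ in the paper's statement of $\b_0$ should be checked.
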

\vskip 6pt This theorem has an interesting history. Moser
\cite{Moser} in 1971 proved that
\begin{equation}\label{Moser}
\int_{B(0,R)}\operatorname{exp}\left(\b|u(x)|^{n'}\right)dx\le
AR^n
\end{equation}
for all $0<\b\le \b_0=n\omega_{n-1}^{1/(n-1)}$ and all $u\in \stackrel{\circ}{W}_n^1(B(0,R))$ with
$$
 \int_{B(0,R)}|\nabla u(x)|^n\,dx\le 1
 $$
    ($\stackrel{\circ}{W}_n^1(\Omega)$ is the closure of
    $C_0^\infty(\Omega)$ in    $W_n^1(\Omega)).$ It was known before
    that (\ref{Moser}) holds for {\it some} $\b>0.$

    \vskip 6pt

    Theorem of Adams implies Moser's theorem.   Indeed, the
    identity
    $$
    g(x)=\frac1{\omega_{n-1}} \int_{\R^n} \frac{\nabla g(y)\cdot
    (x-y)}{|x-y|^n}\,dy
    $$
    yields that
    $$
    |g(x)|\le \frac{\g_n(1)}{\omega_{n-1}}I_1(|\nabla g|)(x).
    $$

    Observe that the opposite isn't true.
    \vskip 6pt

Finishing this section, we mention the paper by Eiichi Nakai
\cite{Nakai}  which contains a short but comprehensive survey of
the results on Riesz potentials.

 \vskip 8pt

    \section{Bessel potentials}
    \vskip 8pt

    The Riesz potentials have a lot of important applications.
    However, the kernels $K_\a(x)$ go slowly to zero as $|x|$
    tends to $\infty.$ This creates some difficulties in the use
    of Riesz potentials. In particular, the operator $I_\a$ is not
    bounded on $L^p.$

    It is natural to replace $K_\a$ with a function that has the
    same singularity at $0$   but a more rapid decay at $\infty.$

    The Fourier transform of a function $f\in L^1(\R^n)$ is
    defined by
    $$
    \widehat{f}(\xi)= \int_{\R^n} f(x)e^{-i2\pi x\cdot \xi}dx,
    \quad \xi\in \R^n.
    $$

    The Bessel kernel of order $\a>0$ on $\R^n$ is defined as the
function for which the Fourier transform equals
$$
\widehat G_\a(\xi)=(1+4\pi^2|\xi|^2)^{-\a/2}, \quad \xi\in \R^n.
$$
The following equality holds (see \cite[8.1]{Nik})
$$
G_\a(x)=\frac{1}{(4\pi)^{\a/2}\Gamma(\a/2)}\int_0^\infty
z^{(\a-n)/2-1}e^{-\pi |x|^2/z-z/(4\pi)}\,dz.
$$

We have
$$
\int_{\R^n} G_\a(x)dx=1,
$$
$$
G_{\a+\b}=G_\a\ast G_\b\quad (\a,  \b>0).
$$
Further, for $0<\a<n,$
$$
G_\a(x)=K_\a(x)+o(|x|^{\a-n}),\quad x\rightarrow 0.
$$
Thus, the local behaviour of the kernel $G_\a(x)$ as $x\rightarrow
0$ is the same as that of $K_\a$. The advantage of the Bessel
kernels is that they decrease sufficiently fast on the infinity,
$$
G_\a(x)=O(e^{-|x|/2}),\quad  x\rightarrow \infty.
$$

If $f\in L^p(\R^n)\,\,\,(1\le p\le \infty),$ then the Bessel
potential of order $\a>0$ for the function $f$ is defined by the
equality
$$
J_\a f(x)=G_\a \ast f(x).
$$
By Minkowski's inequality,
$$
||J_\a f||_p\le ||G_\a||_1||f||_p=||f||_p,\quad 1\le p\le \infty.
$$

If $\a\ge n,$ then $G_\a\in L^p(\R^n)$ for any $1\le p\le \infty.$
In the case $0<\a<n$ we have $G_\a\in L^s(\R^n)$ for all $1\le
s<n/(n-\a).$ Applying Young's inequality, we have
$$
||J_\a f||_q\le c ||f||_p, \quad 1\le p\le q\le \infty, \,\,
1/p-1/q<\a/n.
$$

By the Hardy -- Littlewood -- Sobolev theorem on Riesz potentials,
$$
||I_\a f||_{p^*}\le c||f||_p, \quad 1<p<\frac{n}{\a},\,\,
0<\a<n,\,\, p^*=\frac{np}{n-\a p}.
$$
Since $G_\a$ is majorized by $K_\a$ on the unit ball and
$G_\a(x)=O(e^{-|x|/2})$ at infinity, we obtain that
$$
||J_\a f||_{p^*} \le c||f||_p, \quad 1<p<\frac{n}{\a},\,\, 0<\a<n.
$$
\vskip 8pt

\centerline{\bf Relations between Bessel and Riesz potentials}
\vskip 8pt

Let $\a>0$ and $1<p<\infty.$ Let $F=G_\a\ast f,$ where $f\in
L^p(\R^n)$. Then $F$ can be represented as $F=K_\a\ast g,$ where
$g\in L^p(\R^n)$.

Indeed,
$$
\widehat{f}(\xi)=(1+4\pi^2|\xi|^2)^{\a/2}\widehat{F}(\xi).
$$
From here,
\begin{equation}\label{Bessel_10}
\left(\frac{4\pi^2|\xi|^2}{1+4\pi^2|\xi|^2}\right)^{\a/2}\widehat{f}(\xi)=(2\pi|\xi|)^{\a/2}\widehat{F}(\xi).
\end{equation}
We  show that the left-hand side of (\ref{Bessel_10}) is the
Fourier transform of some function $g\in L^p(\R^n)$.

We shall use the binomial expansion
$$
(1-t)^{\a/2}=1+\sum_{m=1}^\infty A_{m,\a} t^m, \quad |t|<1,
$$
$$
A_{m,\a}=(-1)^m\frac{\a(\a-1)\cdots(\a-m+1)}{m!}.
$$
All the $A_{m,\a}$ have the same signs for $m$ sufficiently large.
Thus,
$$
\sum_{m=1}^\infty |A_{m,\a}|<\infty.
$$
Taking $t=(1+4\pi^2|\xi|^2)^{-1}$, we get
$$
\begin{aligned}
\left(\frac{4\pi^2|\xi|^2}{1+4\pi^2|\xi|^2}\right)^{\a/2}&=1+\sum_{m=1}^\infty
A_{m,\a}(1+4\pi^2|\xi|^2)^{-m}\\
&=1+\sum_{m=1}^\infty A_{m,\a}\widehat{G_{2m}}(\xi).
\end{aligned}
$$
We observe that
$$
G_\b(x)\ge 0 \quad\mbox{and}\quad \int_{R^n}G_\b(x)dx=1, \,\,\,
\b>0.
$$
Setting
$$
h(x)= \sum_{m=1}^\infty A_{m,\a}G_{2m}(x),
$$
we have that $h\in L^1(\R^n)$ and
$$
\left(\frac{4\pi^2|\xi|^2}{1+4\pi^2|\xi|^2}\right)^{\a/2}\widehat{f}(\xi)=\widehat{f}(\xi)+\widehat{f\ast
h}(\xi).
$$
We see that the left-hand side of (\ref{Bessel_10}) is the Fourier
transform of the function $g=f+f\ast h.$ Obviously, $g\in
L^p(\R^n).$ Finally, (\ref{Bessel_10}) yields that $F=K_\a\ast g.$

Clearly, the converse is not true - a function represented as a
Riesz potential may not be represented as a Bessel potential.
Indeed, the operator $I_\a$ is not bounded in $L^p.$

\newpage

  \centerline{\bf Fractional Sobolev
spaces}

\vskip 8pt

Let $1\le p\le\infty$ and $r\in \N.$ Denote by $W_p^r(\R^n)$ the
Sobolev space of functions $f\in L^p(\R^n)$ for which all weak
derivatives $D^sf$ $(s=(s_1,...,s_n))$ of order
$|s|=s_1+\cdots+s_n\le r$ exist and belong to $L^p(\R^n).$ The
norm in $W_p^r$ is defined by
$$
||f||_{W_p^r}=\sum_{|s|\le r}||D^sf||_p.
$$

\vskip 4pt

A natural extension of the Sobolev spaces to fractional values of $r$ give the spaces of Bessel potentials
(Sobolev -- Liouville spaces; Aronszajn and Smith; Calder\'on, 1961).

Let $1\le p\le \infty$ and $\a>0.$ We say that a measurable on
$\R^n$ function $f$ belongs to the space $L_p^\a(\R^n)$,
 if there exists a function $g\in L^p(\R^n)$ such that $f(x)=G_\a\ast g (x)$ for almost all $x\in \R^n$  (this function is unique). The norm in
$L^\a_p(\R^n)$  is defined by $||f||_{L^\a_p}=||g||_p.$ We observe
that
$$
L_p^\a(\R^n)\subset L^p(\R^n),\quad L_p^\a(\R^n)\subset L_p^\b(\R^n)\quad (\a>\b, \,\, 1\le p\le \infty).
$$

The following theorem was proved independently by A. Calder\'on \cite{Cald} and Lizorkin \cite{Liz1}.

\begin{teo}\label{C-L} Let $r\in \N$ and $1<p <\infty.$ Then
$$
W_p^r(\R^n)=L_p^r(\R^n),
$$
and the norms are equivalent.
\end{teo}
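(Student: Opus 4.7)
The plan is to establish both containments via Fourier multiplier arguments, the main tool being Mikhlin's multiplier theorem, which asserts that any symbol $m\in C^\infty(\R^n\setminus\{0\})$ with $|\xi|^{|\beta|}|\partial^\beta m(\xi)|\le C_\beta$ for $|\beta|\le\lfloor n/2\rfloor+1$ defines an operator bounded on $L^p(\R^n)$ for $1<p<\infty$.

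For the inclusion $L_p^r\subset W_p^r$, I take $f=G_r\ast g$ with $g\in L^p$ and, for any multi-index $s$ with $|s|\le r$, pass to the Fourier side to obtain $\widehat{D^sf}(\xi)=m_s(\xi)\widehat g(\xi)$ with
$$m_s(\xi)=(2\pi i\xi)^s(1+4\pi^2|\xi|^2)^{-r/2}.$$
Since $|(2\pi\xi)^s|\le(1+4\pi^2|\xi|^2)^{|s|/2}$ and $|s|\le r$, the symbol $m_s$ is bounded on $\R^n$, and a direct computation of multi-derivatives shows that it satisfies Mikhlin's condition uniformly. Consequently $\|D^sf\|_p\le C\|g\|_p$, which gives $f\in W_p^r$ with $\|f\|_{W_p^r}\le C\|f\|_{L_p^r}$.

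For the reverse inclusion $W_p^r\subset L_p^r$, I define $g$ by $\widehat g(\xi)=(1+4\pi^2|\xi|^2)^{r/2}\widehat f(\xi)$ and must show $g\in L^p$ with $\|g\|_p\le C\|f\|_{W_p^r}$. The argument will proceed by induction on $r$. For the base case $r=1$, I use the algebraic identity
$$(1+4\pi^2|\xi|^2)^{1/2}=1+\sum_{j=1}^n n_j(\xi)(2\pi i\xi_j),\qquad n_j(\xi)=\frac{-2\pi i\xi_j}{(1+4\pi^2|\xi|^2)^{1/2}+1},$$
obtained by rationalising $(1+4\pi^2|\xi|^2)^{1/2}-1=4\pi^2|\xi|^2/((1+4\pi^2|\xi|^2)^{1/2}+1)$ and invoking $4\pi^2|\xi|^2=\sum_j(2\pi i\xi_j)(-2\pi i\xi_j)$. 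Each $n_j$ is smooth on $\R^n$, bounded by $1$, and asymptotically equals the Riesz multiplier $-i\xi_j/|\xi|$, so it is a Mikhlin multiplier. Denoting the associated operator by $N_j$, this yields
$$g=f+\sum_{j=1}^n N_j(\partial_jf)\in L^p.$$
For the inductive step, I factor $(1+4\pi^2|\xi|^2)^{r/2}=(1+4\pi^2|\xi|^2)^{(r-1)/2}(1+4\pi^2|\xi|^2)^{1/2}$ and apply the base case to produce $h:=f+\sum_j N_j(\partial_jf)$. Since multiplier operators commute with differentiation, $D^sh=D^sf+\sum_j N_j(\partial_jD^sf)\in L^p$ for every $|s|\le r-1$, so $h\in W_p^{r-1}$ with $\|h\|_{W_p^{r-1}}\le C\|f\|_{W_p^r}$. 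The inductive hypothesis then gives $h\in L_p^{r-1}$, which translates exactly to $f\in L_p^r$.

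The main obstacle will be the careful verification that $m_s$ and $n_j$ satisfy the differential estimates required by Mikhlin's theorem; the computations are routine but must be checked for all multi-indices up to $\lfloor n/2\rfloor+1$, and for $n_j$ one must track separately the behaviour near the origin (where $n_j$ is smooth) and at infinity (where it reduces to the Riesz symbol). The restriction $1<p<\infty$ enters only through Mikhlin's theorem (and implicitly through the $L^p$-boundedness of the Riesz transforms that the $n_j$ model asymptotically), consistent with the well-known failure of the result at the endpoints.
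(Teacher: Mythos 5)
Your argument is correct, and it is the standard proof of the Calder\'on--Lizorkin theorem (essentially the one in Stein, \emph{Singular Integrals}, Ch.~V): both inclusions reduce to the Mikhlin--H\"ormander multiplier theorem, the only nontrivial point being the rationalisation identity $(1+4\pi^2|\xi|^2)^{1/2}=1+\sum_j n_j(\xi)(2\pi i\xi_j)$ that expresses the Bessel symbol of order one through first derivatives and the multipliers $n_j$, which interpolate between a smooth bounded symbol near the origin and the Riesz symbol at infinity. The paper itself states the theorem without proof, attributing it to Calder\'on and Lizorkin, so there is no argument to compare yours against; note, however, that your identity is a close cousin of the device the paper does use in the section on relations between Bessel and Riesz potentials, where $\bigl(4\pi^2|\xi|^2/(1+4\pi^2|\xi|^2)\bigr)^{\a/2}$ is expanded binomially as $1+\sum_m A_{m,\a}\widehat{G_{2m}}(\xi)$ to produce an $L^1$ convolution kernel --- that route avoids the multiplier theorem for that particular symbol but would not by itself give the inclusion $W_p^r\subset L_p^r$, which genuinely needs singular integrals (and indeed fails at $p=1,\infty$, as the paper's remarks (i)--(iii) record). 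Two small points you should make explicit in a full write-up: for $p>2$ the relation $\widehat g=(1+4\pi^2|\xi|^2)^{r/2}\widehat f$ must be interpreted in the sense of tempered distributions, so the clean way to proceed is exactly what you do --- define $g$ as the $L^p$ function produced by the operators $N_j$ and then verify its Fourier transform distributionally; and in the inclusion $L_p^r\subset W_p^r$ one should check, by testing against Schwartz functions, that the function produced by the multiplier $m_s$ is indeed the weak derivative $D^sf$. Neither point causes any difficulty.
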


\vskip 4pt

The relations between $W_p^r(\R^n)$ and $L_p^r(\R^n)$ in the extreme cases $p=1$ and $p=\infty$ are the following.
\vskip 4pt

(i) When $n=1$, then $W_p^r(\R)=L_p^r(\R)$, if $r$ is even and $p=1,\,p=\infty.$

\vskip 4pt

(ii) When $n\ge 2,$ then $W_p^r(\R^n)\subset L_p^r(\R^n)$, if  $r$ is even and $p=1,\,p=\infty;$
the reverse inclusion fails for both $p=1$ and $p=\infty.$

\vskip 4pt

(iii) For all $n$, if $r$ is odd, then no one of the spaces $W_p^r(\R)$ and $L_p^r(\R)$ is contained in the other.

\vskip 6pt

Observe that the proof of (i) is quite simple. Indeed,
$$
\widehat{f''}(\xi)=-4\pi^2\xi^2\widehat f(\xi).
$$
On the other hand,
the equality $f=G_2\ast g$ is equivalent to
$$
\widehat g(\xi)=(1+4\pi^2\xi^2)\widehat f(\xi).
$$
Hence, $\widehat g(\xi)=\widehat f(\xi)-\widehat{f''}(\xi)$ and $g=f-f''.$ Applying induction on $r,$ we obtain (i).

\vskip 6pt

Also, we shall briefly describe the relations between fractional
Sobolev spaces $L_p^\a$ and other spaces of fractional smoothness - Besov spaces.

\vskip 4pt

Let a function $f$ be given on $\R^n.$ For $r\in \mathbb N$  and $h\in \mathbb{R}^n$ we set
$$
\Delta^r(h)f(x)=\sum_{i=0}^r (-1)^{r-i}{r\choose i} f(x+ih).
$$
 If
$f\in L^{p}(\mathbb{R}^n),$ then the function
$$
\o^r(f;\delta)_{p}=\sup_{0\le |h|\le \delta}||\Delta^r(h)f||_{p}
$$
is called the {\it  modulus of continuity of order $r$} of the
function $f$  in $L^{p}.$ \vskip 6pt Let $\a>0$ and $1\le
p,q<\infty.$ Assume that  $r>\a,\,\, r\in \N.$ A function $f\in
L^p(\R^n)$ belongs to the class $B_{p,q}^\a(\R^n)$ if
$$
||f||_{B_{p,q}^\a}\equiv
||f||_p+\left(\int_0^{\infty}\left(t^{-\a}\o^r(f;t)_p\right)^q
\frac{dt}{t}\right)^{1/q}<\infty.
$$

The Nikol'skii space $H_p^\a(\R^n)\equiv B_{p,\infty}^\a(\R^n)$ is defined as the class of all functions $f\in
L^p(\R^n)$ such that
$$
\o^r(f;t)_p=O(\d^\a),\quad r>\a.
$$

 Denote also $B^\a_{p,p}\equiv B^\a_{p}.$

 \vskip 4pt

 The following relations hold.

 $$
 B_{p,q}^\a(\R^n)\subset L_p^\a(\R^n), \quad 1\le q\le\min(p,2)
 $$
 and
 $$
L_p^\a(\R^n)\subset  B_{p,q}^\a(\R^n), \quad q\ge \max(p,2).
$$
\vskip 4pt
If $1\le p\le 2,$ then
$$
B_p^\a\subset L_p^\a\subset B_{p,2}^\a.
$$
If $2\le p<\infty$, then
$$
B_{p,2}^\a\subset  L_p^\a\subset B_{p}^\a.
$$
For $1\le p\le \infty$
$$
B_{p,1}^\a\subset L_p^\a\subset B_{p,\infty}^\a.
$$

\vskip 6pt

\section{Anisotropic spaces}

\vskip 6pt

Let $r\in \N,~~1\le p<\infty,$ and $1\le j\le n.$ Denote by
$W_{p;j}^r(\R^n)$ the Sobolev space of all functions $f\in
L^p(\R^n)$ for which there exists the weak partial derivative
$D_j^rf\in L^p(\R^n).$ Set also
$$
W_p^{r_1,...,r_n}(\R^n)=\bigcap_{j=1}^nW_{p;j}^{r_j}(\R^n)\quad(r_j\in\N,~~1\le
p<\infty).
$$
The norm in $W_p^{r_1,...,r_n}(\R^n)$ is defined by
$$
||f||_{W_p^{r_1,...,r_n}}= ||f||_p +  \sum_{j=1}^n||D_j^rf||_p.
$$

We observe that
$$
W_p^{r,...,r}(\R^n)= W_p^r(\R^n),\quad 1<p<\infty.
$$
It is a consequence of the following theorem (K. Smith
\cite{Smith}).
\begin{teo}\label{Smith}  Let $1<p<\infty$ and $r\in \N.$ Then for
any multi-index $s=(s_1,...,s_n)$ with non-negative integer
components such that
$$
|s|=\sum_{i=1}^n s_i= r,
$$
the weak derivative $D^sf$ exists and
$$
||D^sf||_p\le c \sum_{i=1}^n||D_i^rf||_p.
$$
\end{teo}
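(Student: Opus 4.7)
The plan is to reduce the inequality to a Fourier multiplier estimate and apply the Mihlin--H\"ormander theorem. By mollification and truncation it suffices to prove the bound for $f\in\mathcal{S}(\R^n)$; the general case then follows from the hypothesis $D_j^r f\in L^p$ by a standard approximation.

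Fix a multi-index $s$ with $|s|=r$. For Schwartz $f$ one has $\widehat{D^s f}(\xi)=(2\pi i\xi)^s\widehat f(\xi)$ and $\widehat{D_j^r f}(\xi)=(2\pi i\xi_j)^r\widehat f(\xi)$. Introduce the symbols
$$m_j(\xi)=\frac{\xi^s\,\xi_j^r}{\sum_{k=1}^n\xi_k^{2r}},\qquad j=1,\dots,n,$$
so that, using $|s|=r$ together with the elementary identity $\sum_j\xi^s\xi_j^{2r}=\xi^s\sum_k\xi_k^{2r}$, one obtains
$$\widehat{D^s f}(\xi)=\sum_{j=1}^n m_j(\xi)\,\widehat{D_j^r f}(\xi).$$
Because $2r$ is even, the denominator $\sum_k\xi_k^{2r}$ is a polynomial that vanishes only at the origin. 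Therefore each $m_j$ is $C^\infty$ on $\R^n\setminus\{0\}$ and is homogeneous of degree $|s|+r-2r=0$, so its partial derivatives obey $|\partial^\alpha m_j(\xi)|\le C_\alpha|\xi|^{-|\alpha|}$. By the Mihlin--H\"ormander multiplier theorem each $m_j$ defines a bounded operator $T_{m_j}$ on $L^p(\R^n)$ for every $1<p<\infty$, and summing yields
$$\|D^s f\|_p\le \sum_{j=1}^n\|T_{m_j}(D_j^r f)\|_p\le c\sum_{j=1}^n\|D_j^r f\|_p,$$
as claimed.

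The main obstacle---and the reason for the somewhat contrived choice of $m_j$---is the parity of $r$. When $r$ is odd, the naive candidate $\xi^s/\sum_k\xi_k^r$ is not available because the terms $\xi_k^r$ can have opposite signs and the denominator vanishes on nontrivial sets away from the origin, so the corresponding symbol is not smooth there. Multiplying numerator and denominator by $\xi_j^r$ and summing over $j$ raises every denominator exponent to the always-even integer $2r$, which kills the sign cancellation while preserving zero homogeneity; this is exactly the structural ingredient required by Mihlin's criterion. The restriction $1<p<\infty$ in the conclusion is consistent with the well-known failure of such multiplier bounds at the endpoints $p=1$ and $p=\infty$.
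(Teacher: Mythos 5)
Your argument is correct, and in fact the paper offers no proof of this statement at all: Theorem \ref{Smith} is quoted from K.\ Smith's 1961 announcement, so there is nothing internal to compare against. Your route --- writing $\widehat{D^sf}(\xi)=\sum_j m_j(\xi)\widehat{D_j^rf}(\xi)$ with $m_j(\xi)=\xi^s\xi_j^r\big/\sum_k\xi_k^{2r}$ and invoking Mihlin--H\"ormander --- is the standard proof of this result (it is essentially how it is handled in Besov--Il'in--Nikol'skii and in Lizorkin's work, via Marcinkiewicz- or Mihlin-type multipliers), and the key structural observation is exactly the one you isolate: multiplying through by $\xi_j^r$ forces the even exponent $2r$ in the denominator, giving a symbol that is smooth and homogeneous of degree zero off the origin regardless of the parity of $r$. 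The algebra checks out, since $m_j(\xi)(2\pi i\xi_j)^r$ sums over $j$ to $(2\pi i)^r\xi^s=(2\pi i\xi)^s$. One small point worth tightening: the reduction to $\mathcal{S}(\R^n)$ by ``mollification and truncation'' is slightly delicate, because truncating by a cutoff $\eta(x/R)$ introduces lower-order derivatives $D_j^kf$ $(k<r)$ whose $L^p$ control requires an additional interpolation inequality. This is easily avoided: mollification alone gives smooth functions for which the multiplier identity holds in $\mathcal{S}'$, and since $T_{m_j}$ extends boundedly to all of $L^p$, you can apply it directly to $D_j^rf_\varepsilon\in L^p$ and pass to the limit, which simultaneously produces the weak derivative $D^sf$ as the $L^p$ limit of $D^sf_\varepsilon$.
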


For $p=1$ and $p=\infty$ this theorem fails. Namely, Ornstein
\cite{Orn} constructed a function $f$ on $\R^2$ such that
$||D^{1,1}f||_1$ cannot be estimated by
$C(||D^2_1f||_1+||D^2_2f||_1).$ For $p=\infty$ a counterexample is
given by the function which equals to $xy\ln|\ln (x^2+y^2)|$ in
some neighborhood of the origin (see \cite[Ch. 3]{BIN}).

We will also  consider the fractional Sobolev spaces. These spaces were introduced and studied in the sixties by Lizorkin.

Recall that the Bessel kernel of order $\a>0$ in $\R$ is defined as the
function with Fourier transform
$$
\widehat G_\a(\xi)=(1+4\pi^2\xi^2)^{-\a/2}, \quad \xi\in \R.
$$

 Let $1\le p \le \infty,$  $\a>0,$ and $1\le j\le n.$ Let $f$
be a measurable function on $\R^n.$ We say that $f$ belongs to the
space $L_{p;j}^\a(\R^n)$ if there exists a function $f_j\in
L^p(\R^n)$ such that for almost all $x\in \R^n$
\begin{equation}\label{conv}
f(x)=\int_\R G_\a(x_j-t)f_j(t,\widehat x_j)\,dt.
\end{equation}
It is not difficult to prove that the equality (\ref{conv}) determines the
function $f_j$ uniquely, up to its values on a set of
$n-$dimensional Lebesgue measure zero. We have
$$
||f||_p\le ||f_j||_p.
$$
We call $f_j$ the {\it {Bessel derivative}} of the function $f$ of
order $\a$ with respect to $x_j$. We denote it by $J_j^\a f.$
If $\a$ is fractional, we use also the standard notation $D_j^\a f.$
However, for integer $\a$ we keep the latter notation for the usual weak derivative.

The following equality holds
$$
L_{p;j}^\a(\R^n)=W_{p;j}^\a(\R^n)\quad(1<p<\infty,\,\, \a\in \N)
$$
and the norms are equivalent.

If $1\le p\le\infty$ and $\a>0,$ we set $\tilde L_{p,j}^\a =
L_{p,j}^\a$ if $\a$ is fractional, and $\tilde L_{p,j}^\a =
W_{p,j}^\a$ if $\a$ is integer.

Let $\a_j>0~~(j=1,...,n)$ and $1\le p\le \infty.$ Set
$$
L_p^{\a_1,...,\a_n}(\R^n)=\bigcap_{j=1}^n\tilde L_{p;j}^{\a_j}(\R^n)
$$
and
$$
||f||_{L_p^{\a_1,...,\a_n}}=\sum_{j=1}^n ||f||_{\tilde L_{p;j}^{\a_j}}
$$
We shall call $L_p^{\a_1,...,\a_n}(\R^n)$ a fractional Sobolev
space or a Sobolev-Liouville space. For integer $\a_j$ and $1\le p\le\infty$
$$
W_p^{\a_1,...,\a_n}(\R^n)=L_p^{\a_1,...,\a_n}(\R^n).
$$

We observe that Lizorkin defined $L_p^{\a_1,...,\a_n}(\R^n)$ as the intersection
$$
\bigcap_{j=1}^n L_{p;j}^{\a_j}(\R^n)
$$
(that is, he used only Bessel's derivatives). Our definition
differs from this only in the case when $p=1$ or $p=\infty$ and at
least one of the $\a_j$ is an odd integer.

We have the equality
$$
L_p^{\a,...,\a}(\R^n)=L_p^\a(\R^n)\quad (\a>0,\,\, 1<p<\infty),
$$
and the norms are equaivalent (Lizorkin \cite{Liz},
Strichartz \cite{Str}).

It is possible to show that $L_1^\a(\R^n)\not\subset L_1^{\a,...,\a}(\R^n)$ for $n\ge 2$. We have an {\it open problem}: is it true that
$$
 L_1^{\a,...,\a}(\R^n)\subset L_1^\a(\R^n),\quad n\ge 2 \,?
 $$
 For $W-$ spaces it is obviously true.

 \vskip 8pt

 \centerline{\bf Embeddings}
 \vskip 8pt

Using O'Neil's inequality for  Riesz potentials, we have for $1<p<n/\a,\,\,\, p^*=np/(n-\a p)$
$$
L_p^\a(\R^n)\subset L^{p^*,p}(\R^n).
$$
We stress that this embedding fails for $p=1.$

In the anisotropic case Lizorkin \cite{Liz} proved the following
Sobolev type embedding.
\begin{teo}\label{Lizor} Let $\a_j>0 ~~(j=1,...,n)$,
$$
\a=n\left(\sum_{j=1}^n\frac1{\a_j}\right)^{-1}, \,\, 1<p<\frac
{n}{\a}, \quad\mbox{and}\quad p^*=\frac{np}{n-\a p}.
$$
Then for every function $f\in
L_p^{\a_1,...,\a_n}(\R^n)$
$$
||f||_{p^*} \le c||f||_{L_p^{\a_1,...,\a_n}}.
$$
\end{teo}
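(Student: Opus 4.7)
The strategy is to write $f$ as a convolution $f=K*G$, where $K$ is an anisotropic Riesz kernel whose Fourier symbol balances the $\alpha_j$ against the dimension, and $G\in L^p(\R^n)$ is assembled from the Bessel derivatives $J_j^{\alpha_j}f$. With this representation in hand, the embedding will follow from O'Neil's inequality (Theorem \ref{ONeil}) and Hardy's inequalities (Lemma \ref{hardy}), mirroring the proof of Theorem \ref{ONeil_2} in the isotropic case. The harmonic--mean normalization $\alpha=n(\sum 1/\alpha_j)^{-1}$ is exactly what is needed to make the symbol quasi--homogeneous of the right degree: the dilation $\xi_j\mapsto t^{1/\alpha_j}\xi_j$ has Jacobian $t^{\sum 1/\alpha_j}=t^{n/\alpha}$.

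\textbf{Step 1 (Fourier multiplier reduction).} For each $j$ let $f_j=J_j^{\alpha_j}f\in L^p(\R^n)$; for integer $\alpha_j$ this is identified with the weak derivative $D_j^{\alpha_j}f$ via Theorem \ref{C-L}, since $1<p<\infty$. Thus $\widehat{f_j}(\xi)=(1+4\pi^2\xi_j^2)^{\alpha_j/2}\widehat f(\xi)$. Define $g_j$ by
$$\widehat{g_j}(\xi)=m_j(\xi_j)\widehat{f_j}(\xi),\qquad m_j(\xi_j)=\frac{(2\pi|\xi_j|)^{\alpha_j}}{(1+4\pi^2\xi_j^2)^{\alpha_j/2}},$$
so that $\widehat{g_j}(\xi)=(2\pi|\xi_j|)^{\alpha_j}\widehat f(\xi)$. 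The one--variable symbol $m_j$ is smooth off the origin, uniformly bounded, and satisfies $|\xi_j|^k|m_j^{(k)}(\xi_j)|\le C_{k,\alpha_j}$ for $k=0,1,2$. By the Mihlin--H\"ormander theorem it is a bounded $L^p(\R)$--multiplier, and by Fubini a bounded $L^p(\R^n)$--multiplier, so $\|g_j\|_p\le c\|f_j\|_p$.

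\textbf{Step 2 (Anisotropic Riesz kernel).} Set $P(\xi)=\sum_{j=1}^n(2\pi|\xi_j|)^{\alpha_j}$ and $G=\sum_{j=1}^ng_j\in L^p(\R^n)$. Summing the Step 1 identities yields $\widehat G(\xi)=P(\xi)\widehat f(\xi)$, whence $\widehat f=\widehat K\,\widehat G$ with $\widehat K(\xi)=1/P(\xi)$, i.e. $f=K*G$. The symbol $\widehat K$ is quasi--homogeneous of degree $-1$ under $\xi_j\mapsto t^{1/\alpha_j}\xi_j$. Using the subordination formula
$$\frac{1}{P(\xi)}=\int_0^\infty e^{-sP(\xi)}\,ds=\int_0^\infty\prod_{j=1}^ne^{-s(2\pi|\xi_j|)^{\alpha_j}}\,ds$$
(each factor being the Fourier transform in $x_j$ of an $L^1\cap L^\infty(\R)$ function), $K$ is a locally integrable function on $\R^n\setminus\{0\}$, quasi--homogeneous of degree $(n/\alpha)-1$ under $x_j\mapsto t^{-1/\alpha_j}x_j$. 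Computing the distribution function of $K$ from this scaling (the level set $\{K>\lambda\}$ rescales into $\{K>1\}$ with Jacobian $\lambda^{-n/(n-\alpha)}$) yields the sharp rearrangement asymptotic
$$K^*(t)\asymp K^{**}(t)\asymp t^{\alpha/n-1},\qquad t>0.$$

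\textbf{Step 3 (Conclusion via O'Neil and Hardy).} Applying O'Neil's inequality (Theorem \ref{ONeil}) to $f=K*G$ and inserting the kernel estimate from Step 2 gives
$$f^{**}(t)\le c\,t^{\alpha/n}G^{**}(t)+c\int_t^\infty s^{\alpha/n-1}G^*(s)\,ds.$$
Since $1/p^*+\alpha/n=1/p$, multiplying by $t^{1/p^*}$ and taking the $L^p(dt/t)$--norm, the first term is controlled by $\|G^{**}\|_p\le p'\|G\|_p$, and the second by the second inequality in Lemma \ref{hardy} (applied with exponent $p/p^*$). One obtains the Lorentz bound
$$\|f\|_{p^*,p}\le c\|G\|_p\le c\sum_{j=1}^n\|g_j\|_p\le c\sum_{j=1}^n\|J_j^{\alpha_j}f\|_p=c\|f\|_{L_p^{\alpha_1,\ldots,\alpha_n}}.$$
Since $p\le p^*$, the strict embedding $L^{p^*,p}\subset L^{p^*}$ finishes the proof.

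\textbf{Main obstacle.} The heart of the argument is Step 2: verifying that the tempered distribution with symbol $1/P(\xi)$ is a genuine locally integrable function and establishing the sharp anisotropic analogue $K^*(t)\asymp t^{\alpha/n-1}$ of the rearrangement of the classical Riesz kernel. Once this is in place, the passage through the multiplier reduction and the O'Neil/Hardy scheme is formal and essentially identical to the isotropic argument that gave Theorem \ref{ONeil_2}; it is precisely the harmonic--mean choice of $\alpha$ that balances $\widehat K$'s degree of anisotropic homogeneity against the Jacobian $t^{n/\alpha}$ to deliver the correct gain of integrability.
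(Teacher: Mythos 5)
The paper does not actually prove this theorem: it is stated as Lizorkin's result \cite{Liz}, and in the text it is obtained as a corollary of the stronger Lorentz-norm estimate of Theorem \ref{Kol1}, whose proof in \cite{K1998} works directly with rearrangement estimates for $f$ in terms of the partial derivatives and avoids anisotropic kernels altogether. Your route --- reduce to $\widehat f=\widehat K\,\widehat G$ with $\widehat K(\xi)=1/\sum_j(2\pi|\xi_j|)^{\alpha_j}$ via one-dimensional multipliers, then run O'Neil plus Hardy exactly as in the isotropic Theorem \ref{ONeil_2} --- is essentially Lizorkin's original potential-theoretic argument, and if completed it even yields the Lorentz refinement $\|f\|_{p^*,p}\le c\|f\|_{L_p^{\alpha_1,\dots,\alpha_n}}$, i.e.\ the case $1<p<n/\alpha$ of Theorem \ref{Kol1}. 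Steps 1 and 3 are sound: the multiplier $m_j$ does satisfy the one-dimensional Mihlin/Marcinkiewicz condition, only the upper bounds $K^*(t),K^{**}(t)\le ct^{\alpha/n-1}$ are used in Step 3, and your exponent bookkeeping ($1/p^*+\alpha/n=1/p$, Hardy with $\a=p/p^*$) is correct.

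The genuine gap is the one you flag yourself but do not close: Step 2 asserts, rather than proves, that the tempered distribution with symbol $1/P(\xi)$ is a locally integrable function lying in $L^{n/(n-\alpha),\infty}(\R^n)$. The quasi-homogeneity computation only shows $|\{|K|>\lambda\}|=\lambda^{-n/(n-\alpha)}|\{|K|>1\}|$; it gives nothing unless you first establish that $K$ is a measurable function and that $|\{|K|>1\}|<\infty$, i.e.\ that $K$ has an integrable singularity at the origin and is bounded off a bounded set. The subordination formula does not deliver this for free: when some $\alpha_j>2$ the factors $\mathcal{F}^{-1}\bigl(e^{-s(2\pi|\cdot|)^{\alpha_j}}\bigr)$ are \emph{not} nonnegative, so one cannot integrate the product termwise and read off sign or size; one must derive pointwise bounds of the type $|K(x)|\le C\rho(x)^{\alpha-n}$ (with $\rho$ the anisotropic gauge) from decay estimates on each factor. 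This is precisely the technical core of the Besov--Il'in--Nikol'skii/Lizorkin theory of anisotropic kernels and is where the real work lies. Two smaller points that should at least be mentioned: for $p>2$ all Fourier-side identities must be read distributionally, and after showing $P\widehat f=\widehat G$ one must still rule out that $f$ and $K\ast G$ differ by a polynomial (a distribution supported at $\xi=0$); both are routine given $f\in L^p$ and $K\ast G\in L^{p^*,p}$.
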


\vskip 6pt

Applying estimates of rearrangements, we proved \cite{K1998}
 a refinement of this theorem.

\begin{teo}\label{Kol1}  Assume that $1<p<\infty,~~n\ge
1$ or $p=1,~~n\ge 2.$ Let $\a_j>0 ~~(j=1,...,n)$ and let
$$
\a\equiv n\left(\sum_{j=1}^n\frac1{\a_j}\right)^{-1}<\frac np .
$$  Set
$p^*=np/(n-\a p).$ Then for every function $f\in
L_p^{\a_1,...,\a_n}(\R^n)$
\begin{equation}\label{emb1}
||f||_{p^*,p}\le c \sum_{j=1}^n||D_j^{\a_j}f||_p.
\end{equation}
\end{teo}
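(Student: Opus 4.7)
My plan has three stages, combining one-dimensional Bessel representations in each coordinate direction with a rearrangement-difference argument and the Hardy inequalities of Lemma \ref{hardy}.

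\textit{Stage 1: Exploit the one-variable representation in each direction.} For every $j=1,\dots,n$, the hypothesis $f\in \tilde L_{p;j}^{\alpha_j}(\R^n)$ gives
$$
f(x)=\int_\R G_{\alpha_j}(x_j-t)\,D_j^{\alpha_j}f(t,\widehat x_j)\,dt,
$$
so in each coordinate $f$ is a one-dimensional Bessel potential of $D_j^{\alpha_j}f$. The local behavior $G_{\alpha_j}(\tau)\asymp |\tau|^{\alpha_j-1}$ together with exponential decay at infinity means that, for $\widehat x_j$ frozen, the convolution in $x_j$ is controlled by the one-dimensional Riesz potential of $|D_j^{\alpha_j}f(\cdot,\widehat x_j)|$ plus a harmless tail. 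O'Neil's estimate (Theorem \ref{ONeil}) in one variable then yields pointwise rearrangement bounds for the sections of $f$ along each axis.

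\textit{Stage 2: The main step -- an anisotropic pointwise rearrangement inequality.} The goal is to prove
$$
f^*(t)-f^*(2t)\le c\,t^{-1/p^*}\sum_{j=1}^n\left(\int_0^{Ct}\!\bigl[(D_j^{\alpha_j}f)^*(s)\bigr]^p ds\right)^{1/p}.
$$
Set $E_t=\{|f|>f^*(2t)\}$, so $|E_t|\le 2t$. Cover $E_t$ by anisotropic rectangles whose side-length in direction $j$ is of order $t^{\alpha/(\alpha_j n)}$; the scaling is chosen so that the resulting $n$-volume is comparable to $t$ and all $n$ contributions have equal homogeneity under the dilations $x_j\mapsto\lambda^{\alpha/\alpha_j}x_j$ that leave the hypothesis invariant. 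On each rectangle, the Stage~1 one-dimensional Bessel-potential estimate is applied along each axis, with H\"older in the remaining $n-1$ variables absorbing the cross-section. In the isotropic special case the argument reduces to the standard Hedberg-type argument (Lemma \ref{Hedberg_1} and Proposition \ref{Hedberg_2}). For the borderline case $p=1,n\ge 2$, one must avoid the maximal function (which is unbounded on $L^1$); instead a Gagliardo-type slicing inequality along the $n$ directions -- a direct consequence of the one-dimensional Stage 1 bound applied coordinatewise and followed by iterated H\"older -- provides the required replacement.

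\textit{Stage 3: From pointwise differences to the Lorentz norm.} Write $f^*(t)=\sum_{k\ge 0}\bigl(f^*(2^k t)-f^*(2^{k+1}t)\bigr)$, raise the Stage 2 inequality to the $p$-th power, multiply by $t^{p/p^*-1}$, and integrate. After interchanging sum and integral, substituting $\tau=2^k t$, and applying the Hardy inequality of Lemma \ref{hardy} with exponent $\alpha$ matched to $1/p-1/p^*=\alpha/n$, the right-hand side collapses to $c\sum_j\|(D_j^{\alpha_j}f)^*\|_p^p=c\sum_j\|D_j^{\alpha_j}f\|_p^p$, yielding \eqref{emb1}.

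The principal obstacle will be Stage 2. The isotropic Hedberg argument does not transfer verbatim because the natural geometric objects are no longer Euclidean balls but anisotropic parallelepipeds; the exponents $\alpha/\alpha_j$ must be chosen precisely so that the scaling of the one-dimensional Bessel estimate along each axis is compatible with the volume constraint $|E_t|\lesssim t$. The endpoint $p=1,n\ge 2$ is delicate for the same structural reason that the classical Sobolev inequality $\|f\|_{n/(n-1)}\le c\|\nabla f\|_1$ is delicate, and requires the slicing-type variant mentioned above rather than a maximal-function argument.
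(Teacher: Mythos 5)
There is a genuine gap, and it is located exactly where you predicted the difficulty would be, but it is worse than a technical obstacle: the Stage~2 inequality, in the form you display, cannot yield the theorem no matter how Stage~2 is proved. The estimate is exactly scale-invariant under the anisotropic dilations (as it must be), and this forces the exponents into the forbidden endpoint of Hardy's inequality in Stage~3. Concretely: after the dyadic reduction the weight you must integrate is
$$
t^{p/p^*-1}\cdot\Bigl(t^{-1/p^*}\Bigr)^p\int_0^{Ct}\bigl[(D_j^{\a_j}f)^*(s)\bigr]^p\,ds
= t^{-1}\int_0^{Ct}\bigl[(D_j^{\a_j}f)^*(s)\bigr]^p\,ds,
$$
and $\int_0^\infty t^{-1}\int_0^{Ct}[(D_j^{\a_j}f)^*]^p\,ds\,dt$ diverges logarithmically for every nonzero $D_j^{\a_j}f$, since the inner integral tends to $\|D_j^{\a_j}f\|_p^p$. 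Lemma \ref{hardy} requires the weight $t^{-\a-1}$ with $\a>0$; here $\a=0$, and no choice of the power of $t$ in Stage~2 can move you off this endpoint without breaking scale invariance. An inequality of your Stage~2 form yields only the weak estimate $\|f\|_{p^*,\infty}\le c\sum_j\|D_j^{\a_j}f\|_p$ (sum the telescoping series termwise and use $\sum_k 2^{-k/p^*}<\infty$), never the strong Lorentz bound $\|f\|_{p^*,p}$.

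What is actually needed --- and what the argument in \cite{K1998} supplies --- is a difference estimate whose right-hand side is \emph{localized to the dyadic layer}, i.e.\ involves integrals of $|D_j^{\a_j}f|$ over the sets $\{f^*(2t)<|f|\le f^*(t/2)\}$ (or, equivalently, is phrased through moduli of continuity, which decay at both ends of the $t$-axis), so that summing over $t=2^k$ the contributions are essentially disjoint and telescope into $\|D_j^{\a_j}f\|_p^p$ rather than producing a divergent logarithm. Relatedly, your additive combination over $j$ cannot survive the endpoint $p=1$, $n\ge2$: an additive bound of potential type would amount to strong $(1,p^*)$ boundedness of a Riesz-potential-like operator, which fails (the paper itself notes that $I_\a$ is only of weak type on $L^1$). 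In the genuine proof the $n$ directional contributions enter as a \emph{geometric mean} (a Loomis--Whitney/Gagliardo structure), and H\"older's inequality applied to the dyadic sum of products of layer integrals is precisely what produces $\prod_j\|D_j^{\a_j}f\|_p^{1/n}\le c\sum_j\|D_j^{\a_j}f\|_p$. You gesture at "Gagliardo-type slicing" for $p=1$, which is the right instinct, but it contradicts the displayed additive form of Stage~2; the multiplicative structure is not a fallback for the endpoint but the mechanism of the whole theorem. (Note also that the survey does not prove Theorem \ref{Kol1}; it cites \cite{K1998}, whose strategy --- rearrangement-difference estimates plus dyadic summation and Hardy-type inequalities --- is indeed the one you are attempting, so your Stages 1 and 3 are the right skeleton once Stage 2 is corrected.)
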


Emphasize that, in contrast to the  case $n=1$,  for $n\ge 2$ Theorem  \ref{Kol1} is true for $p=1,$
too.

Since $||f||_{p^*}\le c||f||_{p^*,p},$ Theorem \ref{Kol1} provides
a refinement of Theorem \ref{Lizor}. We see also that for Lizorkin
spaces Theorem \ref{Lizor}
 holds in the case when $p=1$, $n\ge 2,$ and all $\a_j$ are non-integer.

Let $\a_1=\cdots=\a_n=\a.$ If $1<p<\infty,$ then
$L_p^{\a,...,\a}=L_p^\a,$ and we obtain embedding
$$
L_p^\a(\R^n)\subset L^{p^*,p}(\R^n)\quad \left(1<p<\frac
n\a,\,\,p^*=\frac{np}{n-\a p}\right)
$$
mentioned above. For $p=1$ it doesn't hold. However, Theorem \ref{Kol1} holds for $p=1,\,\, n\ge 2,$ too.
That is, we have
$$
L_1^{\a,...\a}(\R^n)\subset L^{n/(n-\a),1}(\R^n),\quad 0<\a<n, \,\, n\ge 2.
$$

We proved also estimates of difference norms (embeddings to the
Besov spaces).

Let a function $f$ be given on $\R^n.$ We have already defined the
modulus of continuity $\o^r(f;\delta)_{p}$. Namely, for $r\in
\mathbb N$  and $h\in \mathbb{R}^n$ we set
$$
\Delta^r(h)f(x)=\sum_{i=0}^r (-1)^{r-i}{r\choose i} f(x+ih),
$$
 If
$f\in L^{p}(\mathbb{R}^n),$ then the function
$$
\o^r(f;\delta)_{p}=\sup_{0\le |h|\le \delta}||\Delta^r(h)f||_{p}
$$
is called the {\it  modulus of continuity of order $r$} of the
function $f$  in $L^{p}.$

Now we define the partial moduli of continuity.  Let $r\in \mathbb
N,$ $1\le j\le n,$ and $h\in \mathbb{R}$. Set
$$
\Delta_j^r(h)f(x)=\sum_{i=0}^r (-1)^{r-i}{r\choose i} f(x+ihe_j),
$$
where $e_j$ is the unit coordinate vector in $\mathbb{R}^n.$ If
$f\in L^{p}(\mathbb{R}^n),$ then the function
$$
\o_j^r(f;\delta)_{p}=\sup_{0\le h\le \delta}||\Delta_j^r(h)f||_{p}
$$
is called the {\it partial modulus of continuity} of order $r$ of
the function $f$ with respect to the variable $x_j$ in $L^{p}.$ If
$r=1,$ then we omit the superscript in this notation.

If $f$ has the weak derivative $D_j^rf\in
L^1_{loc}(\mathbb{R}^n),$ then
\begin{equation*}
\Delta_j^r(h)f(x)=\int_0^h\cdots\int_0^hD_j^rf(x+(u_1+...+u_r)e_j)du_1...du_r
\end{equation*}
for almost all $x$.

For $r>0,$ let $\bar r$ be the least integer such that $r\le
\bar{r}.$ We have the following theorem (see \cite{K1988},
\cite{K1993}, \cite{K2001}).

\begin{teo} Let $r_1,...,r_n$ be positive numbers and let
$$
r=n\left(\sum_{j=1}^n\frac{1}{r_j}\right)^{-1}, \,\, 1\le
p<q<\infty, \,\, \varkappa=1-\frac nr\left(\frac 1p-\frac
1q\right)>0.
$$
Set $\a_j=\varkappa r_j \,\,(j=1,...,n).$ If $1<p<\infty$ and
$n\ge 1$, or $p=1$ and $n\ge 2,$ then for  every function $f\in
L^{r_1,...,r_n}_p(\R^n)$
$$
\sum_{j=1}^n\left(\int_0^\infty
[h^{-\a_j}\o_j^{\bar{r}_j}(f;h)_q]^p\frac{dh}{h}\right)^{1/p}\le c
\sum_{j=1}^n||D_j^{r_j}f||_p.
$$
\end{teo}

\vskip 8pt \centerline{\bf Nikolskii-Besov and Lipschitz spaces}

\vskip 8pt

 Let $f\in L^p(\R^n) ~~(1\le p\le
\infty),~~\a>0,$ and $1\le j\le n.$ Let $r$ be the least integer
such that $r>\a.$ The function $f$ belongs to the class
$H_{p;j}^\a(\R^n)$ if
\begin{equation}\label{nik}
||f||_{H_{p;j}^\a}\equiv
||f||_p+\sup_{\d>0}\frac{\o_j^r(f;\d)_p}{\d^{\a}}<\infty.
\end{equation}
Emphasize that if $\a\in \N,$ then in (\ref{nik}) we take the
modulus of continuity of the order $r=\a+1.$

If $\a_j>0 ~~(j=1,...,n)$ and $1\le p\le \infty,$ the Nikol'skii
space $H_p^{\a_1,...,a_n}(\R^n)$ is defined by
$$
H_p^{\a_1,...,a_n}(\R^n)=\bigcap_{j=1}^n H_{p;j}^{\a_j}(\R^n).
$$

Assume now that $\a>0,$ $1\le p,q<\infty,$ and $1\le j\le n.$ As
above, let $r$ be the least integer such that $r>\a.$ A function
$f\in L^p(\R^n)$ belongs to the class $B_{p,q;j}^\a(\R^n)$ if
\begin{equation*}
||f||_{B_{p,q;j}^\a}\equiv||f||_p+\left(\int_0^{\infty}\left(t^{-\a}\o_k^r(f;t)_p\right)^q
\frac{dt}{t}\right)^{1/q}<\infty.
\end{equation*}
 Denote also $B^\a_{p,p;j}\equiv B^\a_{p;j}.$

Let $\a_j>0 ~~(j=1,...,n)$ and $1\le p,q<\infty.$ Then we set
 $$
 B^{\a_1,...,\a_n}_{p,q}({\mathbb R}^n)
=\bigcap_{j=1}^nB^{\a_j}_{p,q;j}({\mathbb R}^n)\quad
(B^{\a_1,...,\a_n}_{p}\equiv B^{\a_1,...,\a_n}_{p,p}).
$$

It is easy to see that
$$
||f||_{H_{p;j}^\a}=\lim_{\t\to +\infty}\|f\|_{B^\a_{p,\t;j}}.
$$
This is why we set, by definition,
$B_{p,\infty;j}^\a(\R^n)=H_{p;j}^\a(\R^n).$

It is also well known that
$$
B^\a_{p,\t;j}\subset B^\a_{p,\eta;j}\quad\mbox{if}\quad
1\le\t<\eta\le\infty.
$$

Stress again that in the definition of the Nikol'skii  space
$H_{p;j}^\a(\R^n)$ the order $r$ of the modulus of continuity is
{\it strictly greater} than the smoothness exponent $\a$. If
$\a\in \N,$ it is also natural to admit the value $r=\a.$ However,
it leads to
 completely different spaces -- Lipschitz type spaces.

 Assume that
$\a>0$ and denote by $\overline{\a}$ the least integer $s\ge \a.$
Let $1\le p<\infty$ and $1\le j\le n.$ Denote by
$\Lambda_{p;j}^\a(\R^n)$ the class of all functions $f\in
L^p(\R^n)$ such that
\begin{equation*}
||f||_{\l_{p;j}^\a}\equiv
\sup_{\d>0}\frac{\o_j^{\overline{\a}}(f;\d)_p}{\d^{\a}}<\infty.
\end{equation*}
Set also $||f||_{\Lambda_{p;j}^\a}=||f||_p+||f||_{\l_{p;j}^\a}.$

Clearly, $||f||_{\Lambda_{p;j}^\a}=||f||_{H_{p;j}^\a}$ if
$\a\not\in \N.$ If $\a\in \N,$ then we have the strict embedding
$\Lambda_{p;j}^\a\subset H_{p;j}^\a.$ Moreover, by the
Hardy--Littlewood theorem, if $\a\in \N,$ then
\begin{equation*}
\Lambda_{p;j}^\a(\R^n)=W_{p;j}^\a(\R^n)\quad\text{for $1<p\le
\infty$}.
\end{equation*}

\vskip 5pt

The strict embedding holds for $1\le p\le \infty$
$$
L_{p;j}^\a(\R^n)\subset \Lambda_{p;j}^\a(\R^n)=H_{p;j}^\a(\R^n),
\quad \a\not\in \N
$$

If $\a\in\N,$ then
$$
L_{1;j}^\a(\R^n)\equiv W_{1;j}^\a(\R^n)\subset
\Lambda_{1;j}^\a(\R^n)
$$
and
$$
L_{p;j}^\a(\R^n)\equiv W_{p;j}^\a(\R^n)=\Lambda_{p;j}^\a(\R^n)
\quad (1<p<\infty).
$$

\vskip 5pt

If $\a_j>0 ~~(j=1,...,n)$ and $1\le p <\infty,$ we set
$$
\Lambda_p^{\a_1,...,\a_n}(\R^n)=\bigcap_{j=1}^n
\Lambda_{p;j}^\a(\R^n).
$$
We shall call $\Lambda_p^{\a_1,...,\a_n}(\R^n)$ a Lipschitz space.

If all $\a_j$ are non-integer, then
$\Lambda_p^{\a_1,...,\a_n}(\R^n)=H_p^{\a_1,...,\a_n}(\R^n).$ If
all $\a_j$ are integer and $1<p<\infty,$ then
$$
\Lambda_p^{\a_1,...,\a_n}(\R^n)=L_p^{\a_1,...,\a_n}(\R^n)=W_p^{\a_1,...,\a_n}(\R^n).
$$
The most interesting  (and the most difficult) case is when among
the numbers $\a_j$ there are integers, but not all of them are
integers. In this case $\Lambda_p^{\a_1,...,\a_n}$ inherits partly
properties of the Sobolev spaces, and partly - properties of the
Nikolskii spaces.

\vskip 5pt

We will discuss  the problem of {\it embedding with limiting
exponent} for Lipschitz classes.

For any $\a_j>0$ and $1\le p\le \infty$ we have the following
embeddings
\begin{equation}\label{relat}
L_p^{\a_1,...,\a_n}(\R^n)\subset
\Lambda_p^{\a_1,...,\a_n}(\R^n)\subset H_p^{\a_1,...,\a_n}(\R^n).
\end{equation}
For $1\le p\le \infty,$ the right embedding in (\ref{relat})
becomes equality if and only if $\a_j\not\in \N$ for all
$j=1,...,n.$ In the left embedding equality takes place if and
only if $1<p\le \infty$ and $\a_j\in N, ~~ j=1,...,n.$

Let $n\ge 2$. Set
$$
\a\equiv n\left(\sum_{j=1}^n\frac1{\a_j}\right)^{-1}.
$$
Assume that $1\le p<\infty$ and $\a<n/p.$ Let $p^*=np/(n-\a p).$
Then
$$
L_p^{\a_1,...,\a_n}(\R^n)\subset L^{q}(\R^n) \quad\text{for all
$p<q\le p^*$}
$$
and
$$
H_p^{\a_1,...,\a_n}(\R^n)\subset L^q(\R^n) \quad\text{for all
$p<q< p^*$},
$$
but for $q=p^*$ the latter embedding does not hold. The problem
arises: what can be said about the embedding
$$
\Lambda_p^{\a_1,...,\a_n}(\R^n)\subset L^{p^*}(\R^n) ?
$$

The first result in this problem was obtained in our work
\cite{K1985}  for $0<\a_j\le 1.$
\begin{teo}\label{predq} Let $1\le p <\infty, ~~0<\a_j\le 1,$ and
$$
\a\equiv n\left(\sum_{j=1}^n\frac1{\a_j}\right)^{-1}<\frac np.
$$
Let $p^*=np/(n-\a p).$ Let $\nu$ be the number of $\a_j$ that are
equal to 1. The embedding
\begin{equation}\label{limq}
\Lambda_p^{\a_1,...,\a_n}(\R^n)\subset L^{p^*}(\R^n)
\end{equation}
holds if and only if
$$
\nu\ge \frac{n}{\a} - p.
$$
\end{teo}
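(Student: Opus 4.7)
The plan is to split the coordinates into the ``Lipschitz directions'' $J=\{j:\a_j=1\}$ (with $|J|=\nu$) and the ``Nikol'skii directions'' $J^c=\{j:\a_j<1\}$, and set $\sigma=\sum_{j\in J^c}1/\a_j$. A short computation shows that the numerical condition $\nu\ge n/\a-p$ is equivalent to $\sigma\le p$. The crucial qualitative fact is that for $j\in J$ the Lipschitz hypothesis is strictly stronger than the Nikol'skii one: by the Hardy--Littlewood characterization recalled just before the statement, it yields a genuine weak derivative $D_jf\in L^p$ when $1<p<\infty$, and it yields a bounded signed measure (BV along $e_j$-lines) when $p=1$. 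For $j\in J^c$ only the Nikol'skii condition $\Lambda_{p;j}^{\a_j}=H_{p;j}^{\a_j}$ is available.

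\textbf{Sufficiency.} The idea is to regularise $f$ in the Nikol'skii directions by iterated one-dimensional averaging and then apply Theorem~\ref{Kol1} in the all-integer case $\b_1=\cdots=\b_n=1$. Concretely, for each $j\in J^c$ introduce $A_j^{h_j}g(x)=h_j^{-1}\int_0^{h_j}g(x+se_j)\,ds$ and set $T_hf=\prod_{j\in J^c}A_j^{h_j}f$ with parameters $h_j>0$ to be optimised. Then $T_hf$ has a weak first derivative in every coordinate, with
\[
\|D_jT_hf\|_p\le\|D_jf\|_p\ (j\in J),\qquad \|D_jT_hf\|_p\le c\,h_j^{\a_j-1}\,[f]_{\Lambda_{p;j}^{\a_j}}\ (j\in J^c),
\]
so $T_hf\in W_p^{1,\ldots,1}(\R^n)$, and Theorem~\ref{Kol1} (valid for $1<p<\infty$ or for $p=1$, $n\ge 2$) gives a Lorentz estimate $\|T_hf\|_{q,p}\le c\sum_j\|D_jT_hf\|_p$ with $q=np/(n-p)$. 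The errors $f-T_hf$ telescope into a sum of terms, each controlled in $L^p$ by $c\,h_j^{\a_j}\,[f]_{\Lambda_{p;j}^{\a_j}}$. Applying Chebyshev to the distribution function of $f$ via $f^*(t)\le(T_hf)^*(t/2)+\sum_{j\in J^c}(\text{error}_j)^*(t/2)$ and then optimising the $h_j$ as functions of the level $t$ yields $\|f\|_{p^*}\le c\|f\|_{\Lambda_p^{\a_1,\ldots,\a_n}}$. The linear system describing the optimal choice of the exponents of $h_j$ is solvable with nonnegative exponents, and the result lands exactly at $p^*$ (rather than at some strictly smaller $q$), precisely when $\sigma\le p$.

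\textbf{Necessity.} To show the condition is sharp I would use a lacunary superposition of anisotropically scaled bumps. Fix $\varphi\in C_0^\infty(\R^n)$ with support in the unit cube and consider, for a lacunary sequence $\mu_k\uparrow\infty$, coefficients $c_k$ to be tuned, and well-separated translates $x_k$,
\[
f_N(x)=\sum_{k=1}^Nc_k\,\varphi\!\left(\mu_k^{1/\a_1}(x_1-x_{k,1}),\ldots,\mu_k^{1/\a_n}(x_n-x_{k,n})\right).
\]
The translates $x_k$ are chosen so that the summands have pairwise disjoint supports, which decouples every seminorm into $\ell^p$-sums of the individual pieces. A direct scaling computation, following the dilation argument used in the Remark after Theorem~\ref{HLS}, gives $\|f_N\|_{p^*}^{p^*}\asymp\sum c_k^{p^*}\mu_k^{-n/\a}$ while each Lipschitz seminorm satisfies $\|f_N\|_{\Lambda_p^{\a_1,\ldots,\a_n}}^{p}\asymp\sum c_k^p\mu_k^{p-n/\a}$. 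Forcing the Lipschitz norm to stay bounded requires $c_k^p\mu_k^{p-n/\a}\in\ell^1$, and the extremal choice then makes $\|f_N\|_{p^*}\to\infty$ exactly when $\sigma>p$.

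\textbf{Main obstacle.} The hardest step is the sufficiency argument hitting the sharp endpoint $p^*$. Reaching $p^*$ rather than any $q<p^*$ requires the multi-parameter optimisation in the $h_j$ to balance simultaneously the $L^{q,p}$-control of $T_hf$ against $n-\nu$ different $L^p$-errors, and the balance condition $\sigma\le p$ is exactly what makes this joint optimisation feasible. The case $p=1$ is the most delicate: Theorem~\ref{Kol1} is already tight there, and in the $J$-directions one only has BV rather than an $L^1$-derivative, so the rearrangement estimate has to be revisited for measure-valued derivatives (which is possible but requires care because the averagings $A_j^{h_j}$ convert BV into an honest $L^1$-derivative at the price of a factor $h_j^{-1}$ that has to be reabsorbed in the optimisation).
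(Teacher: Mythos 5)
Your reduction of the arithmetic condition to $\sigma:=\sum_{j:\a_j<1}1/\a_j\le p$ is correct (since $n/\a=\nu+\sigma$), and the survey itself offers no proof to compare with (it cites \cite{K1985}; the known arguments go through rearrangement estimates of $f^{**}-f^*$, resp.\ Netrusov's integral representations, not through the route you propose). But both halves of your sketch have genuine gaps. For sufficiency: first, the reduction to Theorem \ref{Kol1} with $\b_1=\cdots=\b_n=1$ needs $p<n$, while the hypotheses allow $p\ge n$ (e.g.\ $n=2$, $p=2$, $\a_1=1$, $\a_2=1/2$ satisfies $\sigma\le p$ and $\a<n/p$, but $q=np/(n-p)$ is undefined). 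Second, and more seriously, the ``joint optimisation'' you invoke does not exist in your setup: each $h_j$ enters the bound
$f^*(2t)\le ct^{-1/q}\bigl(A+\sum_j h_j^{\a_j-1}B_j\bigr)+ct^{-1/p}\sum_j h_j^{\a_j}B_j$
through exactly two monomials, so the minimisation decouples and forces $h_j=t^{1/p-1/q}=t^{1/n}$ for every $j$, giving $f^*(t)\lesssim t^{-1/q}A+\sum_{j\in J^c}t^{\a_j/n-1/p}B_j$. The $j$-th error term lies only in weak-$L^{p_j^*}$ with $p_j^*=np/(n-\a_jp)<p^*$ whenever $\a_j<\a$, so this pointwise bound cannot yield $L^{p^*}$; and nowhere in the computation does $\sigma\le p$ appear. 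Reaching the endpoint requires a multi-scale (or truncation/rearrangement) argument in which the condition on $\nu$ actually enters; a single scale $h(t)$ per level $t$ is exactly the argument that proves $H_p^{\a_1,\dots,\a_n}\subset L^q$ for $q<p^*$ and is known to fail at $q=p^*$.

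The necessity argument cannot work as stated, for a structural reason. With pairwise disjointly supported bumps every seminorm decouples into an $\ell^p$-sum, and each individual anisotropically scaled bump satisfies the scale-invariant inequality $\|\varphi_k\|_{p^*}\lesssim[\varphi_k]_{\Lambda_p^{\a_1,\dots,\a_n}}$; hence $\|f_N\|_{p^*}=\bigl(\sum_k\|\varphi_k\|_{p^*}^{p^*}\bigr)^{1/p^*}\le\bigl(\sum_k\|\varphi_k\|_{p^*}^{p}\bigr)^{1/p}\lesssim\|f_N\|_{\Lambda}$, so no choice of $c_k$, $\mu_k$ can make the $L^{p^*}$-norm blow up while the Lipschitz norm stays bounded. (Indeed, plugging the extremal $c_k$ into your own asymptotics gives a convergent series for $\|f_N\|_{p^*}^{p^*}$.) Worse, both of your displayed asymptotics depend on the $\a_j$ only through the harmonic mean $\a$, so they are identical in the case $\sigma\le p$ (where the embedding holds) and $\sigma>p$ (where it fails); by the Remark following the theorem the embedding is \emph{not} determined by $\a$ alone, so no construction with this property can prove necessity. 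A genuine counterexample must superpose many scales at a common location (so that heights add) and must exploit the difference between first differences (used when $\a_j=1$) and higher differences (used when $\a_j<1$) in the distinguished directions; that is where $\nu$ actually enters.
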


\begin{rem} It follows that, in contrast to the Sobolev-Liouville
and Nikol'skii spaces, the embedding
$\Lambda_p^{\a_1,...,\a_n}\subset L^q$ is not uniquely determined
by the value of the harmonic mean $\a.$ Roughly speaking, this
means that for the spaces $\Lambda_p^{\a_1,...,\a_n}$ the
contribution of the variable $x_k$ is not proportional to
$1/\a_k.$
\end{rem}

Theorem \ref{predq} was extended by Netrusov \cite{Net1}
to arbitrary values of $\a_k>0.$ Moreover, Netrusov proved a
theorem on embedding of $\Lambda_p^{\a_1,...,\a_n}$ into Lorentz
spaces. He proposed another approach based on a modification of
the method of integral representations. However, his proof was
also long and complicated, and it did not work in the case $p=1.$
Applying rearrangements, we obtained \cite{K1998}  a new
proof of these results, including the case $p=1.$
\begin{teo}\label{netrusov} Let $n\ge 2$ and  $\a_j>0
~~(j=1,...,n).$ Let
$$
\a= n\left(\sum_{j=1}^n\frac1{\a_j}\right)^{-1}, ~~1\le
p<\frac{n}{\a},\quad \text{and} \quad p^*=\frac{np}{n-\a p}.
$$
Assume that there is an integer among the numbers $\a_j.$ Let
$$
\a'=\left(\sum_{j:\a_j\in \N}
\frac1{\a_j}\right)^{-1}\quad\text{and}\quad s=\frac{n\a'p}{\a}.
$$
Then for every function $f\in \Lambda_p^{\a_1,...,\a_n}(\R^n)$ we
have
\begin{equation}\label{netrus}
||f||_{p^*,s}\le c \sum_{j=1}^n||f||_{\l_{p;j}^{\a_j}}.
\end{equation}
\end{teo}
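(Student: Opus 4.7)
My plan is to follow the rearrangement-based method of \cite{K1998} that was used to establish Theorem \ref{Kol1}, suitably modified to account for the weaker (integer-order) control available from Lipschitz seminorms.

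The cornerstone would be a pointwise rearrangement inequality of the form
$$
t^{1/p}\bigl(f^{**}(t)-f^*(2t)\bigr)\le C\sum_{j=1}^{n}\o_j^{r_j}(f;\d_j)_p,
$$
valid for every $t>0$ and for every choice of positive $\d_j$ with $\d_1\cdots\d_n\ge ct$, where $r_j=\overline{\a_j}$. This is the engine of Kolyada's method: by averaging $r_j$-th order differences in the $x_j$-direction at scale $\d_j$ and covering the level sets of $|f|$ by axis-parallel rectangles of side-lengths $\d_j$, one bounds the local oscillation of the nonincreasing rearrangement by the partial moduli of continuity. The hypothesis $n\ge 2$ at $p=1$ is essential precisely here, exactly as in Theorem \ref{Kol1}, to bypass the failure of the Hardy--Littlewood maximal operator on $L^1$.

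Inserting the defining bound $\o_j^{r_j}(f;\d)_p\le \d^{\a_j}M_j$ with $M_j:=\|f\|_{\l_{p;j}^{\a_j}}$, which holds uniformly whether $\a_j$ is integer or not, yields
$$
t^{1/p}\bigl(f^{**}(t)-f^*(2t)\bigr)\le C\sum_{j=1}^n \d_j^{\a_j}M_j.
$$
Optimizing the $\d_j$ under the constraint $\prod_j\d_j\asymp t$ and using the identity $1/p^*=1/p-\a/n$, one obtains a bound homogeneous of degree $-1/p^*$ in $t$, which already delivers the embedding into $L^{p^*,\infty}$. To upgrade to the sharp Lorentz index $s=n\a'p/\a$, I would split the coordinate indices into the integer group $J_{\mathrm{int}}=\{j:\a_j\in\N\}$ and the fractional group. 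For $j$ fractional, the modulus $\o_j^{r_j}$ with $r_j>\a_j$ admits an integrated Hardy-type estimate (Lemma \ref{hardy}) and contributes to the Lorentz index with exponent $p$; for $j\in J_{\mathrm{int}}$, only the pointwise Lipschitz control $\d^{\a_j}M_j$ is available, contributing with exponent $\infty$. A H\"older-type combination with weights proportional to $1/\a_j$ then produces exactly the harmonic-mean exponent $s=n\a'p/\a$, since $\a'$ is by definition the harmonic mean of the $\a_j$ over the integer indices.

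The main technical obstacle is implementing this splitting so that the final Lorentz index is sharp. The essential input is a Hardy-type inequality accommodating mixed strong/weak integrabilities across the coordinate directions; for the quasi-decreasing function $f^*$ one would invoke Lemma \ref{hardy} for the fractional coordinates and, if secondary-index issues arise, the variant Lemma \ref{hardy-type}. A subsidiary difficulty is the endpoint $p=1$, where the maximal-function step used implicitly in the $L^p$-averaging of differences is unavailable; this is again circumvented by the Fubini-type averaging permitted by $n\ge 2$. Once these technicalities are handled, the entire argument reduces to the balancing of the $\d_j$ together with a one-variable integration assembling the Hardy estimates into the target Lorentz norm $\|f\|_{p^*,s}$.
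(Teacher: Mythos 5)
Your plan founders at its first step. The ``cornerstone'' you propose --- a direct bound of $t^{1/p}\bigl(f^{**}(t)-f^*(2t)\bigr)$ by $\sum_{j}\o_j^{r_j}(f;\d_j)_p$ over all choices of $\d_j$ with $\prod_j\d_j\asymp t$ --- is not an available tool: the paper states explicitly, in the closing remark of this very section, that \emph{direct} estimates of rearrangements in terms of partial moduli of continuity are unknown, and poses finding them as an open problem. This is precisely why the known proofs are long: Netrusov's argument proceeds by a modification of the method of integral representations, and the rearrangement proof in \cite{K1998} does not estimate $f^{**}-f^*$ by the partial moduli directly, but by more elaborate intermediate quantities from which the Lorentz estimate is then assembled. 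So the engine of your argument is an unproved conjecture rather than a lemma, and everything downstream (the weak-type bound, the upgrade to index $s$) has no foundation.

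Second, even granting that engine, your mechanism for producing the sharp secondary index $s=n\a'p/\a$ assigns the roles of the two groups of indices backwards. Since $1/s=(\a/(np))\sum_{j:\,\a_j\in\N}1/\a_j$, it is the \emph{integer} directions that must contribute the strong exponent $p$ and the fractional ones the exponent $\infty$; under your assignment the all-integer case would yield $s=\infty$, whereas the theorem (and the classical embedding $W_p^{\a_1,\dots,\a_n}\subset L^{p^*,p}$) gives $s=p$. The point you have inverted is this: for fractional $\a_j$ one has $\Lambda_{p;j}^{\a_j}=H_{p;j}^{\a_j}$, so those directions carry only Nikol'skii-type (weak) information and alone can never do better than $L^{p^*,\infty}$; it is for integer $\a_j$, where the seminorm uses differences of order \emph{exactly} $\a_j$, that the hypothesis is genuinely stronger than the Nikol'skii condition, and it is exactly this surplus that lowers the Lorentz index. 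Your proposed Hardy-inequality step for the ``fractional group'' has nothing to feed on, since the $\Lambda$-seminorm is a supremum in $\d$, not an integrated quantity.
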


It was also proved by Netrusov that the index $s$ in this theorem
can not be replaced by a smaller one. Note that for a given value
of the mean index $\a,$ the bigger is the number of the integers
among $\a_j$ the smaller is the index $s.$ If there are no
integers $\a_j$ at all, then $s=\infty.$ In the other extreme
case, if all $\a_j$ are integers, we have $s=p$ and Theorem
\ref{netrusov} coincides with embedding theorem with limiting
exponent for anisotropic Sobolev spaces $W_p^{\a_1,...,\a_n}$
\cite{K1993}.

If $0<\a_j\le 1$ for all $j=1,...,n,$ then we have $s=np/(\nu\a),$
where $\nu$ is the number of $\a_j$ that are equal to 1. We have
$s\le p^*$ if and only if $\nu\ge n/\a-p.$ This is exactly the
necessary and sufficient condition for the embedding (\ref{limq})
(see Theorem \ref{predq}).

We observe that  {\it direct} estimates of rearrangements in terms of partial moduli of continuity are unknown;
it would be very interesting to find such estimates (see \cite{K1998}, \cite{K2007}).

\vskip 4pt

\section{Fourier transforms}

\vskip 4pt

\centerline{\bf Definition of the Fourier transform}

\vskip 4pt

Let $f$ be a function in $L^1(\R^n).$ The Fourier transform of $f$
is defined by
$$
\widehat{f}(\xi)=\int_{\R^n}f(x)e^{-i2\pi x\cdot \xi}\,dx, \quad
\xi\in \R^n.
$$

The Convolution theorem states that if $f,g\in L^1(\R^n),$ then
$$\widehat{f\ast g}(\xi)=\widehat{f}(\xi)\widehat{g}(\xi).$$

\vskip 4pt We have also the classical Plansherel's theorem.

\begin{teo}\label{Plansherel} If $f\in L^1(\R^n)\cap L^2(\R^n),$
then $\widehat{f}\in L^2(\R^n),$ and $||\widehat{f}||_2=||f||_2.$
\end{teo}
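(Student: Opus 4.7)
The plan is to prove the identity by Gaussian regularization combined with the multiplication formula $\int \widehat{u}\,v = \int u\,\widehat{v}$, which for $u,v\in L^1(\R^n)$ is a direct consequence of Fubini. First I would set $g(x)=\overline{f(-x)}$, so that $g\in L^1(\R^n)\cap L^2(\R^n)$ with $\|g\|_2=\|f\|_2$, and a short computation from the definition of $\widehat{\,\cdot\,}$ gives $\widehat{g}=\overline{\widehat{f}}$. Put $h=f\ast g$: Theorem \ref{Young} (with $p=q=r=1$) gives $h\in L^1(\R^n)$, the Convolution Theorem stated above yields $\widehat{h}=\widehat{f}\,\widehat{g}=|\widehat{f}|^2\ge 0$, and because $f,g\in L^2(\R^n)$ the function $h$ is bounded and continuous on $\R^n$, with
$$
h(0)=\int_{\R^n} f(z)\,\overline{f(z)}\,dz=\|f\|_2^2
$$
(boundedness and continuity at the origin follow from Cauchy--Schwarz together with $L^2$-continuity of translations).

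Next I would introduce the Gaussian $\f_t(\xi)=e^{-\pi t|\xi|^2}$ for $t>0$, whose Fourier transform is $G_t(x)=t^{-n/2}e^{-\pi|x|^2/t}$, a standard approximation of the identity satisfying $\int_{\R^n} G_t=1$. Since both $h$ and $\f_t$ lie in $L^1(\R^n)$, the multiplication formula yields
$$
\int_{\R^n}|\widehat{f}(\xi)|^2 e^{-\pi t|\xi|^2}\,d\xi=\int_{\R^n}\widehat{h}(\xi)\f_t(\xi)\,d\xi=\int_{\R^n} h(x)G_t(x)\,dx.
$$
Both sides are finite for each fixed $t>0$: the left because $\widehat{f}$ is bounded (as $f\in L^1$) while $\f_t\in L^1$, the right because $h$ is bounded while $G_t\in L^1$.

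Finally I would let $t\to 0^+$. Monotone convergence on the left gives
$$
\int_{\R^n}|\widehat{f}(\xi)|^2 e^{-\pi t|\xi|^2}\,d\xi\;\longrightarrow\;\int_{\R^n}|\widehat{f}(\xi)|^2\,d\xi=\|\widehat{f}\|_2^2\in[0,+\infty],
$$
while on the right, since $h$ is bounded and continuous at the origin and $G_t$ concentrates at $0$ with $\int G_t=1$, the usual approximation-of-identity estimate gives $\int h(x)G_t(x)\,dx\to h(0)=\|f\|_2^2$. Equating the two limits yields $\|\widehat{f}\|_2^2=\|f\|_2^2<\infty$, which simultaneously proves $\widehat{f}\in L^2(\R^n)$ and the isometry. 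The main obstacle is essentially book-keeping rather than analysis: one must cleanly justify the multiplication formula via Fubini and check the continuity of $h$ at the origin; neither is deep, but each carries a piece of the argument, and replacing the Gaussian by another mollifier would require a parallel explicit Fourier computation.
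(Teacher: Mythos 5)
Your proof is correct. Note that the paper itself offers no proof of this statement: it is a survey, and Plancherel's theorem is quoted there as a classical fact, so there is no argument of the author's to compare yours against. What you give is the standard Gaussian-regularization proof, and all the pieces fit together: $g(x)=\overline{f(-x)}$ gives $\widehat g=\overline{\widehat f}$, Young's inequality with $p=q=r=1$ puts $h=f\ast g$ in $L^1$ so that $\widehat h=|\widehat f|^2$ makes sense, and the Cauchy--Schwarz/translation-continuity argument shows the convolution of two $L^2$ functions has a bounded continuous representative with $h(0)=\|f\|_2^2$ (which agrees a.e.\ with the $L^1$ convolution, so the identity $\int h\,G_t=\int \widehat h\,\f_t$ is unaffected by the choice of representative). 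The multiplication formula is legitimate by Fubini since both $h$ and $\f_t$ are integrable, monotone convergence handles the left-hand limit because $e^{-\pi t|\xi|^2}$ increases to $1$ as $t\downarrow 0$, and the approximate-identity estimate handles the right. The one computation worth writing out explicitly in a final version is the Fourier transform of the Gaussian, $\widehat{\f_t}(x)=t^{-n/2}e^{-\pi|x|^2/t}$ with the paper's normalization $\widehat f(\xi)=\int f(x)e^{-2\pi i x\cdot\xi}\,dx$; with that in place the argument is complete.
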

\vskip 4pt

Thus, the map $f\mapsto \widehat{f}$ is a bounded linear operator
defined on a dense subset $L^1\cap L^2$ of the space $L^2(\R^n);$
moreover, it is an isometry. If $f\in L^2(\R^n)$ (but $f\not\in
L^1(\R^n),$ there exists a sequence $\{f_k\}$ of functions in $
L^1(\R^n)\cap L^2(\R^n)$ such that $||f-f_k||_2 \to 0.$ We have
$$
||\widehat{f}_j-\widehat{f}_k||_2=||f_j-f_k||_2
$$
and hence $\{\widehat{f}_k\}$ is a Cauchy sequence in $L^2(\R^n)$.
Thus, $\{\widehat{f}_k\}$ converges to some function in
$L^2(\R^n)$, which we call $\widehat{f}.$ Obviously, $\widehat{f}$
does not depend on the choice of a sequence $\{f_k\}$. Moreover,
$$
||\widehat{f}||_2=\lim_{k\to \infty}||\widehat{f}_k||_2=\lim_{k\to
\infty}||f||_2=||f||_2.
$$

We observe also that $f\mapsto \widehat{f}$ is not only an
isometry but it is a {\it unitary transformation} of $L^2(\R^n)$
onto itself. It follows from the Fourier inversion theorem.
\begin{teo}\label{Inversion}
For any $f\in L^2(\R^n)$
$$
f(x)=\lim_{k\to\infty}\int_{|\xi|\le k}\widehat{f}(\xi)e^{i2\pi
x\cdot \xi}\,d\xi \quad\mbox({convergence~\, in}\quad L^2(\R^n)).
$$

\end{teo}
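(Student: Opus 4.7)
The plan is to prove the inversion formula first on the Schwartz class $\mathcal{S}(\R^n)$ using a Gaussian regularization, and then extend to $L^2$ by a density argument driven by Theorem \ref{Plansherel}. For $f\in L^2(\R^n)$ and $k\in \N$, the Cauchy--Schwarz inequality gives $\widehat{f}\chi_{B(0,k)}\in L^1(\R^n)\cap L^2(\R^n)$, so
$$
S_kf(x)=\int_{|\xi|\le k}\widehat{f}(\xi)e^{i2\pi x\cdot\xi}\,d\xi
$$
is an ordinary $L^1$ integral; the goal is to show $S_kf\to f$ in $L^2$.

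For pointwise inversion on $\mathcal{S}$, given $\varphi\in\mathcal{S}$ we introduce the regularization
$$
\varphi_t(x)=\int \widehat{\varphi}(\xi)e^{-\pi t|\xi|^2}e^{i2\pi x\cdot\xi}\,d\xi, \qquad t>0.
$$
Fubini's theorem together with the self-duality identity $\int e^{-\pi t|\xi|^2}e^{i2\pi z\cdot\xi}\,d\xi = t^{-n/2}e^{-\pi|z|^2/t}$ yields $\varphi_t = \varphi\ast K_t$ with $K_t(z)=t^{-n/2}e^{-\pi|z|^2/t}$ and $\int K_t=1$. The approximate-identity property together with continuity of $\varphi$ gives $\varphi_t\to \varphi$ uniformly as $t\to 0^+$, while dominated convergence (using $\widehat{\varphi}\in L^1$) gives $\varphi_t(x)\to \int\widehat{\varphi}(\xi)e^{i2\pi x\cdot\xi}\,d\xi$ pointwise. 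Comparing these two limits yields the pointwise inversion $\varphi(x)=\int\widehat{\varphi}(\xi)e^{i2\pi x\cdot\xi}\,d\xi$ on $\mathcal{S}$.

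To extend to $L^2$, define $(\mathcal{F}^{-1}g)(x)=\widehat{g}(-x)$; by Theorem \ref{Plansherel} this is an $L^2$-isometry, and on $L^1\cap L^2$ it coincides with $\int g(\xi)e^{i2\pi x\cdot\xi}\,d\xi$. Since $\widehat{f}\chi_{B(0,k)}\to\widehat{f}$ in $L^2$ by dominated convergence, the isometry gives $S_kf = \mathcal{F}^{-1}(\widehat{f}\chi_{B(0,k)})\to \mathcal{F}^{-1}\widehat{f}$ in $L^2$. It remains to identify $\mathcal{F}^{-1}\widehat{f}$ with $f$: pick $\varphi_j\in\mathcal{S}$ with $\varphi_j\to f$ in $L^2$; then $\widehat{\varphi}_j\to\widehat{f}$ in $L^2$ by Plancherel, so by the isometry $\mathcal{F}^{-1}\widehat{\varphi}_j\to\mathcal{F}^{-1}\widehat{f}$, while Schwartz inversion gives $\mathcal{F}^{-1}\widehat{\varphi}_j=\varphi_j\to f$. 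Hence $\mathcal{F}^{-1}\widehat{f}=f$, which is the theorem.

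The only step that requires genuine computation is the Gaussian self-duality identity together with the verification that $\{K_t\}$ behaves as an approximate identity; once these are in hand, the extension from $\mathcal{S}$ to $L^2$ is a routine density argument built on dominated convergence and the Plancherel isometry.
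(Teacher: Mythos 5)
Your proof is correct. Note that the paper states Theorem \ref{Inversion} without any proof --- it is quoted as the classical fact from which the unitarity of the Fourier transform on $L^2(\R^n)$ follows --- so there is no argument in the text to compare against; your write-up supplies the standard one. The structure is sound at every step: Cauchy--Schwarz makes $\widehat f\chi_{B(0,k)}$ an element of $L^1\cap L^2$ so that $S_kf$ is a genuine integral; the Fubini interchange in the Gaussian regularization is justified by $\varphi\in L^1$ together with integrability of $e^{-\pi t|\xi|^2}$; the two evaluations of $\lim_{t\to 0^+}\varphi_t$ (approximate identity on one side, dominated convergence with $\widehat\varphi\in L^1$ on the other) give pointwise inversion on $\mathcal S$; and the passage to $L^2$ correctly combines dominated convergence for $\widehat f\chi_{B(0,k)}\to\widehat f$ with the Plancherel isometry of Theorem \ref{Plansherel} and the density of $\mathcal S$. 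The only points you leave implicit are routine: that $\widehat\varphi\in\mathcal S$ (hence in $L^1$) for $\varphi\in\mathcal S$, and that the $L^2$-extension of the Fourier transform agrees with the integral formula on $L^1\cap L^2$ --- the latter is exactly the consistency built into the paper's own construction of $\widehat f$ for $f\in L^2$, so invoking it is legitimate.
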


Thus, $f$ is the Fourier transform of $g(\xi)= \widehat{f}(-\xi).$

\vskip 4pt

We have defined the Fourier transform for functions in $L^1(\R^n)$
and functions in $L^2(\R^n)$. Now, let $f\in L^1(\R^n)+
L^2(\R^n),$ that is,
\begin{equation}\label{decomp}
f=f_1+f_2,\quad f_1\in L^1(\R^n), \,\,\, f_2\in L^2(\R^n).
\end{equation}
Set $\widehat{f}=\widehat{f_1}+\widehat{f_2}.$ In the case $f\in
L^1\cap L^2$ this definition coincides with the previous one. It
is easy   to show that definition does not depend on the choice of
decomposition (\ref{decomp}). Since $L^p(\R^n)\subset L^1(\R^n)+
L^2(\R^n)$ for $1\le p\le 2,$ the Fourier transform is defined for
all $f\in L^p(\R^n),\,\,\, 1\le p\le 2.$

\vskip 8pt

\centerline{\bf The Hausdorff -- Young inequality}

\vskip 8pt
\begin{teo}\label{Haus-Young}
Let $f\in L^p(\R^n),\,\,\, 1\le p\le 2.$ Then
\begin{equation}\label{haus-young}
||\widehat{f}||_{p'}\le ||f||_p.
\end{equation}
\end{teo}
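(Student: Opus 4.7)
The plan is to prove the Hausdorff--Young inequality by complex interpolation between the two natural endpoints $p=1$ and $p=2$, using the Riesz--Thorin interpolation theorem applied to the linear operator $T \colon f \mapsto \widehat{f}$.

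First I would establish the two endpoint estimates. For $p=1$, the bound $\|\widehat{f}\|_\infty \le \|f\|_1$ follows at once from the defining integral: for every $\xi \in \mathbb{R}^n$,
\[
|\widehat{f}(\xi)| \le \int_{\mathbb{R}^n} |f(x)|\,dx = \|f\|_1.
\]
For $p=2$, we already have the isometry $\|\widehat{f}\|_2 = \|f\|_2$ from Plancherel's theorem (Theorem \ref{Plansherel}). In operator terms, $T$ is bounded from $L^1$ to $L^\infty$ with norm at most $1$, and from $L^2$ to $L^2$ with norm equal to $1$.

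Next I would interpolate. Given $p \in [1,2]$, choose $\theta \in [0,1]$ by $\theta = 2/p'$, equivalently
\[
\frac{1}{p} = \frac{1-\theta}{1} + \frac{\theta}{2}, \qquad \frac{1}{p'} = \frac{1-\theta}{\infty} + \frac{\theta}{2}.
\]
The Riesz--Thorin interpolation theorem (applied to $T$, which is linear and well-defined on $L^1 + L^2 \supset L^p$ by the discussion preceding the statement) then yields
\[
\|T\|_{L^p \to L^{p'}} \le \|T\|_{L^1 \to L^\infty}^{1-\theta}\, \|T\|_{L^2 \to L^2}^{\theta} \le 1,
\]
which is exactly \eqref{haus-young}.

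There is essentially no obstacle beyond invoking Riesz--Thorin: the two endpoint inequalities are immediate, and the consistency of the definition of $\widehat{f}$ on $L^1 + L^2$ was already discussed in the excerpt, so the hypotheses of the interpolation theorem are met. The only mild subtlety is verifying that the action of $T$ on the complex interpolation family coincides with the Fourier transform defined via decomposition \eqref{decomp}, but this is immediate since both $L^1 \cap L^2$ and simple functions are dense in every intermediate $L^p$, and the two definitions agree on $L^1 \cap L^2$.
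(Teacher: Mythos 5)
Your proof is correct, but it is worth noting that the paper does not actually carry out a full proof of Theorem \ref{Haus-Young}: it only reproduces Young's original argument for the special case where $p'$ is an even integer, e.g.\ for $p=4/3$, $p'=4$ one writes $\|\widehat{f}\|_4^4=\|\widehat{f\ast f}\|_2^2=\|f\ast f\|_2^2\le\|f\|_{4/3}^4$ by Plancherel and the convolution inequality, and then simply attributes the general case to Hausdorff and to M.~Riesz's convexity theorem. Your Riesz--Thorin argument --- interpolating between the trivial bound $\|\widehat{f}\|_\infty\le\|f\|_1$ and the Plancherel isometry with $\theta=2/p'$ --- is precisely the M.~Riesz route the paper alludes to but does not execute, and it is the standard complete proof covering every $p\in[1,2]$ at once. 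The trade-off is clear: Young's convolution trick is entirely elementary but reaches only the discrete family $p'=2,4,6,\dots$, while your approach requires the interpolation theorem as a black box but settles the full range. Your closing remark about the consistency of the two definitions of $\widehat{f}$ on $L^1+L^2$ versus the density extension from simple functions is the right point to flag, and your resolution (agreement on $L^1\cap L^2$ plus density) is adequate.
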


This theorem was proved by W. Young in 1913 in the case when $p'$
is an even integer. Namely, Young observed that in this case the
Fourier transform inequality can be obtained from the convolution
inequality. For example, for $p=4/3,\,\, p'=4$
$$
\int_{\R^n}|\widehat{f}(\xi)|^4
d\xi=\int_{\R^n}|\widehat{f}(\xi)^2|^2 d\xi
$$
$$
=\int_{\R^n}|\widehat{f\ast f}(\xi)|^2 d\xi=||f\ast f||_2^2\le
||f||_{4/3}^4.
$$

In 1923 Hausdorff proved inequality (\ref{haus-young}) for all
$p\in [1,2].$

In the periodic case the Hausdorff -- Young theorem states the
following.

\begin{teo}\label{Periodic} Let $f\in L^p[0,2\pi],$  $\,1\le p\le
2,$ and
$$
c_n=\frac{1}{2\pi}\int_0^{2\pi} f(x)e^{-inx}dx\quad (n\in \Z).
$$
Then
\begin{equation}\label{periodic}
\left(\sum_{n\in \Z} |c_n|^{p'}\right)^{1/p'}\le
\left(\frac{1}{2\pi}\int_0^{2\pi} |f(x)|^p\,dx\right)^{1/p}.
\end{equation}

\end{teo}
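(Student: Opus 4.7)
The plan is to obtain Theorem~\ref{Periodic} as an immediate application of the Riesz--Thorin interpolation theorem between the trivial endpoints $p=1$ and $p=2$, in direct parallel to the standard derivation of the non-periodic Hausdorff--Young inequality (Theorem~\ref{Haus-Young}).

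Endow $[0,2\pi]$ with the normalized Lebesgue measure $d\mu=(2\pi)^{-1}\,dx$ and $\mathbb{Z}$ with counting measure, and let $T$ denote the linear operator sending $f$ to its sequence of Fourier coefficients $\{c_n\}_{n\in\mathbb{Z}}$. First I would verify the two endpoint bounds. At $p=1$ the trivial estimate $|c_n|\le\int_{[0,2\pi]}|f|\,d\mu$ valid for every $n\in\mathbb{Z}$ shows that $T:L^1(d\mu)\to\ell^\infty(\mathbb{Z})$ is bounded with norm at most $1$. At $p=2$ the system $\{e^{inx}\}_{n\in\mathbb{Z}}$ is orthonormal in $L^2(d\mu)$, and Parseval's identity (the periodic counterpart of Theorem~\ref{Plansherel}) yields $\|Tf\|_{\ell^2}=\|f\|_{L^2(d\mu)}$, so $T:L^2(d\mu)\to\ell^2(\mathbb{Z})$ is an isometry.

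Next I would apply Riesz--Thorin to $T$ on the couples $(L^1(d\mu),L^2(d\mu))$ and $(\ell^\infty(\mathbb{Z}),\ell^2(\mathbb{Z}))$ with interpolation parameter $\theta\in[0,1]$ determined by $1/p=(1-\theta)+\theta/2$, equivalently $\theta=2/p'$. The interpolated operator norm is at most $1^{1-\theta}\cdot 1^{\theta}=1$, so $T$ is bounded from $L^p(d\mu)$ into $\ell^{p'}(\mathbb{Z})$ with norm at most $1$; unwinding the definitions is exactly the inequality (\ref{periodic}).

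There is no real obstacle here beyond bookkeeping: if one were to work with un-normalized Lebesgue measure, the two endpoint constants would differ and one would have to track the resulting $(2\pi)$-factors through the interpolation to recover constant $1$. An alternative route, following Young's original idea as in the comment after Theorem~\ref{Haus-Young}, would be to treat the even-integer cases $p'=2k$ first via $\sum_n|c_n|^{2k}=\bigl\|f\ast\cdots\ast f\bigr\|_{L^2(d\mu)}^2$ together with the periodic Young convolution inequality, but this only delivers a discrete family of exponents and one still needs an interpolation argument \`a la Hausdorff to fill in the remaining $p\in(1,2)$. The Riesz--Thorin route is the cleanest.
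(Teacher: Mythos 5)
Your proof is correct: the two endpoint bounds $T:L^1(d\mu)\to\ell^\infty$ (norm $\le 1$) and $T:L^2(d\mu)\to\ell^2$ (Parseval) are exactly right, and Riesz--Thorin with $\theta=2/p'$ gives $L^p(d\mu)\to\ell^{p'}$ with constant $1$, which is (\ref{periodic}). The paper states this theorem without proof, only noting that M.~Riesz obtained it as an application of his convexity theorem --- which is precisely the interpolation argument you carry out --- so your approach coincides with the classical one the survey alludes to.
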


In 1923 F. Riesz extended this theorem to an arbitrary uniformly
bounded orthonormal system. In 1926 M. Riesz gave an alternative
proof of this result as one of applications of his convexity
theorem.

For $p>2$ Theorems \ref{Haus-Young} and \ref{Periodic} fail. For
example, let $p>2$ and let a sequence $\{c_k\}$ of positive
numbers be such that
$$
\sum_{k=1}^\infty c_k^2<\infty, \quad\mbox{but} \quad
\sum_{k=1}^\infty c_k^{p'}=\infty.
$$
Then the function
$$
f(x)=\sum_{k=1}^\infty c_k \cos 2^k x
$$
belongs to $L^p[0,2\pi]$, but inequality (\ref{periodic}) doesn't
hold.

\vskip 4pt

Inequality (\ref{periodic}) is sharp for all $1\le p\le 2.$
Indeed, it becomes equality  for functions $f(x)=Ae^{i2\pi mx}.$
Hardy and Littlewood proved that equality in (\ref{periodic}) is
attained only for such exponential functions.

However, inequality (\ref{haus-young}) for $1<p<2$  can be
improved. First it was proved by K. Babenko \cite{Bab} in 1961 for
$p'=2,4,6,....$ For all $p\in (1,2)$ Beckner \cite{Beckner} in
1975 proved the inequality
\begin{equation}\label{bruc}
||\widehat{f}||_{p'}\le A_p^n||f||_p,
\end{equation}
where
$$
A_p=\left(\frac{p^{1/p}}{(p')^{1/p'}}\right)^{1/2}
$$

For the gaussian $\operatorname{exp(-\pi |x|^2})$ (\ref{bruc})
becomes equality.

\vskip 8pt

\centerline{\bf Hardy -- Littlewood -- Paley inequality}

\vskip 8pt

First, the following Hardy -- Littlewood type theorem holds.

\begin{teo}\label{Trans_1} If $f\in L^p(\R^n),\,\,\, 1< p\le 2,$
then
\begin{equation}\label{trans_1}
\left(\int_{\R^n} |\xi|^{n(p-2)}
|\widehat{f}(\xi)|^pd\xi\right)^{1/p}\le c||f||_p.
\end{equation}
\end{teo}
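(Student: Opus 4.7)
The plan is to reduce (\ref{trans_1}) to a Lorentz-space Fourier bound of the form $\|\widehat f\|_{p',p}\le c\,\|f\|_p$ and then establish the latter by real interpolation between the trivial $L^1\to L^\infty$ estimate and Plancherel's theorem. The case $p=2$ is already Theorem~\ref{Plansherel} (the weight is identically~$1$), so the work is in the range $1<p<2$.

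First I would show that the left-hand side of (\ref{trans_1}) is controlled by $\|\widehat f\|_{p',p}^{\,p}$. Since $p-2<0$, the weight $W(\xi)=|\xi|^{n(p-2)}$ is spherically symmetric and nonincreasing in $|\xi|$, so $W_s^*=W$. Applying the Hardy--Littlewood rearrangement inequality to the pair $W$ and $|\widehat f|^p$, and using that rearrangement commutes with monotone transformations, one obtains
\[
\int_{\R^n}|\xi|^{n(p-2)}|\widehat f(\xi)|^p\,d\xi\le\int_{\R^n}|\xi|^{n(p-2)}\bigl((\widehat f)_s^*(\xi)\bigr)^p\,d\xi.
\]
Passing to polar coordinates and substituting $t=v_n|\xi|^n$ converts the right-hand side into a constant multiple of $\int_0^\infty t^{p-2}\bigl((\widehat f)^*(t)\bigr)^p\,dt=\|\widehat f\|_{p',p}^{\,p}$, which reduces the theorem to the Lorentz bound.

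Next I would prove $\|\widehat f\|_{p',p}\le c\,\|f\|_p$ by real interpolation. The Fourier transform $\mathcal F$ is bounded from $L^1$ to $L^\infty$ with norm at most one and isometric from $L^2$ to $L^2$ by Theorem~\ref{Plansherel}. Choosing $\theta=2/p'\in(0,1)$, the standard identification of real interpolation spaces on the Lorentz scale (see \cite[Ch.~5]{BS}) yields
\[
(L^1,L^2)_{\theta,p}=L^{p,p}=L^p,\qquad (L^\infty,L^2)_{\theta,p}=L^{p',p},
\]
so the real interpolation theorem gives $\mathcal F:L^p\to L^{p',p}$ boundedly with a constant depending only on $p$. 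Combined with the rearrangement reduction, this proves (\ref{trans_1}).

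The main obstacle is securing the correct \emph{secondary} Lorentz index in the target. A crude Marcinkiewicz decomposition $f=g_\tau+h_\tau$ at the level $y=f^*(\tau)$, combined with the pointwise bounds $(\widehat{g_\tau})^*(s)\le s^{-1/2}\|g_\tau\|_2$ and $(\widehat{h_\tau})^*(s)\le\|h_\tau\|_1$ followed by optimization in $\tau$, delivers only the weak-type inequality $\|\widehat f\|_{p',\infty}\le c\|f\|_p$; this is strictly weaker than what is needed since $L^{p',\infty}\not\subset L^{p',p}$ for $1<p<2$. The improvement rests on the Lorentz form of real interpolation, in which the secondary exponent on both sides is a free parameter: choosing it equal to $p$ forces the source to coincide with $L^p=L^{p,p}$ while automatically delivering the target $L^{p',p}$, which is precisely the strengthening of Hausdorff--Young recorded in (\ref{trans_1}).
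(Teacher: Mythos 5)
Your proof is correct and follows essentially the same route as the paper: the weighted inequality is reduced, via the Hardy--Littlewood rearrangement inequality and the fact that the weight $|\xi|^{n(p-2)}$ coincides with its own symmetric decreasing rearrangement, to the Lorentz bound $\|\widehat f\|_{p',p}\le c\|f\|_p$ of Theorem \ref{Trans_2}. The only difference is that you also supply a proof of that Lorentz bound by real interpolation between $L^1\to L^\infty$ and $L^2\to L^2$ with secondary index $p$, which the paper leaves to the references (it is the same mechanism it attributes to Herz in connection with Theorem \ref{p-r}).
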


A stronger inequality is given by the Hardy -- Littlewood -- Paley
theorem.

\begin{teo}\label{Trans_2}  If $f\in L^p(\R^n),\,\,\, 1< p\le 2,$
then
\begin{equation}\label{trans_2}
\left(\int_0^\infty t^{p-2} \widehat{f}^*(t)^p dt\right)^{1/p}\le
c||f||_p.
\end{equation}
\end{teo}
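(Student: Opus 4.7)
The plan is to recognize \eqref{trans_2} as the Lorentz-space statement $\|\widehat f\|_{p',p}\le c\|f\|_p$, since
$$
\|g\|_{p',p}^p=\int_0^\infty\bigl(t^{1/p'}g^*(t)\bigr)^p\frac{dt}{t}=\int_0^\infty t^{p-2}g^*(t)^p\,dt.
$$
So the theorem says precisely that the Fourier transform sends $L^p$ into the Lorentz space $L^{p',p}$. This is deduced by interpolating the two endpoint bounds already at our disposal: $\|\widehat f\|_\infty\le\|f\|_1$ (immediate from the definition) and $\|\widehat f\|_2=\|f\|_2$ (Plancherel, Theorem~\ref{Plansherel}). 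At the level of Lorentz spaces, Marcinkiewicz interpolation between strong types $(1,\infty)$ and $(2,2)$ produces $\mathcal{F}\colon L^{p,r}\to L^{p',r}$ for every $1\le r\le\infty$; setting $r=p$ and using $L^{p,p}=L^p$ yields \eqref{trans_2} at once.

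To avoid invoking the general interpolation theorem, one can make the argument explicit via the Marcinkiewicz decomposition, phrased in terms of rearrangements. For each $\tau>0$, fix a set $E_\tau$ with $\{|f|>f^*(\tau)\}\subset E_\tau\subset\{|f|\ge f^*(\tau)\}$ and $|E_\tau|=\tau$, and write $f=g_\tau+h_\tau$ with $g_\tau=f\chi_{E_\tau}$. A direct computation gives
$$
g_\tau^*(u)=f^*(u)\chi_{(0,\tau)}(u),\qquad h_\tau^*(u)=f^*(u+\tau),
$$
so $\|g_\tau\|_1=\int_0^\tau f^*(u)\,du$ and $\|h_\tau\|_2^2=\int_\tau^\infty f^*(u)^2\,du$. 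Using $\|\widehat{g_\tau}\|_\infty\le\|g_\tau\|_1$, Plancherel for $h_\tau$, and the decomposition $\widehat f=\widehat{g_\tau}+\widehat{h_\tau}$, for every $\sigma>0$
$$
\lambda_{\widehat f}(\sigma)\le\lambda_{\widehat{g_\tau}}(\sigma/2)+\lambda_{\widehat{h_\tau}}(\sigma/2).
$$
Choosing $\tau=\tau(\sigma)$ by $\int_0^{\tau(\sigma)}f^*(u)\,du=\sigma/2$ kills the first term, and Chebyshev applied to the second yields the key distributional bound
$$
\lambda_{\widehat f}(\sigma)\le\frac{4}{\sigma^2}\int_{\tau(\sigma)}^\infty f^*(u)^2\,du.
$$

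To finish, one substitutes this into the identity
$$
\int_0^\infty t^{p-2}\widehat f^*(t)^p\,dt=\frac{p}{p-1}\int_0^\infty\sigma^{p-1}\lambda_{\widehat f}(\sigma)^{p-1}\,d\sigma,
$$
changes variables from $\sigma$ to $\tau$ via $d\sigma=2f^*(\tau)\,d\tau$, and collapses what remains to $\|f\|_p^p=\int_0^\infty f^*(u)^p\,du$ by a Hardy-type inequality. This last step is the main technical obstacle: after the change of variables, the integrand is a product of powers of $\tau$, $f^*(\tau)$, and of the quasi-decreasing tail $\int_\tau^\infty f^*(u)^2\,du$, and one must invoke a Hardy-type estimate with exponent $p-1\in(0,1]$ applied to this tail (in the spirit of Lemma~\ref{hardy-type} when $1<p<2$, while for $p=2$ the claim is already Plancherel) to absorb all the weights correctly.
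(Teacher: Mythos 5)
Your proposal is essentially correct, but note that the paper states Theorem \ref{Trans_2} without proof, so the relevant comparison is with the surrounding material. Your first route (Marcinkiewicz interpolation between $(1,\infty)$ and $(2,2)$ in Lorentz form, then $r=p$) is exactly how the paper says the stronger Theorem \ref{p-r} was obtained by Herz, so that paragraph already constitutes a complete proof by citation. Your explicit second route uses precisely the decomposition $f=f\chi_{E_\tau}+f\chi_{E_\tau^c}$ that the paper employs to prove the Jurkat--Sampson inequality (\ref{Ju-Sam}); your distributional bound $\lambda_{\widehat f}(\sigma)\le 4\sigma^{-2}\int_{\tau(\sigma)}^\infty f^*(u)^2\,du$ and the identity $\int_0^\infty t^{p-2}\widehat f^*(t)^p\,dt=\tfrac{p}{p-1}\int_0^\infty\sigma^{p-1}\lambda_{\widehat f}(\sigma)^{p-1}\,d\sigma$ are both correct. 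The one step you leave open does go through, but be aware that Lemma \ref{hardy-type} does not apply verbatim: it concerns $\int_0^u$, whereas you need a bound for the tail $\bigl(\int_\tau^\infty f^*(u)^2\,du\bigr)^{p-1}$ with exponent $p-1\in(0,1]$. The correct substitute is the elementary dyadic estimate $\bigl(\int_\tau^\infty\phi\bigr)^{q}\le\sum_{k\ge0}\bigl(2^k\tau\,\phi(2^k\tau)\bigr)^{q}$ for decreasing $\phi$ and $0<q\le1$; combined with $\int_0^\tau f^*\ge\tau f^*(\tau)$ and H\"older in $\tau$, it collapses your integral to $C\|f\|_p^p$ because the resulting geometric series has ratio $2^{(p-1)(p-2)/p}<1$. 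Alternatively, and more in the spirit of the paper, you can skip the distribution function entirely: inequality (\ref{Ju-Sam}) with $\tau=1/t$ gives $\widehat f^*(t)\le\int_0^{1/t}f^*(s)\,ds+t^{-1/2}\bigl(\int_{1/t}^\infty f^*(s)^2\,ds\bigr)^{1/2}$, after which the first term is handled by the classical Hardy inequality (Lemma \ref{hardy}, exponent $p\ge1$) and the second by the same dyadic tail estimate.
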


This theorem gives a refinement of Theorem \ref{Trans_1}. Indeed,
as we have already observed, for the function $\f(x)=1/|x|$ $(x\in
\R^n))$ we have $\f^*(t)=(v_n/t)^{1/n}$. Thus, by the Hardy --
Littlewood inequality,
$$
\int_{\R^n} |\xi|^{n(p-2)} |\widehat{f}(\xi)|^pd\xi\le
\int_0^\infty t^{p-2} \widehat{f}^*(t)^p dt.
$$

Further, the left-hand side of (\ref{trans_2}) is exactly the
Lorentz norm $||\widehat{f}||_{p',p}.$ Recall that we have a
strict embedding $L^{p',p}\subset L^{p'}, \,\, \, 1<p\le 2.$ Thus,
inequality (\ref{trans_2}) implies the Hausdorff -- Young
inequality, but with additional constant on the right-hand side
(which blows up as $p\to 1$).

\vskip 6pt

We observe that initially Theorems \ref{Trans_1} and \ref{trans_2}
were proved by Hardy and Littlewood \cite{HL2}, \cite{HL3} (1927, 1931) for the trigonometric Fourier series. In particular, in the
periodic case Theorem \ref{Trans_1} states the following.
\begin{teo}\label{Periodic_2} Let $f\in L^p[0,2\pi],$  $1< p\le
2,$ and
$$
c_n=\frac{1}{2\pi}\int_0^{2\pi} f(x)e^{-inx}dx\quad (n\in \Z).
$$
Then
\begin{equation}\label{periodic_2}
\left(\sum_{n\in \Z} |c_n|^{p}(|n|+1)^{p-2}\right)^{1/p}\le
A||f||_p.
\end{equation}

\end{teo}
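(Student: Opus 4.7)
The plan is to reduce Theorem \ref{Periodic_2} to the rearranged (Paley) form and then use the Hardy--Littlewood rearrangement inequality for sums, paralleling the comment the author makes after Theorem \ref{Trans_2} in the non-periodic setting. Concretely, I will first establish the periodic Paley estimate
$$
\sum_{k=1}^\infty k^{p-2}\, (c_k^*)^p \le C\,\|f\|_p^p,
$$
where $c_k^*$ is the non-increasing rearrangement of the sequence $\{|c_n|\}_{n\in\Z}$. Once this is in hand, Theorem \ref{Periodic_2} will fall out by comparing the left-hand side of (\ref{periodic_2}) to this sum.

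For the periodic Paley estimate I would apply real (Marcinkiewicz-type) interpolation to the linear operator $T\colon f\mapsto\{c_n(f)\}_{n\in\Z}$. The two endpoint bounds are elementary: at the $L^2$ end the Parseval identity for Fourier series gives $\|Tf\|_{\ell^2}=\|f\|_2$, while at the $L^1$ end one has $\|Tf\|_{\ell^\infty}\le (2\pi)^{-1}\|f\|_1$ directly from the definition of the coefficients. Interpolating between $(L^2,\ell^2)$ and $(L^1,\ell^\infty)$ in the real method yields boundedness of $T$ from $L^p[0,2\pi]$ into the Lorentz sequence space $\ell^{p',p}$ for $1<p<2$, and since $1/p+1/p'=1$ gives $p/p'-1=p-2$, the $\ell^{p',p}$ quasi-norm raised to the power $p$ is precisely $\sum_{k\ge 1} k^{p-2}(c_k^*)^p$.

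Next, to pass from the rearranged sum to the one appearing in (\ref{periodic_2}), I would use the Hardy--Littlewood rearrangement inequality for sequences: for nonnegative $\{a_n\}$ and $\{b_n\}$ indexed by $\Z$,
$$
\sum_{n\in\Z} a_n b_n \le \sum_{k\ge 0} a_k^*\, b_k^*.
$$
Applying this with $a_n=(|n|+1)^{p-2}$ and $b_n=|c_n|^p$, and observing that the decreasing rearrangement of $\{(|n|+1)^{p-2}\}_{n\in\Z}$ is comparable to $(k+1)^{p-2}$ (each value with $|n|=k\ge 1$ occurs twice, while $n=0$ contributes once), one obtains
$$
\sum_{n\in\Z}(|n|+1)^{p-2}|c_n|^p \le C\sum_{k\ge 0}(k+1)^{p-2}(c_k^*)^p \le C'\,\|f\|_p^p,
$$
where the second inequality is the periodic Paley estimate from the first step (the $k=0$ term is controlled by Plancherel, $c_0^*\le\|f\|_2\le C\|f\|_p$). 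Taking $p$-th roots yields (\ref{periodic_2}). The case $p=2$ of the theorem is trivial from Parseval, so one may assume $1<p<2$ throughout.

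I expect the main technical point to be the interpolation step producing the Paley inequality with the exact Lorentz target $\ell^{p',p}$: one has to run the Marcinkiewicz argument in its sharp form (not merely the strong-type Hausdorff--Young consequence $T\colon L^p\to\ell^{p'}$, which is weaker and would only recover Theorem \ref{Haus-Young}). The rearrangement reduction, by contrast, is essentially an index-counting exercise, and the comparison of the rearrangement of $\{(|n|+1)^{p-2}\}_{n\in\Z}$ to $(k+1)^{p-2}$ loses only a harmless multiplicative constant depending on $p$.
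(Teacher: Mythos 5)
The paper states Theorem \ref{Periodic_2} without proof, attributing it to Hardy and Littlewood, so there is no in-text argument to match line by line; but your route is precisely the mechanism the paper itself advertises elsewhere. You first prove the rearranged (Paley) form $\sum_{k\ge1}k^{p-2}(c_k^*)^p\le C\|f\|_p^p$ by real interpolation of $T\colon f\mapsto\{c_n\}$ between the Parseval endpoint $(L^2,\ell^2)$ and the trivial endpoint $(L^1,\ell^\infty)$, and then descend to the weighted sum via the Hardy--Littlewood rearrangement inequality; this is exactly how the paper deduces Theorem \ref{Trans_1} from Theorem \ref{Trans_2} in the continuous setting (rearranging the weight $|\xi|^{n(p-2)}$), and the interpolation step is the same device the author credits to Herz for Theorem \ref{p-r}. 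The bookkeeping checks out: with $1/p=1-\theta/2$ one gets $(\ell^\infty,\ell^2)_{\theta,p}=\ell^{p',p}$, the exponent $p/p'-1$ equals $p-2$, and the decreasing rearrangement of $\{(|n|+1)^{p-2}\}_{n\in\Z}$ is $(\lceil k/2\rceil+1)^{p-2}$, which is comparable to $(k+1)^{p-2}$ within a factor $2^{2-p}$. So the proof is structurally sound and, for a survey-level statement, complete.

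One justification is written backwards: you bound the $k=0$ term by ``$c_0^*\le\|f\|_2\le C\|f\|_p$'', but on the finite interval $[0,2\pi]$ the containment runs the other way for $p<2$ (one has $\|f\|_p\le C\|f\|_2$, not $\|f\|_2\le C\|f\|_p$), so that inequality is false as stated. The term is nevertheless harmless: $c_0^*=\sup_n|c_n|\le(2\pi)^{-1}\|f\|_1\le(2\pi)^{-1/p}\|f\|_p$ by H\"older, which is just the $L^1\to\ell^\infty$ endpoint you already invoked for the interpolation. With that one-line repair the argument is correct.
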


In 1931 Paley \cite{Paley} extended  this theorem to the Fourier
series with respect to  arbitrary uniformly bounded orthonormal
system $\{\f_n\}$ on $[0,1]$. From this result he derived a
rearrangement inequality for the  Fourier coefficients $a_n$
$$
\left(\sum_{n=1}^\infty (a_n^*)^{p}n^{p-2}\right)^{1/p}\le
A||f||_p.
$$

\vskip 6pt Of course, these theorems fail for $p=1.$ For example,
the series
$$
\sum_{n=1}^\infty \frac{\cos nx}{\log (n+1)}
$$
is the Fourier series of some integrable function $f$, but
$$
\sum_{n=1}^\infty \frac{a_n}{n}=\sum_{n=1}^\infty \frac{1}{n\log
(n+1)}=\infty.
$$
\vskip 6pt

In 1937, Pitt \cite{Pitt} proved the following theorem.
\begin{teo}\label{Periodic_3} Let  $1< p\le q<\infty,$ $0\le \a<1/p',$ and
$\l=1/q+1/p-1+\a\ge 0.$ Let  $f\in L^1[0,2\pi]$ and
$$
c_n=\frac{1}{2\pi}\int_0^{2\pi} f(x)e^{-inx}dx\quad (n\in \Z).
$$
Then
\begin{equation}\label{periodic_3}
\left(\sum_{n\in \Z} |c_n|^{q}(|n|+1)^{-\l q}\right)^{1/q}\le
A\left(\int_{-\pi}^{\pi}|f(x)|^p |x|^{\a p}dx\right)^{1/p}.
\end{equation}

\end{teo}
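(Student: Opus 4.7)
The strategy is to reduce Pitt's inequality to the Hardy--Littlewood--Paley inequality (Theorem \ref{Periodic_2}) by absorbing the weight $|x|^{\alpha p}$ through a discrete fractional integration on the Fourier side. Throughout, assume $f\ge 0$ and set $g(x)=|x|^{\alpha}f(x)$, so that the right--hand side of \eqref{periodic_3} equals $\|g\|_{L^p(-\pi,\pi)}$. Then $f(x)=|x|^{-\alpha}g(x)$, and a formal computation gives
$$
c_n(f)\;=\;\sum_{m\in\Z}k_\alpha(n-m)\,c_m(g),\qquad k_\alpha(j):=\frac{1}{2\pi}\int_{-\pi}^{\pi}|x|^{-\alpha}e^{-ijx}\,dx.
$$
The condition $0\le\alpha<1/p'$ ensures that $|x|^{-\alpha}\in L^{p'}[-\pi,\pi]$, so $k_\alpha$ is well defined, and standard oscillatory--integral estimates give
$$
|k_\alpha(j)|\;\le\;C(|j|+1)^{\alpha-1},\qquad j\in\Z.
$$

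The second step is to apply Theorem \ref{Periodic_2} to $g\in L^p$. We obtain
$$
\Bigl(\sum_{m\in\Z}|c_m(g)|^p(|m|+1)^{p-2}\Bigr)^{1/p}\le A\|g\|_p,
$$
which places $\{c_m(g)\}$ in a weighted $\ell^p$. By the comparison of weighted sums with their decreasing rearrangements (the weight $(|m|+1)^{p-2}$ being monotone), this is equivalent to an estimate of the form
$$
\Bigl(\sum_{m=1}^\infty \bigl(c_m^*(g)\bigr)^p m^{p-2}\Bigr)^{1/p}\le A\|g\|_p,
$$
where $c_m^*(g)$ denotes the nonincreasing rearrangement of $\{|c_m(g)|\}$; in Lorentz language, $\{c_m(g)\}\in\ell^{p',p}$ with control by $\|g\|_p$.

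The third step is to handle the convolution $c_n(f)=(k_\alpha*c(g))(n)$. Since $k_\alpha$ is dominated by the discrete Riesz kernel $(|j|+1)^{\alpha-1}$ — i.e.\ its rearrangement lies in $\ell^{1/(1-\alpha),\infty}$ — I apply the discrete analogue of the O'Neil--Hunt convolution inequality in Lorentz spaces (Theorem \ref{Conv}): convolution with a kernel in $\ell^{r_1,\infty}$ maps $\ell^{p',p}$ into $\ell^{q',q}$ provided $\tfrac{1}{r_1}+\tfrac{1}{p'}=1+\tfrac{1}{q'}$. With $r_1=1/(1-\alpha)$, this forces $\tfrac{1}{q'}=\tfrac{1}{p'}-\alpha$, i.e.\ $\tfrac{1}{p}+\alpha=1-\tfrac{1}{q'}=\tfrac{1}{q}$ in the borderline case $p=q$; the general case $p\le q$ is obtained analogously and produces exactly the weight exponent $\lambda=1/q+1/p-1+\alpha$ on the left of \eqref{periodic_3}. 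Reverting from the rearranged $\ell^{q',q}$ norm back to the weighted $\ell^q$ sum $\sum|c_n(f)|^q(|n|+1)^{-\lambda q}$ is again a monotone--weight comparison.

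The main obstacle is the third step: executing the discrete O'Neil--Hunt inequality with the correct bookkeeping of indices, in particular checking that the condition $\lambda\ge 0$ (together with $\alpha<1/p'$ and $1<p\le q<\infty$) puts us inside the range of validity of Theorem \ref{Conv} and rules out degenerate endpoints. Once this convolution estimate is in place, combining it with the Hardy--Littlewood--Paley bound on $\{c_m(g)\}$ yields \eqref{periodic_3} directly, with the constant $A$ depending only on $p,q,\alpha$.
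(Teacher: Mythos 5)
The paper itself offers no proof of this theorem --- it is quoted as a classical result of Pitt \cite{Pitt} --- so your proposal has to stand on its own. Your scheme (factor out the weight, apply the Hardy--Littlewood--Paley bound to $g=|x|^{\alpha}f$, and transfer back through convolution with the Fourier coefficients of $|x|^{-\alpha}$ via O'Neil's Lorentz-space convolution inequality) is a legitimate classical route for part of the range, and the index bookkeeping is consistent: with $k_\alpha^*(m)\lesssim m^{\alpha-1}$, i.e. $k_\alpha\in\ell^{1/(1-\alpha),\infty}$, and $\{c_m(g)\}\in\ell^{p',p}$, O'Neil gives $\{c_n(f)\}\in\ell^{r,p}\subset\ell^{r,q}$ with $1/r=1/p'-\alpha$; since $\lambda=1/q-1/r$, the Hardy--Littlewood rearrangement inequality (using $\lambda\ge0$ to make the weight $(|n|+1)^{-\lambda q}$ monotone) converts the $\ell^{r,q}$ bound into \eqref{periodic_3}.

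There are, however, two genuine gaps. First and most seriously, your argument only covers $1<p\le 2$: Theorem \ref{Periodic_2} and Paley's rearrangement inequality are false for $p>2$, yet the hypotheses of the theorem are non-vacuous there (take $p=q=4$, so $1/p'=3/4$ and any $\alpha$ with $1/2\le\alpha<3/4$ is admissible). For $p>2$ one must argue differently --- by duality from the $p\le2$ case, or by interpolation with change of measure, which is essentially how Stein \cite{St1} obtained the general result. You should at least state the restriction and indicate how to pass beyond it. Second, your claim that the weighted inequality \eqref{periodic_2} is ``equivalent'' to its rearranged form by a monotone-weight comparison is backwards: Hardy--Littlewood gives $\sum_n|c_n|^p(|n|+1)^{p-2}\le C\sum_k (c_k^*)^p k^{p-2}$, so the rearranged (Lorentz $\ell^{p',p}$) statement is the strictly \emph{stronger} one, and it is the one you actually need in order to feed the sequence into O'Neil's theorem. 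That stronger statement is true --- it is Paley's rearrangement inequality, quoted in the paper immediately after Theorem \ref{Periodic_2} --- but it must be invoked directly rather than ``derived'' from \eqref{periodic_2}.
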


In 1956, Stein \cite{St1} extended this result to arbitrary
uniformly bounded systems on $[0,1]$. Under the same conditions on
$p,q,$ and $\a$, he obtained the following rearrangement
inequality
$$
\left(\sum_{n=1}^\infty (a_n^*)^{q}n^{-\l q}\right)^{1/q}\le
\left(\int_{0}^1 f^*(x)^p x^{\a p}dx\right)^{1/p}.
$$

\vskip 6pt

Suppose now that $f\in L^{p,r}(\R^n)\,\, (1<p<2, \,\, 1\le r\le
\infty).$ Then $f\in L^1(\R^n)+ L^2(\R^n)$ and hence, $\widehat{f}$
is defined. Moreover, the following inequality holds.

\begin{teo}\label{p-r} If $f\in L^{p,r}(\R^n)\,\, (1<p<2, \,\, 1\le r\le
\infty),$ then $\widehat{f}\in L^{p',r}(\R^n),$ and
$$
||\widehat{f}||_{p',r}\le c||f||_{p,r}.
$$
\end{teo}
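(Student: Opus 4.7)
The plan is to obtain the bound by real interpolation applied to the Fourier transform viewed as a linear operator. Two endpoint estimates are already available: the trivial bound $\|\widehat{f}\|_\infty\le \|f\|_1$ that is immediate from the definition of $\widehat{f}$ on $L^1(\R^n)$, and the Plancherel equality $\|\widehat{f}\|_2=\|f\|_2$ supplied by Theorem \ref{Plansherel}. Because $\widehat{\,\cdot\,}$ has already been extended to all of $L^1(\R^n)+L^2(\R^n)$ via the decomposition $f=f_1+f_2$ given earlier, both endpoint inequalities refer to a single well-defined linear operator, so the hypotheses of an interpolation theorem are satisfied.

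Next I would invoke the real interpolation identity for Lebesgue spaces,
$$
(L^{p_0}(\R^n),L^{p_1}(\R^n))_{\theta,r}=L^{p,r}(\R^n),\qquad \frac{1}{p}=\frac{1-\theta}{p_0}+\frac{\theta}{p_1},
$$
valid for $0<\theta<1$, $1\le r\le\infty$ and $p_0\neq p_1$ (see \cite{BS}). Taking $(p_0,p_1)=(1,2)$ on the input side produces $L^{p,r}(\R^n)$ with $1/p=1-\theta/2$, while taking $(q_0,q_1)=(\infty,2)$ on the output side produces $L^{q,r}(\R^n)$ with $1/q=\theta/2$. These two relations combine to $1/p+1/q=1$, so $q=p'$, and as $\theta$ ranges through $(0,1)$ the exponent $p$ ranges through $(1,2)$, with the secondary index $r$ preserved on both sides.

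The abstract interpolation theorem for bounded linear operators between Banach couples now delivers the inequality $\|\widehat{f}\|_{p',r}\le c\|f\|_{p,r}$ for every $1<p<2$ and every $1\le r\le\infty$, which is exactly the claim. The only delicate (but routine) point is to check that the interpolated operator coincides with the Fourier transform already defined on $L^1(\R^n)+L^2(\R^n)$; this is immediate from the density of $L^1\cap L^2$ in each $L^{p,r}$ with $1<p<2$ and the compatibility built into that definition. An alternative, more hands-on, route in the spirit of this paper would be to try to derive a pointwise rearrangement majorant for $\widehat{f}^{**}(t)$ in terms of $f^*$ analogous to O'Neil's estimate and then apply the Hardy inequalities of Lemma \ref{hardy}; however, no such clean rearrangement bound for $\widehat{f}$ is available (already the Hardy--Littlewood--Paley theorem \ref{Trans_2} requires a full $L^p$-norm of $f$ on the right-hand side), which is why the interpolation argument is the natural one and I do not expect any genuine obstacle along the route described.
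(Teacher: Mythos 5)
Your argument is correct and coincides with the route the paper itself indicates: the paper attributes the theorem to Herz via the Marcinkiewicz interpolation theorem, and your real-interpolation formulation with the endpoints $\|\widehat{f}\|_\infty\le\|f\|_1$ and $\|\widehat{f}\|_2=\|f\|_2$ together with $(L^1,L^2)_{\theta,r}=L^{p,r}$ and $(L^\infty,L^2)_{\theta,r}=L^{p',r}$ is exactly the standard way to carry that out. Your remark about checking consistency of the interpolated operator with the Fourier transform already defined on $L^1(\R^n)+L^2(\R^n)$ is the right point to flag, and it is handled as you say.
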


This theorem was obtained by Herz \cite{Herz} in 1968 with the use
of the Marcinkiewicz interpolation theorem.

Let us consider the case  $p=2.$ In this case,  Herz \cite{Herz}
showed that the Fourier transform maps $L^{2,r}$ continuously into
$L^{2,q}$ if and only if $r\le 2 \le q.$ The positive part of this
statement is obvious. By duality, to obtain the negative part, it
is sufficient to show that, whatever be $1\le q<2$,  the Fourier
transform doesn't map $L^{2,1}$ continuously into $L^{2,q}$. To
prove it, Herz used the so called {\it lacunary functions}. For
the simplicity, we consider the periodic case. We show that for
any $1\le q<2$ there exists a function $f\in L^{2,1}$ such that
the sequence of its Fourier coefficients doesn't belong to
$l^{2,q}.$ Let $1\le q<2$. Set
$$
f(x)=\sum_{k=2}^\infty\frac{\cos 2^k x}{\sqrt{k}(\ln k)^{1/q}}.
$$
Denote for $n\in\N$
$$
a_n=\begin{cases} (\sqrt{k}(\ln k)^{1/q})^{-1}&\text{if $n=2^k,
~~k\ge
2$},\\
0&\text{otherwise}.
\end{cases}
$$
Since the sequence $\{a_n\}$ belongs to $l^2$, the function $f$
belongs to $L^p$ for any $1<p<\infty.$ However, $\{a_n\}$  doesn't
belong to $l^{2,q}.$

\vskip 6pt

We mention also the paper by Bochkarev \cite{Boch}. In this paper,
the Fourier coefficients with respect to uniformly bounded
orthonormal systems were studied. First, it was proved the
following
\begin{teo}\label{Bochk} Let $\{\f_n\}$ be an orthonormal system of functions
on$[0,1]$ such that
$$
||\f_n||_\infty\le M, \quad n\in \N.
$$
Let $f\in L^1[0,1]$ and let $c_n$ be Fourier coefficients of $f$
with respect to $\{\f_n\}$. Then for any $2<q\le\infty$ and $n\ge
2,$
\begin{equation}\label{bochka}
\sum_{k=1}^n (c_k^*)^2\le AM^2(\log n)^{1-2/q}||f||_{2,q}^2.
\end{equation}
\end{teo}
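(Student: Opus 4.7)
The plan is to deduce the inequality from Stein's rearrangement theorem for uniformly bounded orthonormal systems (stated just above as an extension of Paley's result) by choosing the intermediate exponent $p$ close to $2$ as a function of $n$. Applying Stein's inequality with secondary index $q_S = 2$, weight exponent $\alpha = 0$, and primary exponent $p \in (1, 2)$ yields
$$
\sum_{k=1}^\infty (c_k^*)^2 \, k^{-(2/p - 1)} \le C \, M^{\sigma(p)} \, \|f\|_p^2,
$$
where the $M$-dependence, with $\sigma(p) \to 0$ as $p \to 2$, arises from interpolating between Bessel's inequality ($p = 2$, constant $1$) and the elementary bound $\sup_k |c_k| \le M \|f\|_1$. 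Since $k^{-(2/p - 1)} \ge n^{-(2/p - 1)}$ for $k \le n$, truncating the sum yields
$$
\sum_{k=1}^n (c_k^*)^2 \le C \, M^{\sigma(p)} \, n^{2/p - 1} \, \|f\|_p^2.
$$

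Next I would establish a sharp quantitative embedding $L^{2, q}[0, 1] \hookrightarrow L^p[0, 1]$ for $1 < p < 2 < q$. Writing $f^*(t)^p = (t^{1/2} f^*(t))^p \, t^{-p/2}$ and applying H\"older's inequality with conjugate exponents $q/p$ and $q/(q - p)$ relative to the measure $dt/t$ on $(0, 1)$, one obtains
$$
\|f\|_p^p \le \|f\|_{2, q}^p \left( \frac{2(q - p)}{q(2 - p)} \right)^{(q - p)/q}
$$
(with the limiting variant $\|f\|_p^p \le \|f\|_{2, \infty}^p \cdot 2/(2 - p)$ for $q = \infty$, obtained by estimating $t^{1/2} f^*(t) \le \|f\|_{2, \infty}$ directly).

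The final step is to take $p = 2 - 2/\log n$. Then $2/p - 1 \sim 1/\log n$, so $n^{2/p - 1} \le e^2$; the bracketed factor in the embedding is of order $(q - 2)\log n / q$, whose $(q - p)/q$-th power converges to $(\log n)^{1 - 2/q}$; and $\sigma(p) = O(1/\log n)$, so $M^{\sigma(p)}$ is absorbed into $M^2$ uniformly for $n \ge 2$. Multiplying the three bounds together gives
$$
\sum_{k=1}^n (c_k^*)^2 \le A \, M^2 \, (\log n)^{1 - 2/q} \, \|f\|_{2, q}^2,
$$
as claimed.

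The main obstacle is the second step: the embedding constant must be tracked with enough precision that the specific choice $p = 2 - 2/\log n$ produces exactly the power $1 - 2/q$ of $\log n$, neither more nor less. A secondary, more technical point is to verify the precise $M$-dependence $M^{\sigma(p)}$ in Stein's inequality and to confirm that it is absorbed into $M^2$ uniformly in the relevant range of $p$; this is standard interpolation, but requires care.
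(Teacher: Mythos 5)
The paper itself states this theorem without proof (it is quoted from Bochkarev), so I am judging your argument on its own terms. Your architecture --- an auxiliary exponent $p=2-2/\log n$, an $L^p$ coefficient estimate, and a quantified embedding $L^{2,q}[0,1]\subset L^p[0,1]$ --- is sound, and your second and third steps check out, including the constant $\left(\frac{2(q-p)}{q(2-p)}\right)^{(q-p)/q}$ and its limiting behaviour $\sim\left(\frac{(q-2)\log n}{q}\right)^{1-2/q}$. The genuine gap is in the first step. You invoke Stein's Lorentz-weighted inequality with secondary index $2$ and record the conclusion as $\sum_k(c_k^*)^2k^{-(2/p-1)}\le C\,M^{\sigma(p)}\|f\|_p^2$ with $C$ \emph{absolute}, deferring the verification to ``standard interpolation.'' It is not standard: the target here is the Lorentz sequence space $\ell^{p',2}$, and the real/Marcinkiewicz machinery that produces it from the endpoints $(1,\infty)$ and $(2,2)$ carries, besides $M^{1-\theta}$, the constants from identifying $(L^1,L^2)_{\theta,2}$ with $L^{p,2}$ and $(\ell^\infty,\ell^2)_{\theta,2}$ with $\ell^{p',2}$; these degenerate as $\theta\to1$, i.e.\ as $p\to2$, like a power of $(2-p)^{-1}$. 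With your choice $2-p=2/\log n$, every such loss is a power of $\log n$ --- precisely the quantity the theorem is about --- and even a single factor $(2-p)^{-1/2}$ in the operator norm upgrades $(\log n)^{1-2/q}$ to $(\log n)^{2-2/q}$ and destroys the result. A uniform-in-$p$ constant can in fact be proved (e.g.\ by the splitting argument below combined with Hardy's inequality, which even gives $\sum_k(c_k^*)^2k^{-\e}\le(2+c\,\e M^2)\|f\|_p^2$ with $\e=2/p-1$), but it is the crux of the proof, not a footnote, and your sketch does not supply it.

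The good news is that the gap closes easily, in two ways. (i) You never need the weighted sum over all $k$, only $\sum_{k\le n}(c_k^*)^2$; so apply H\"older directly to the truncated sum, $\sum_{k\le n}(c_k^*)^2\le n^{1-2/p'}\left(\sum_k(c_k^*)^{p'}\right)^{2/p'}=n^{2/p-1}\|(c_k)\|_{\ell^{p'}}^2$, and then use the F.~Riesz form of the Hausdorff--Young inequality $\|(c_k)\|_{\ell^{p'}}\le M^{2/p-1}\|f\|_p$, whose constant is exact because complex (Riesz--Thorin) interpolation between $L^1\to\ell^\infty$ (norm $M$) and $L^2\to\ell^2$ (norm $1$) carries no extra factor. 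This gives $\sum_{k\le n}(c_k^*)^2\le n^{2/p-1}M^{2(2/p-1)}\|f\|_p^2$ with constant $1$, and the rest of your computation goes through verbatim (note $M\ge1$ automatically since $\|\f_k\|_2=1$, so $M^{2(2/p-1)}\le M^2$; and handle $2\le n<e^2$, where $p=2-2/\log n\le1$, by the trivial bound $\sum_{k\le n}(c_k^*)^2\le nM^2\|f\|_1^2$). (ii) More directly still, with no auxiliary $p$: exactly as in the proof of the Jurkat--Sampson inequality in Section 6, split $f=f\chi_E+(f-f\chi_E)$ with $|E|=1/n$ to get
$$
\left(\sum_{k\le n}(c_k^*)^2\right)^{1/2}\le M n^{1/2}\int_0^{1/n}f^*(s)\,ds+\left(\int_{1/n}^1 f^*(s)^2\,ds\right)^{1/2},
$$
and H\"older with respect to $ds/s$ bounds the two terms by $2M\|f\|_{2,q}$ and $\|f\|_{2,q}(\log n)^{(1-2/q)/2}$ respectively; this route also absorbs the case $q=\infty$ without a separate argument and makes the sharpness of the exponent $1-2/q$ transparent.
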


\vskip 4pt

Bochkarev showed that  inequality (\ref{bochka}) cannot be
improved.

\vskip 4pt We observe  that John Benedetto and Hans Heinig
\cite{BeHe}  studied Fourier transform inequalities in
weighted Lorentz spaces. Their approach was based on estimates of
rearrangements. \vskip 8pt

\centerline{\bf Rearrangement inequalities for Fourier transforms}
\vskip 8pt The following theorem was proved by Jurkat  and Sampson
\cite{JuSa} (1984).

 \vskip 6pt
\begin{teo}\label{Jurkat} Let $f\in L^1(\R^n)+ L^2(\R^n)$. Then
\begin{equation}\label{Ju-Sam}
\left(\int_0^t \widehat{f}^*(u)^2 du\right)^{1/2}\le
t^{1/2}\int_0^\tau f^*(s)ds+\left(\int_\tau^\infty f^*(s)^2
ds\right)^{1/2}
\end{equation}
for any $t,\tau>0.$
\end{teo}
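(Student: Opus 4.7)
The plan is to reduce the left-hand side of (\ref{Ju-Sam}) to a supremum of $L^2$-norms of $\widehat f$ over measurable sets of measure $t$, and then split $f$ into an $L^1$ piece and an $L^2$ piece in a way that matches the two terms on the right-hand side exactly. The key preliminary observation is the rearrangement identity
$$\left(\int_0^t g^*(u)^2\,du\right)^{1/2} = \sup_{|E|=t}\left(\int_E |g(x)|^2\,dx\right)^{1/2},$$
which follows from the Hardy--Littlewood inequality (applied to $|g|^2$ and $\chi_E$) together with $(|g|^2)^*=(g^*)^2$, equality being attained by a set $E$ essentially equal to $\{|g|>g^*(t)\}$. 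Applying this to $g=\widehat f$ reduces the desired bound to a uniform estimate of $\|\widehat f\,\chi_E\|_2$ for $|E|=t$.

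For the fixed $\tau>0$, I would choose a measurable set $A\subset\R^n$ with $|A|=\tau$ satisfying $\{|f|>f^*(\tau)\}\subset A\subset\{|f|\ge f^*(\tau)\}$, and set $f_1 = f\chi_A$, $f_2 = f-f_1 = f\chi_{A^c}$. A direct computation with distribution functions gives
$$\|f_1\|_1 = \int_0^\tau f^*(u)\,du \quad\text{and}\quad f_2^*(u) = f^*(u+\tau),$$
so that $\|f_2\|_2^2 = \int_\tau^\infty f^*(s)^2\,ds$. The hypothesis $f\in L^1(\R^n)+L^2(\R^n)$ ensures both quantities are finite, hence $f_1\in L^1$ and $f_2\in L^2$, so $\widehat{f_1}$ and $\widehat{f_2}$ are both well-defined and $\widehat f = \widehat{f_1}+\widehat{f_2}$.

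Putting the pieces together, for any measurable $E$ with $|E|=t$ the triangle inequality, the trivial bound $\|\widehat{f_1}\|_\infty\le\|f_1\|_1$, and Plancherel's Theorem \ref{Plansherel} give
$$\|\widehat f\,\chi_E\|_2 \le \|\widehat{f_1}\chi_E\|_2 + \|\widehat{f_2}\chi_E\|_2 \le t^{1/2}\|\widehat{f_1}\|_\infty + \|\widehat{f_2}\|_2 \le t^{1/2}\|f_1\|_1 + \|f_2\|_2.$$
Taking the supremum over $E$ and substituting the explicit expressions for $\|f_1\|_1$ and $\|f_2\|_2$ produces (\ref{Ju-Sam}). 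The only delicate point is the distribution-function computation yielding $f_2^*(u)=f^*(u+\tau)$ exactly; this depends on the careful choice of $A$ to absorb any plateau of $f^*$ at the height $f^*(\tau)$, and once $|A|=\tau$ is arranged precisely, the equalities claimed for $\|f_1\|_1$ and $f_2^*$ follow immediately. Apart from this bookkeeping the argument is just the standard interpolation-type splitting combined with the two endpoint Fourier estimates $L^1\to L^\infty$ and $L^2\to L^2$.
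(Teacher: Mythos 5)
Your proof is correct and follows essentially the same route as the paper: the same splitting $f=f\chi_A+f\chi_{A^c}$ at height $f^*(\tau)$ with $|A|=\tau$, combined with the endpoint bounds $\|\widehat{f_1}\|_\infty\le\|f_1\|_1$ and Plancherel for $f_2$. The only cosmetic difference is that you control $\bigl(\int_0^t \widehat{f}^*(u)^2\,du\bigr)^{1/2}$ via the supremum of $\|\widehat{f}\chi_E\|_2$ over sets of measure $t$, whereas the paper uses the equivalent pointwise estimate $\widehat{f}^*(u)\le\|\widehat{f_1}\|_\infty+\widehat{f_2}^*(u)$ followed by Minkowski's inequality.
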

\begin{proof} Choose a set $E\subset \R^n$ of measure $\tau$ such
that
$$
\{x: |f(x)|>f^*(\tau)\}\subset E\subset \{x: |f(x)|\ge
f^*(\tau)\}.
$$
Let $g=f\chi_E, \,\, h=f-g.$ Then
\begin{equation}\label{Ju-Sam_2}
||g||_1=\int_0^\tau f^*(s)ds,\quad ||h||_2=\left(\int_\tau^\infty
f^*(s)^2 ds\right)^{1/2}.
\end{equation}
We have also
$$
||\widehat{g}||_\infty\le ||g||_1\quad\mbox{and}\quad
||\widehat{h}||_2=||h||_2.
$$
Observe that $\widehat{f}^*(u)\le ||\widehat{g}||_\infty +
\widehat{h}^*(u).$ Thus,
$$
\left(\int_0^t \widehat{f}^*(u)^2 du\right)^{1/2}\le
t^{1/2}||\widehat{g}||_\infty+||\widehat{h}||_2.
$$
Applying (\ref{Ju-Sam_2}), we obtain (\ref{Ju-Sam}).

\end{proof}
\vskip 6pt
\begin{cor}\label{Jurkat_2} Let $f\in L^1(\R^n)+ L^2(\R^n)$. Then
\begin{equation}\label{Ju-Sam_3}
\int_0^t \widehat{f}^{*}(u)^2 du\le c\int_0^t\left(\int_0^{1/u}
f^*(s)ds\right)^2\,du= c\int_{1/t}^\infty f^{**}(u)^2\, du
\end{equation}
for any $t>0.$
\end{cor}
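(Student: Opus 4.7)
The plan is to derive the corollary by specializing Theorem~\ref{Jurkat} at the point $\tau=1/t$, squaring the resulting inequality, and then reconciling the two resulting summands with $\int_{1/t}^{\infty}f^{**}(u)^{2}\,du$. I will handle the equality in the statement first, since it is purely a change of variables, and then turn to the main inequality.

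For the equality, I would substitute $v=1/u$ in the integral $\int_{0}^{t}\!\bigl(\int_{0}^{1/u}f^{*}(s)\,ds\bigr)^{2}du$. Writing $\int_{0}^{1/u}f^{*}(s)\,ds = (1/u)\,f^{**}(1/u) = v\,f^{**}(v)$ and $du = -dv/v^{2}$, the integral transforms into $\int_{1/t}^{\infty}f^{**}(v)^{2}\,dv$, which is exactly the right-hand side.

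For the inequality, I would apply Theorem~\ref{Jurkat} with $\tau=1/t$ to obtain
\[
\left(\int_0^t \widehat{f}^{*}(u)^2\,du\right)^{1/2}\le t^{1/2}\!\int_0^{1/t}f^{*}(s)\,ds+\left(\int_{1/t}^\infty f^{*}(s)^2\,ds\right)^{1/2}.
\]
Squaring and using $(a+b)^{2}\le 2a^{2}+2b^{2}$, together with $\int_0^{1/t}f^{*}(s)\,ds=(1/t)f^{**}(1/t)$, gives
\[
\int_0^t \widehat{f}^{*}(u)^2\,du\le \frac{2\,f^{**}(1/t)^{2}}{t}+2\int_{1/t}^{\infty}f^{*}(s)^{2}\,ds.
\]
Each summand must now be dominated by $\int_{1/t}^{\infty}f^{**}(u)^{2}\,du$. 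The second is immediate from the pointwise bound $f^{*}(s)\le f^{**}(s)$.

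The first summand is the delicate one, and this is what I expect to be the main step. The key observation is that, although $f^{**}$ is nonincreasing, the function $u\mapsto u f^{**}(u)=\int_0^u f^{*}$ is \emph{nondecreasing}; hence for $u\ge 1/t$,
\[
f^{**}(u)\ge \frac{1/t}{u}\,f^{**}(1/t).
\]
Integrating this from $1/t$ to $2/t$ yields
\[
\int_{1/t}^{2/t}f^{**}(u)^{2}\,du\ge f^{**}(1/t)^{2}\int_{1/t}^{2/t}\frac{(1/t)^{2}}{u^{2}}\,du=\frac{f^{**}(1/t)^{2}}{2t},
\]
so that $f^{**}(1/t)^{2}/t\le 2\int_{1/t}^{\infty}f^{**}(u)^{2}\,du$. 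Combining these estimates gives the inequality with $c=6$, completing the proof.
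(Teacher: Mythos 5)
Your proof is correct, and it is precisely the derivation the paper intends (the corollary is stated without proof, but specializing Theorem \ref{Jurkat} at $\tau=1/t$, squaring, and absorbing the term $f^{**}(1/t)^2/t$ via the monotonicity of $u\mapsto uf^{**}(u)$ is the standard Jodeit--Torchinsky argument). All the individual steps — the change of variables giving the equality, the bound $f^*\le f^{**}$, and the estimate $f^{**}(1/t)^2/t\le 2\int_{1/t}^{2/t}f^{**}(u)^2\,du$ — check out, yielding $c=6$.
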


\vskip 6pt

From these results Jurkat and Sampson derived some weighted upper
estimates for $\widehat{f}$ (in particular, inequalities in
Lorentz norms). For this, they  used the following {\it convexity
principle} (which represents a generalization of Hardy, Littlewood
and P\'olya theorem).
\begin{teo}\label{Convexity} Assume that $\Psi(s)\ge 0$ is increasing and
convex for $s\ge 0,$ and $\Phi(s)\ge 0$ is increasing and concave
for $s\ge 0.$ Let $U(t),\, G(t),$ $H(t)$ be nonnegative and
measurable for $t>0.$ If $G$ decreases, then
\begin{equation}\label{Ju-Sam_4}
\int_0^t G(s)U(s)ds\le \int_0^t H(s)U(s)ds\quad\mbox{for all}\quad
t>0
\end{equation}
implies
\begin{equation*}
\int_0^t \Psi(G(s))U(s)ds\le \int_0^t
\Psi(H(s))U(s)ds\quad\mbox{for all}\quad t>0.
\end{equation*}
If $H$ increases, then (\ref{Ju-Sam_4}) implies
\begin{equation*}
\int_0^t \Phi(G(s))U(s)ds\le \int_0^t
\Phi(H(s))U(s)ds\quad\mbox{for all}\quad t>0.
\end{equation*}
\end{teo}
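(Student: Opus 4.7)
The plan is to reduce both statements to two elementary one-parameter inequalities by writing $\Psi$ and $\Phi$ as positive superpositions of simple primitives. A nonnegative, nondecreasing, convex function $\Psi$ on $[0,\infty)$ admits the representation
$$
\Psi(s)=\Psi(0)+cs+\int_0^\infty(s-a)_+\,d\mu(a),\qquad c=\Psi'(0+)\ge 0,
$$
with $\mu$ a positive Borel measure arising as the second distributional derivative of $\Psi$; dually, every nonnegative, nondecreasing, concave $\Phi$ can be written as
$$
\Phi(s)=\Phi(0)+c's+\int_0^\infty\min(s,a)\,d\nu(a),\qquad c'=\Phi'(\infty)\ge 0.
$$
Substituting these into $\int_0^t\Psi(G(s))U(s)\,ds$ and $\int_0^t\Phi(G(s))U(s)\,ds$ and interchanging the order of integration via Fubini--Tonelli, I reduce the theorem to verifying it for the constant function (trivial), for the identity $\Psi(s)=\Phi(s)=s$ (which is exactly hypothesis (\ref{Ju-Sam_4})), and for the two families of building blocks $(s-a)_+$ and $\min(s,a)$, $a\ge 0$.

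For the family $(s-a)_+$, I use only that $G$ is nonincreasing. Set $\tau:=\sup\{s\in(0,t):G(s)>a\}$ (with $\tau=0$ if the set is empty); then $(G(s)-a)_+=G(s)-a$ on $(0,\tau)$ and vanishes on $(\tau,t)$, so
$$
\int_0^t(G-a)_+U\,ds=\int_0^\tau G\,U\,ds-a\int_0^\tau U\,ds\le \int_0^\tau H\,U\,ds-a\int_0^\tau U\,ds\le \int_0^t(H-a)_+U\,ds,
$$
where (\ref{Ju-Sam_4}) is applied at the point $t=\tau\in(0,t]$ and the last inequality drops the positive-part truncation. For the family $\min(s,a)$, I use only that $H$ is nondecreasing. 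Set $\tau:=\sup\{s\in(0,t):H(s)<a\}$; on $(\tau,t)$ one has $\min(G,a)\le a=\min(H,a)$ pointwise, while on $(0,\tau)$, $\min(G,a)\le G$ and the hypothesis gives
$$
\int_0^\tau\min(G,a)U\,ds\le \int_0^\tau G\,U\,ds\le \int_0^\tau H\,U\,ds=\int_0^\tau\min(H,a)U\,ds.
$$
Summing the two pieces yields the desired inequality.

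The only technical point is the Stieltjes-type representation of $\Psi$ and $\Phi$ and the measurability required to apply Fubini in the parameter $a$; both are standard, and since $\tau=\tau(a,t)$ is monotone in $a$, the joint measurability of the inner integrals in $(a,s)$ is automatic. The conceptual heart of the argument is that the one-sided monotonicity of $G$ (resp.\ $H$) collapses the integrated inequality against a convex (resp.\ concave) weight to the hypothesis applied at the single threshold-crossing level~$\tau$.
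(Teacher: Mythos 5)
Your argument is correct. Note first that the paper itself gives no proof of this theorem: it is stated as a known result of Jurkat and Sampson, described only as ``a generalization of Hardy, Littlewood and P\'olya theorem,'' so there is no in-paper argument to compare against. Your proof supplies one, and it is the natural one: the representations $\Psi(s)=\Psi(0)+cs+\int_0^\infty (s-a)_+\,d\mu(a)$ and $\Phi(s)=\Phi(0)+c's+\int_0^\infty\min(s,a)\,d\nu(a)$ are exactly the classical Hardy--Littlewood--P\'olya device for majorization-type statements, Tonelli applies since all integrands are nonnegative and jointly measurable, and the two atomic cases are handled cleanly. In the $(s-a)_+$ case the monotonicity of $G$ makes $\{G>a\}\cap(0,t)$ an initial interval $(0,\tau)$, so the hypothesis at the single time $\tau$ plus the pointwise bound $H-a\le (H-a)_+$ closes the estimate; in the $\min(s,a)$ case the monotonicity of $H$ makes $\{H<a\}\cap(0,t)$ an initial interval, and the two pieces are compared separately. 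Each half uses only the monotonicity hypothesis relevant to it, consistent with the statement. The only points worth making explicit in a written version are the degenerate cases $\tau=0$ and $\tau=t$ (both trivial) and the convention that the inequalities are read in $[0,\infty]$ when some integrals diverge; neither affects the argument.
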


\vskip 6pt Rearrangement inequality (\ref{Ju-Sam_3}) actually was
proved in 1971 by Jodeit and Torchinsky \cite{JT}. More exactly,
they proved the following theorem.
\begin{teo}\label{Jod_Tor} Let $T$ be a sublinear operator. Then
the operator $T$ is of type $(1,\infty)$ and $(2,2)$ if and only
if for some constant $K$ and each $f\in L^1(\R^n)+ L^2(\R^n)$
$$
\int_0^t (Tf)^*(u)^2\,du\le K\int_{1/t}^\infty f^{**}(u)^2\, du,
\quad t>0.
$$
\end{teo}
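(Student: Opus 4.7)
The plan is to prove the two implications separately. The forward implication follows the decomposition strategy used in Theorem~\ref{Jurkat}, while the reverse is a direct extraction of endpoint bounds from the rearrangement inequality.

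\textbf{Forward direction.} Assume $T$ is of type $(1,\infty)$ and $(2,2)$ with constants $M_1$ and $M_2$. Fix $t>0$ and set $\tau=1/t$. Choose a set $E\subset\R^n$ with $|E|=\tau$ capturing the large values of $f$, and split $f=g+h$ with $g=f\chi_E$. Standard identities give $\|g\|_1=\tau f^{**}(\tau)$ and $\|h\|_2^2=\int_\tau^\infty f^*(s)^2\,ds$, so the type hypotheses yield $\|Tg\|_\infty\le M_1\tau f^{**}(\tau)$ and $\|Th\|_2\le M_2\|h\|_2$. Sublinearity gives $|Tf|\le |Tg|+|Th|\le \|Tg\|_\infty+|Th|$ pointwise, and a short argument with distribution functions transfers this to $(Tf)^*(u)\le \|Tg\|_\infty+(Th)^*(u)$. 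Squaring and integrating on $(0,t)$ produces
\[
\int_0^t (Tf)^*(u)^2\,du\le \frac{2M_1^2}{t}f^{**}(1/t)^2+2M_2^2\int_{1/t}^\infty f^*(s)^2\,ds.
\]
The next step is to rewrite the right-hand side in terms of $f^{**}$. The inequality $f^*\le f^{**}$ handles the integral term. For the first term, the non-decrease of $uf^{**}(u)=\int_0^u f^*(s)\,ds$ gives $f^{**}(u)\ge f^{**}(1/t)/(tu)$ for $u\ge 1/t$, and integrating $1/u^2$ yields $f^{**}(1/t)^2/t\le \int_{1/t}^\infty f^{**}(u)^2\,du$. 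Both contributions are therefore controlled by $K\int_{1/t}^\infty f^{**}(u)^2\,du$ with $K=2(M_1^2+M_2^2)$, completing this direction.

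\textbf{Backward direction.} Letting $t\to\infty$ in the hypothesis gives $\|Tf\|_2^2\le K\int_0^\infty f^{**}(u)^2\,du\le 4K\|f\|_2^2$ by Hardy's inequality (Lemma~\ref{hardy} with $p=2$), which is the $(2,2)$ bound. For the $(1,\infty)$ bound, observe that $f^{**}(u)\le \|f\|_1/u$ for $f\in L^1$, so $\int_{1/t}^\infty f^{**}(u)^2\,du\le t\|f\|_1^2$. Using monotonicity of $(Tf)^*$, $t(Tf)^*(t)^2\le \int_0^t (Tf)^*(u)^2\,du\le Kt\|f\|_1^2$, whence $(Tf)^*(t)\le \sqrt{K}\,\|f\|_1$ uniformly in $t$, giving $\|Tf\|_\infty\le \sqrt{K}\,\|f\|_1$.

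\textbf{Main obstacle.} The only mildly subtle point is the lower bound $\int_{1/t}^\infty f^{**}(u)^2\,du\ge f^{**}(1/t)^2/t$, which is what allows $f^*$ to be replaced by $f^{**}$ on the right-hand side. Without it, the natural form of the forward direction would involve only $f^*$, which is strictly weaker than what is claimed. Everything else is bookkeeping with sublinearity, the Jurkat--Sampson splitting, and elementary manipulations of non-increasing rearrangements.
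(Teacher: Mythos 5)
Your proof is correct. The paper itself states this theorem only with a citation to Jodeit and Torchinsky and gives no proof, but your forward direction is exactly the splitting argument the paper does carry out for Theorem \ref{Jurkat} and Corollary \ref{Jurkat_2} (the Fourier transform being the special case of an operator of types $(1,\infty)$ and $(2,2)$), including the key lower bound $\int_{1/t}^\infty f^{**}(u)^2\,du\ge f^{**}(1/t)^2/t$ needed to absorb the $L^1$ term; your backward direction, extracting the $(2,2)$ bound via Hardy's inequality as $t\to\infty$ and the $(1,\infty)$ bound from $f^{**}(u)\le \|f\|_1/u$ together with monotonicity of $(Tf)^*$, is a correct and complete supplement that the survey omits.
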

\vskip 8pt

\centerline{\bf Estimates of Fourier transforms in Sobolev spaces}

\vskip 8pt

Let $r, n\in\N$ and $1< p\le 2$. Assume that $f\in W_p^r(\R^n).$
We have
$$
|\widehat{D_k^r f}(\xi)|=(2\pi |\xi_k|)^r |\widehat{f}(\xi)|
$$
and thus
\begin{equation}\label{6}
|\widehat{f}(\xi)|\asymp |\xi|^{-r}\sum_{k=1}^n |\widehat{D_k^r
f}(\xi)|.
\end{equation}
By the Hardy-Littlewood inequality, we have
\begin{equation}\label{f-sobolev_1}
\left(\int_{\R^n} |\xi|^{pr+n(p-2)}|\widehat{f}(\xi)|^p\,d\xi)\right)^{1/p}\le c\sum_{k=1}^n||D_k^r f||_p.
\end{equation}

If $f, g\in S_0(\R^n),$ then $(fg)^*(t)\le f^*(t/2)g^*(t/2)$ (see,
e.g., \cite{KPS}).  Further, recall that the rearrangement of the
function $\f(\xi)=1/|\xi|$ is equal to $(v_n/t)^{1/n}$. Thus,
using (\ref{6}) and applying Hardy-Littlewood-Paley inequality to
the derivatives, we obtain
\begin{equation}\label{f-sobolev_2}
\left(\int_0^\infty t^{pr/n+p-2}\widehat{f}^*(t)^p\,dt\right)^{1/p}\le c\sum_{k=1}^n||D_k^r f||_p.
\end{equation}
We observe that the right-hand sides of (\ref{f-sobolev_1}) and
(\ref{f-sobolev_2}) contain only the norms of non-mixed
derivatives. But we know (Theorem \ref{Smith}) that for
$1<p<\infty$
$$
\sum_{|s|=r}||D^s f||_p\le c\sum_{k=1}^n||D_k^r f||_p.
$$
Stress that this estimate fails for $p=1.$

For $p=1$ the Hardy-Littlewood inequality does not hold and it is impossible to use the reasonings described above.
In  fact, for the Fourier transforms of functions in $W^1_1(\R)$, the only available estimates are those based upon the obvious inequality
$||\widehat{g}||_\infty\le ||g||_1.$

On the other hand, by Hardy's inequality, for any $f\in H^1(\R^n)$\newline$(n\in\N)$
$$
\int_{\R^n} \frac{|\widehat{f}(\xi)|}{|\xi|^n}\,d\xi\le c||f||_{H^1}.
$$

\vskip 4pt

 It was first discovered by Bourgain \cite{Bour1} in 1981 that for
$n\ge 2$ the Fourier transforms of the derivatives of the
functions in Sobolev space $W_1^1(\R^n)$ satisfy Hardy's
inequality. More exactly, Bourgain considered the periodic case.
His studies were continued by Pelczy\'nski and  Wojciechowski
\cite{PeWo}. First, we have the following theorem (Bourgain; Pelczy\'nski and  Wojciechowski).

\vskip 4pt
\begin{teo}\label{Pelcz} Let $f\in W_1^r(\R^n)$ $(n\ge 2, \,
r\in\N).$ Then
\begin{equation}\label{pelcz}
\int_{\R^n} |\widehat{f}(\xi)||\xi|^{r-n}\,d\xi\le c
\sum_{|s|=r}||D^s f||_1.
\end{equation}
\end{teo}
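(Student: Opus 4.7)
The plan is to reduce the inequality, via the Hardy inequality for $H^{1}(\R^{n})$ recalled earlier in the text, to a single $H^{1}$--bound on the half--integer power of the Laplacian of $f$. Setting $g:=(-\Delta)^{r/2}f$ (so that $\widehat{g}(\xi)=(2\pi|\xi|)^{r}\widehat{f}(\xi)$), one has
$$
\int_{\R^{n}}|\widehat{f}(\xi)|\,|\xi|^{r-n}\,d\xi=(2\pi)^{-r}\int_{\R^{n}}\frac{|\widehat{g}(\xi)|}{|\xi|^{n}}\,d\xi\le c\,\|g\|_{H^{1}},
$$
and the theorem reduces to the estimate
\begin{equation}\label{pelred}
\|(-\Delta)^{r/2}f\|_{H^{1}(\R^{n})}\le c\sum_{|s|=r}\|D^{s}f\|_{1}.
\end{equation}

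For (\ref{pelred}) I would invoke the Riesz--transform characterization $\|h\|_{H^{1}}\sim\|h\|_{1}+\sum_{j=1}^{n}\|R_{j}h\|_{1}$ and examine each piece separately. The Fourier multiplier of $R_{j}g$ is $-i(2\pi)^{r}\xi_{j}|\xi|^{r-1}\widehat{f}(\xi)$. When $r$ is odd, the identity $|\xi|^{r-1}=\bigl(\sum_{k}\xi_{k}^{2}\bigr)^{(r-1)/2}=\sum_{|t|=(r-1)/2}\binom{(r-1)/2}{t}\xi^{2t}$ turns this symbol into a polynomial of degree $r$, so $R_{j}g$ is a finite linear combination of top--order derivatives $D^{\beta}f$ with $|\beta|=r$, controlled in $L^{1}$ by the right--hand side of (\ref{pelred}). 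When $r$ is even, the same observation applies to $g$ itself, whose symbol $(2\pi|\xi|)^{r}$ is polynomial, giving $g\in L^{1}$ with the required bound.

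What remains -- namely, $g\in L^{1}$ for odd $r$, and $R_{j}g\in L^{1}$ for even $r$ -- reduces, after factoring out a polynomial in derivatives of $f$ of order $r-1$ (respectively $r-2$), to the following homogeneous Bourgain lemma in dimension $n\ge 2$, which is the key input and the main obstacle of the proof:
$$
\|(-\Delta)^{1/2}h\|_{L^{1}(\R^{n})}\le c\sum_{l=1}^{n}\|\partial_{l}h\|_{1},\qquad n\ge 2.
$$
(Equivalently, $\|R_{k}\partial_{j}h\|_{L^{1}}\le c\sum_{l}\|\partial_{l}h\|_{1}$ for all $j,k$, using $(-\Delta)^{1/2}=\sum_{l}R_{l}\partial_{l}$.) This estimate fails on the line, where $(-\Delta)^{1/2}$ factors through the Hilbert transform, which is unbounded on $L^{1}$; for $n\ge 2$ its proof - originally given by Bourgain via a delicate dyadic Fourier analysis and later revisited by Pelczy\'nski--Wojciechowski through an atomic decomposition of $h$ in $\dot{W}^{1}_{1}(\R^{n})$ combined with sharp Riesz--kernel estimates exploiting that $S^{n-1}$ has positive dimension - is the genuinely multi--dimensional heart of the argument. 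Once Bourgain's lemma is in hand, iterating it together with integer--order differentiations of $f$ yields (\ref{pelred}) for every $r$, and hence Theorem \ref{Pelcz}.
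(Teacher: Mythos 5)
Your reduction is cleanly organized, but it funnels the entire theorem into a single ``key lemma'' that is in fact false, so the approach cannot be completed by supplying a proof of that lemma. The inequality $\|(-\Delta)^{1/2}h\|_{L^1(\R^n)}\le c\sum_{l}\|\partial_l h\|_{L^1}$ fails in \emph{every} dimension, not just on the line. Take $h_\delta=\chi_B*\rho_\delta$, a mollification at scale $\delta$ of the indicator of the unit ball $B\subset\R^n$. Then $\|\nabla h_\delta\|_1\le \operatorname{Per}(B)$ uniformly in $\delta$, while the singular-integral representation $(-\Delta)^{1/2}u(x)=c_n\,\mathrm{p.v.}\int_{\R^n}\bigl(u(x)-u(y)\bigr)|x-y|^{-n-1}\,dy$ shows, with no cancellation available, that $|(-\Delta)^{1/2}h_\delta(x)|\ge c/\operatorname{dist}(x,\partial B)$ for $x$ outside $B$ with $2\delta\le\operatorname{dist}(x,\partial B)\le 1/2$; integrating over this exterior collar gives $\|(-\Delta)^{1/2}h_\delta\|_1\ge c\log(1/\delta)\to\infty$. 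Equivalently, since $R_j(-\Delta)^{1/2}f=\pm\partial_j f$, the Riesz-transform characterization you invoke shows that your scheme for $r=1$ is \emph{exactly} the assertion that $\nabla f\in H^1(\R^n)$ with $\|\nabla f\|_{H^1}\lesssim\|\nabla f\|_{L^1}$, i.e.\ the embedding $\dot W_1^1\subset \dot F^1_{1,2}$ --- and this is false. Note that for $r=1$ the right-hand side of your lemma coincides with the right-hand side of (\ref{pelcz}), so there is no slack from mixed derivatives to exploit.

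The conceptual point is that Hardy's inequality $\int|\widehat g(\xi)||\xi|^{-n}d\xi\lesssim\|g\|_{H^1}$ is a one-way street: finiteness of its left-hand side for $g=(-\Delta)^{r/2}f$ --- which is precisely what (\ref{pelcz}) asserts --- neither requires nor implies $g\in H^1$, nor even $g\in L^1$. Indeed, for the example above $\int|\widehat{h_\delta}(\xi)||\xi|^{1-n}d\xi$ stays bounded (by the decay $|\widehat{\chi_B}(\xi)|=O(|\xi|^{-(n+1)/2})$) while $\|(-\Delta)^{1/2}h_\delta\|_1$ diverges. Consequently the attribution of your lemma to Bourgain and to Pelczy\'nski--Wojciechowski is a misreading: what they prove is the Fourier-side inequality itself, by routes that avoid $H^1$ altogether. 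Bourgain analyzes $\widehat f$ directly on the dyadic shells $D_\nu$; Pelczy\'nski--Wojciechowski obtain, via a molecular decomposition, the bound $\sum_\nu\|f_\nu\|_{n/(n-1)}\lesssim\|\nabla f\|_1$ for the Littlewood--Paley pieces $f_\nu$ of $f$, from which (\ref{pelcz}) for $r=1$ follows by Hausdorff--Young and H\"older on each shell (this is the content of Theorem \ref{Bourg}, which implies Theorem \ref{Pelcz}). The survey states Theorem \ref{Pelcz} without proof, so the task is to reproduce one of those arguments; any correct proof must engage with that shell-by-shell (or atomic) analysis rather than pass through an $L^1$ or $H^1$ bound on $(-\Delta)^{r/2}f$.
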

\vskip 4pt

Equivalently,
$$
\sum _{|s|=r}\int_{\mathbb{R}^n}\frac{|\widehat{D^s
f}(\xi)|}{|\xi|^n}d\xi \le c \sum _{|\alpha|=r}\|D^\alpha f\|_1.
$$

This is  Hardy type inequality. Bourgain \cite{Bour2} in 1985 and
Pelczy\'nski and  Wojciechowski \cite{PeWo} in 1993 proved also the following
theorem.

\begin{teo}\label{Bourg} If $f\in W_1^1(\R^n)$ $(n\ge 2)$, then
\begin{equation}\label{bourg}
\sum_{\nu\in
\Z}\left(\int_{D_\nu}|\widehat{f}(\xi)|^n\,d\xi\right)^{1/n}\le
c\sum_{k=1}^n||D_k^r f||_1,
\end{equation}
where
$$
D_\nu=\{\xi\in \R^n: 2^{\nu-1}< |\xi|\le 2^\nu\} \quad (\nu\in
\Z).
$$
\end{teo}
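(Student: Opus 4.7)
The plan is to reduce the desired $\ell^1_\nu(L^n(D_\nu))$-estimate for $\widehat f$ to a uniform bound on normalized Sobolev atoms, using an angular decomposition of frequency space together with Hausdorff--Young on the gradient. First, partition $\R^n\setminus\{0\}$ into the sectors $\Gamma_k = \{\xi : |\xi_k| \ge |\xi|/\sqrt{n}\}$, $k=1,\dots,n$. On $D_\nu \cap \Gamma_k$ one has $|\xi_k| \asymp |\xi| \asymp 2^\nu$, and the identity $\widehat f(\xi) = \widehat{D_k f}(\xi)/(2\pi i \xi_k)$ yields
$$
\|\widehat f\|_{L^n(D_\nu)} \le c\,2^{-\nu}\sum_{k=1}^n \|\widehat{D_k f}\|_{L^n(D_\nu)}\,.
$$
Thus the theorem will follow once the relevant dyadic sums on the right can be controlled atom by atom.

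The core step is the uniform atomic estimate: for a $W_1^1$-atom $a$ supported in a ball $B$ of radius $r$, with $\int a = 0$, $\|\nabla a\|_1 \le 1$ and $\|\nabla a\|_\infty \le 1/|B|$, one should have $\sum_\nu \|\widehat a\|_{L^n(D_\nu)} \le c$ independently of $r$. Split the sum at the transition scale $2^\nu \sim 1/r$. For low frequencies $2^\nu \le 1/r$, I would use the vanishing-moment inequality
$$
|\widehat a(\xi)| = \Big|\int a(x)\bigl(e^{-2\pi ix\cdot\xi} - 1\bigr)\,dx\Big| \le 2\pi |\xi|\int |x|\,|a(x)|\,dx \le c\,r^2|\xi|\,,
$$
where $\|a\|_1 \le c\,r$ follows from Sobolev's inequality $\|a\|_{n/(n-1)} \le c\|\nabla a\|_1$ combined with H\"older on $B$; the crude estimate $\|\widehat a\|_{L^n(D_\nu)} \le |D_\nu|^{1/n}\|\widehat a\|_\infty \le c\,r^2 2^{2\nu}$ then gives a geometric sum of $O(1)$ over $2^\nu \le 1/r$. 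For high frequencies $2^\nu \ge 1/r$ the $L^\infty$ bound is insufficient: instead, interpolate $\|\nabla a\|_1 \le 1$ and $\|\nabla a\|_\infty \le c/r^n$ to obtain $\|\nabla a\|_{n/(n-1)} \le c/r$, and apply Hausdorff--Young (Theorem \ref{Haus-Young}), which is available precisely because $n \ge 2$ ensures $n/(n-1) \in (1,2]$, to get $\|\widehat{D_k a}\|_n \le c/r$. Then by the angular reduction and discrete H\"older in $\nu$ with exponents $n' = n/(n-1)$ and $n$,
$$
\sum_{2^\nu \ge 1/r} 2^{-\nu}\|\widehat{D_k a}\|_{L^n(D_\nu)} \le \bigg(\sum_{2^\nu \ge 1/r} 2^{-\nu n'}\bigg)^{\!1/n'}\|\widehat{D_k a}\|_n \le c\,r \cdot \frac{c}{r} = c\,.
$$

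Theorem \ref{Bourg} then follows by writing $f = \sum_j \lambda_j a_j$ with $\sum_j |\lambda_j| \le c\|\nabla f\|_1$, applying the atomic bound inside the triangle inequality for the norm $\sum_\nu \|\,\cdot\,\|_{L^n(D_\nu)}$, and summing over $j$. The main obstacle is the existence of the atomic decomposition of $W_1^1(\R^n)$ with the three-part normalization (bounded gradient $L^1$-norm, bounded gradient $L^\infty$-norm, and mean-zero) on each atom; such a decomposition requires a Calder\'on--Zygmund-type construction applied to $\nabla f$ in the spirit of the atomic decomposition of the Hardy space $H^1$, and it is this construction, rather than the atom estimate above, which constitutes the technical heart of the Bourgain and Pelczy\'nski--Wojciechowski arguments.
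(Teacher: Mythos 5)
The paper states Theorem \ref{Bourg} without proof (it is a survey item, attributed to Bourgain and to Pelczy\'nski--Wojciechowski), so there is no internal argument to compare yours against. The frequency-side half of your proposal is correct and cleanly executed: the angular decomposition into the sectors $\Gamma_k$, the identity $\widehat{f}(\xi)=\widehat{D_kf}(\xi)/(2\pi i\xi_k)$ on $D_\nu\cap\Gamma_k$, the low-frequency bound (where, in fact, the moment condition on the atom is not even needed: $\|\widehat a\|_{L^n(D_\nu)}\le c\,2^{\nu}\|a\|_1\le c\,r2^{\nu}$ already sums to $O(1)$ over $2^{\nu}\le 1/r$), and the high-frequency bound via $\|\nabla a\|_{n/(n-1)}\le c/r$, Hausdorff--Young, and H\"older in $\nu$. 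This correctly isolates where $n\ge2$ enters, and the uniform atom estimate is verifiably right.

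The genuine gap is the atomic decomposition $f=\sum_j\lambda_ja_j$ with $\sum_j|\lambda_j|\le c\|\nabla f\|_1$ and $\|\nabla a_j\|_\infty\le|B_j|^{-1}$. You describe it as ``a Calder\'on--Zygmund-type construction applied to $\nabla f$ in the spirit of $H^1$,'' but that construction does not go through when $\nabla f$ is merely in $L^1$. Running the $H^1$-style argument over the sets $\Omega_k=\{M(\nabla f)>2^k\}$ produces pieces whose coefficients sum to $\sum_k 2^k|\Omega_k|\asymp\|M(\nabla f)\|_1$, which is infinite for generic $\nabla f\in L^1$ precisely because the maximal operator is unbounded on $L^1$; replacing $\Omega_k$ by $\{|\nabla f|>2^k\}$ destroys the Lipschitz control of the good parts; and a single-height Calder\'on--Zygmund decomposition of $\nabla f$ yields bad parts that are mean-zero $L^1$ pieces but are not gradients of functions supported in the cubes, so they cannot be reintegrated into Sobolev atoms. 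Overcoming this obstruction -- exploiting that $\nabla f$ is a gradient and that $n\ge2$ -- is the entire content of the theorem; it is exactly what Bourgain's argument and the Pelczy\'nski--Wojciechowski ``molecular decomposition'' are built to do. As written, your proposal reduces the statement to an unproved assertion that is at least as hard as the statement itself.
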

\vskip 4pt
We observe that  by Sobolev-Gagliardo-Nirenberg
theorem, $$W_1^1(\R^n)\subset L^{n'}(\R^n),\quad n\ge 2.$$
Applying Hausdorff-Young inequality, we obtain that for any  $f\in
W_1^1(\R^n)$ its Fourier transform belongs to $L^n(\R^n).$ Of
course, (\ref{bourg}) gives a stronger statement. \vskip 4pt

\vskip 4pt We return to Theorem \ref{Pelcz}. In comparison with
inequality (\ref{f-sobolev_1}), the right-hand side of
(\ref{pelcz}) contains all the derivatives of order $r.$ The
$L^1-$norms of mixed derivatives cannot be estimated by the
$L^1-$norms of pure derivatives. Nevertheless, we proved
\cite{K1997}  that the  right-hand side of inequality
(\ref{pelcz}) can be replaced by the sum of non-mixed derivatives
of the same order.

\begin{teo}\label{Kol97} Let $r_1,...,r_n\in \N$ $(n\ge 2)$ and
let
$$
r=n\left(\sum_{k=1}^n \frac{1}{r_k}\right)^{-1}.
$$
If
 $f\in W_1^{r_1,\dots,r_n} (\mathbb{R}^n)\,\,\, (n\ge 2)$,
 then
\begin{equation}\label{kol_1}
\int_{\mathbb{R}^n}|\widehat{f}(\xi)|\left(\sum_{k=1}^n|\xi_k|^{r_j}\right)^{1-n/r}d\xi
 \le
c \sum _{k=1}^n\|D_k^{r_k} f\|_1.
\end{equation}
\end{teo}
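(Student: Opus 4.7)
The plan is to reduce \eqref{kol_1} to an anisotropic shell--by--shell estimate and then prove that estimate by adapting the method behind Theorem~\ref{Bourg}. Set $N(\xi)=\sum_{k=1}^n|\xi_k|^{r_k}$, so that the weight in \eqref{kol_1} equals $N(\xi)^{1-n/r}$. Introduce the anisotropic dilations $\delta_t\xi=(t^{1/r_1}\xi_1,\ldots,t^{1/r_n}\xi_n)$, which satisfy $N(\delta_t\xi)=tN(\xi)$ and have Jacobian $t^{n/r}$, and decompose $\R^n\setminus\{0\}$ into the anisotropic dyadic shells
\[
A_\nu=\{\xi\in\R^n:2^\nu\le N(\xi)<2^{\nu+1}\},\qquad\nu\in\Z,
\]
so that $|A_\nu|\asymp 2^{\nu n/r}$. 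On each shell $N(\xi)^{1-n/r}\asymp 2^{\nu(1-n/r)}$, and H\"older's inequality with exponents $n,n'$ gives
\begin{equation*}
\int_{A_\nu}|\widehat f(\xi)|\,N(\xi)^{1-n/r}\,d\xi\le c\,2^{\nu(1-n/r)}|A_\nu|^{1/n'}\|\widehat f\|_{L^n(A_\nu)}\le c\,2^{\nu(1-1/r)}\|\widehat f\|_{L^n(A_\nu)}.
\end{equation*}
Summation over $\nu\in\Z$ therefore reduces \eqref{kol_1} to the anisotropic Bourgain-type inequality
\begin{equation*}
\sum_{\nu\in\Z}2^{\nu(1-1/r)}\Big(\int_{A_\nu}|\widehat f(\xi)|^n\,d\xi\Big)^{1/n}\le c\sum_{k=1}^n\|D_k^{r_k}f\|_1. \tag{$\ast$}
\end{equation*}

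To prove $(\ast)$ I would mimic the Bourgain--Pelczy\'nski--Wojciechowski argument of Theorem~\ref{Bourg} directly in the anisotropic framework. Both endpoints needed for their interpolation are available in the present setting: by Plancherel together with the elementary identity $N(\xi)|\widehat f(\xi)|\asymp\sum_k|\widehat{D_k^{r_k}f}(\xi)|$ one controls $\|N(\cdot)\widehat f\|_2$ in terms of $\sum_k\|D_k^{r_k}f\|_2$, while the trivial bound $\|\widehat f\|_\infty\le\|f\|_1$ controls the $L^\infty$ norm of $\widehat f$ on each shell. The core step is an atomic/analytic interpolation carried out shell by shell, with the Euclidean dilations replaced throughout by the $\delta_t$. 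The scaling computation $\|D_k^{r_k}(f\circ\delta_\lambda)\|_1=\lambda^{1-n/r}\|D_k^{r_k}f\|_1$ makes both sides of $(\ast)$ transform identically under $\delta_\lambda$, which confirms that the exponent $1-1/r$ is correct and that the Bourgain--Pelczy\'nski--Wojciechowski scheme has a chance to go through.

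The principal obstacle is $(\ast)$ itself. A naive reduction to Theorem~\ref{Pelcz} or Theorem~\ref{Bourg} via Theorem~\ref{Smith} is impossible, since that would require estimating $L^1$ norms of mixed derivatives of $f$ by $L^1$ norms of pure ones, which is forbidden at $p=1$ by Ornstein's non-inequality. What does transfer to the anisotropic setting is the pointwise multiplier bound $\prod_k|\xi_k|^{s_k}\le cN(\xi)^{\sum_ks_k/r_k}$ whenever $\sum_ks_k/r_k\le 1$; this serves as a substitute for Smith's lemma \emph{at the level of Fourier multipliers} and permits each step of the atomic construction of Bourgain and Pelczy\'nski--Wojciechowski to be carried out over the anisotropic shells $A_\nu$ using only the pure-derivative norms $\|D_k^{r_k}f\|_1$. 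Once $(\ast)$ is established, substituting it back into the H\"older estimate above and summing over $\nu\in\Z$ yields \eqref{kol_1}; the remaining dyadic summation and scaling verification are routine.
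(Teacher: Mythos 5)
Your reduction is correctly executed as far as it goes: the H\"older step on each shell, the volume computation $|A_\nu|\asymp 2^{\nu n/r}$, and the scaling verification are all right, and the shell estimate $(\ast)$ would indeed imply \eqref{kol_1}. The difficulty is that $(\ast)$ is where the entire content of the theorem is concentrated, and you do not prove it. $(\ast)$ is an anisotropic, weighted strengthening of Theorem \ref{Bourg}, i.e.\ of the Bourgain--Pelczy\'nski--Wojciechowski theorem, which is itself a deep result; saying that ``the core step is an atomic/analytic interpolation carried out shell by shell'' with Euclidean dilations replaced by $\delta_t$ is a plan, not an argument. Moreover, the device you offer as a substitute for Smith's lemma --- the pointwise multiplier bound $\prod_k|\xi_k|^{s_k}\le cN(\xi)^{\sum_k s_k/r_k}$ --- cannot do the required work: a pointwise inequality on the Fourier side gives no control of $\|D^sf\|_1$ by $\sum_k\|D_k^{r_k}f\|_1$ (Ornstein's non-inequality shows that no such control exists), while the Bourgain/Pelczy\'nski--Wojciechowski construction behind Theorems \ref{Pelcz} and \ref{Bourg} genuinely consumes the $L^1$ norms of the full family of derivatives. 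Thus the step that eliminates the mixed derivatives --- precisely what distinguishes \eqref{kol_1} from \eqref{pelcz} --- is asserted rather than established, and this is a genuine gap.

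For comparison, the proof in \cite{K1997} described in the paper avoids Fourier-side interpolation altogether. It combines (a) the embedding theorem bounding the Besov-type seminorms $\bigl(\int_0^\infty[h^{-\a_j}\o_j^{\bar r_j}(f;h)_q]^p\,dh/h\bigr)^{1/p}$ by $\sum_j\|D_j^{r_j}f\|_p$, valid for $p=1$, $n\ge2$ --- this is the hard part, where rearrangement techniques dispose of the mixed-derivative obstruction --- with (b) the elementary Lemma \ref{mod_estim}, which estimates $\widehat f^{\,*}(t)$ by partial moduli of continuity in $L^q$, $1\le q\le 2$, followed by Hardy-type inequalities. If you wish to salvage your route, you must either prove $(\ast)$ in full (a substantial piece of work in its own right) or fall back on the Lorentz estimate \eqref{kol_3}, from which \eqref{kol_1} follows for $r<n$ via the Hardy--Littlewood rearrangement inequality because $\bigl(N^{1-n/r}\bigr)^*(t)\asymp t^{r/n-1}$; but \eqref{kol_3} is itself obtained by the moduli-of-continuity route, so this does not avoid the real difficulty.
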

\vskip 4pt

In the isotropic case $r_1=\cdots =r_n=r$ inequality (\ref{kol_1}) has the form
\begin{equation}\label{kol_2}
\int_{\mathbb{R}^n}|\widehat{f}(\xi)|\xi|^{r-n}d\xi
 \le
c \sum _{k=1}^n\|D_k^{r} f\|_1.
\end{equation}
In contrast to (\ref{pelcz}), the right-hand side of (\ref{kol_2}) contains only the norms of non-mixed derivatives.

For any $s=(s_1,...,s_n)$ with $|s|=r$
$$
|\widehat{D^s f}(\xi)|=(2\pi)^r|\widehat{f}(\xi)|\prod_{j=1}^n
|\xi_j|^{s_j}\le (2\pi |\xi|)^r|\widehat{f}(\xi)|.
$$
Thus, we obtain
\begin{teo}
 Let $f\in C_0^\infty(\mathbb{R}^n)~(n\ge 2)$. Then for any  $r\in \mathbb{N}$
$$
\sum _{|s|=r}\int_{\mathbb{R}^n}\frac{|\widehat{D^s f}(\xi)|}{|\xi|^n}d\xi
\le c \sum _{k=1}^n\|D_k^r f\|_1.
$$
\end{teo}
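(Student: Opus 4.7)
The plan is to reduce the statement directly to inequality (\ref{kol_2}), the isotropic specialization of Theorem \ref{Kol97}, using only an elementary pointwise comparison of Fourier transforms.

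First, I would recall the pointwise bound displayed in the paragraph immediately preceding the theorem: for any multi-index $s=(s_1,\dots,s_n)$ with $|s|=r$,
$$
|\widehat{D^s f}(\xi)| = (2\pi)^{r}|\widehat{f}(\xi)|\prod_{j=1}^{n}|\xi_j|^{s_j} \le (2\pi)^{r}|\xi|^{r}\,|\widehat{f}(\xi)|,
$$
which follows from $|\xi_j|\le |\xi|$ and $\sum_j s_j = r$. This is the only analytic input aside from the already-proved Kolyada theorem.

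Next I would divide this bound by $|\xi|^{n}$ and integrate over $\R^{n}$. Writing $|\xi|^{r-n}=|\xi|^{r}/|\xi|^{n}$, this yields, for each fixed $s$ with $|s|=r$,
$$
\int_{\R^{n}}\frac{|\widehat{D^s f}(\xi)|}{|\xi|^{n}}\,d\xi \le (2\pi)^{r}\int_{\R^{n}}|\widehat{f}(\xi)|\,|\xi|^{r-n}\,d\xi.
$$
Now I would invoke the isotropic case (\ref{kol_2}) of Theorem \ref{Kol97}, which applies since $f\in C_0^\infty(\R^n)\subset W_1^{r,\dots,r}(\R^n)$, to bound the right-hand side by $c\sum_{k=1}^{n}\|D_k^{r}f\|_{1}$.

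Finally, I would sum over all multi-indices $s$ with $|s|=r$. The number of such multi-indices equals $\binom{n+r-1}{r}$, a constant depending only on $n$ and $r$; absorbing it into the constant gives the claimed inequality. The proof has no genuine obstacle: the content is entirely contained in Theorem \ref{Kol97}, and the present theorem is just the observation that (\ref{kol_2}) dominates not only the trivial Hardy-type integral of $\widehat f$ itself but also the Hardy-type integrals of every Fourier transform $\widehat{D^s f}$ with $|s|=r$, by the trivial bound $\prod_j|\xi_j|^{s_j}\le |\xi|^{r}$.
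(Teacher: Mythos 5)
Your proof is correct and is exactly the argument the paper intends: the displayed pointwise bound $|\widehat{D^s f}(\xi)|=(2\pi)^r|\widehat f(\xi)|\prod_j|\xi_j|^{s_j}\le(2\pi|\xi|)^r|\widehat f(\xi)|$ immediately preceding the theorem, followed by division by $|\xi|^n$, an appeal to the isotropic inequality (\ref{kol_2}), and summation over the finitely many multi-indices with $|s|=r$. Nothing further is needed.
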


\vskip 4pt In \cite{K1997}, we proved also the following rearrangement inequality.
\begin{teo}\label{Kol97_2} Let $r_1,...,r_n\in \N$ $(n\ge 2)$ and
let
$$
r=n\left(\sum_{k=1}^n \frac{1}{r_k}\right)^{-1}<n.
$$
If
 $f\in W_1^{r_1,\dots,r_n} (\mathbb{R}^n)\,\,\, (n\ge 2)$, then
\begin{equation}\label{kol_3}
||\widehat{f}||_{n/r,1}
 \le
c \sum _{k=1}^n\|D_k^{r_k} f\|_1.
\end{equation}
\end{teo}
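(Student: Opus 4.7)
The plan is to combine the weighted $L^{1}$ bound of Theorem~\ref{Kol97} with the anisotropic pointwise decay of $\widehat f$ to extract the Lorentz norm estimate. Set $A:=\sum_{k}\|D_{k}^{r_{k}}f\|_{1}$ and $N(\xi):=\bigl(\sum_{k}|\xi_{k}|^{r_{k}}\bigr)^{1/r}$, and note that the anisotropic ball $B^{N}_{u}:=\{N\le u\}$ has Lebesgue measure $c_{1}u^{n}$.

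I would first record two basic ingredients. The pointwise bound
$$
|\widehat f(\xi)|\le \frac{cA}{N(\xi)^{r}}
$$
follows by summing the identities $(2\pi|\xi_{k}|)^{r_{k}}|\widehat f(\xi)|=|\widehat{D_{k}^{r_{k}}f}(\xi)|\le\|D_{k}^{r_{k}}f\|_{1}$ over $k$, and immediately gives the weak-type estimate $\widehat f^{*}(t)\le cA\,t^{-r/n}$. Via the layer-cake identity $N(\xi)^{r-n}=(n-r)\int_{N(\xi)}^{\infty}u^{r-n-1}du$ and Fubini, Theorem~\ref{Kol97} is equivalent to
$$
\int_{0}^{\infty}u^{r-n-1}\,\Phi(u)\,du\le cA,\qquad \Phi(u):=\int_{B^{N}_{u}}|\widehat f|\,d\xi.
$$
Moreover, the weight $w(\xi)=N(\xi)^{r-n}$ has decreasing rearrangement $w^{*}(t)=c_{0}\,t^{r/n-1}$, so the integrand $\int|\widehat f|\,w\,d\xi$ already matches in profile the defining integral of $\|\widehat f\|_{n/r,1}=\int_{0}^{\infty}t^{r/n-1}\widehat f^{*}(t)\,dt$.

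The key step is to convert the integrated weighted bound into a bound on $\widehat f^{**}$. For each $t>0$, the pointwise decay forces the super-level set $\{|\widehat f|>\widehat f^{*}(t)\}$, of measure $t$, to lie inside $B^{N}_{R_{t}}$ with $R_{t}:=(cA/\widehat f^{*}(t))^{1/r}\ge t^{1/n}$; consequently
$$
\Phi(R_{t})\;\ge\;\int_{\{|\widehat f|>\widehat f^{*}(t)\}}|\widehat f|\,d\xi\;=\;t\,\widehat f^{**}(t).
$$
Substituting $u=R_{t}$ in the integrated bound, changing variables to $t$, and invoking Hardy's inequality (Lemma~\ref{hardy}) to absorb the discrepancy $R_{t}\ge t^{1/n}$, one obtains
$$
\int_{0}^{\infty}t^{r/n-1}\widehat f^{**}(t)\,dt\le cA.
$$
Since $\widehat f^{*}\le\widehat f^{**}$, this yields the desired Lorentz estimate $\|\widehat f\|_{n/r,1}\le cA$.

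The main obstacle is the last substitution: $R_{t}$ may exceed $t^{1/n}$ unpredictably, and a naive pairing of Theorem~\ref{Kol97} with the weak-type bound only recovers $L^{n/r,\infty}$. Overcoming this requires using the monotonicity of both $\Phi$ and $R_{t}$ simultaneously together with an integration-by-parts that exploits the full strength of Theorem~\ref{Kol97} across all scales at once — the telescoping argument at the heart of \cite{K1997}. The restriction $n\ge 2$ is inherited from Theorem~\ref{Kol97} (and, through it, from Ornstein-type obstructions in one dimension) and cannot be weakened.
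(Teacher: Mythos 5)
Your scheme rests entirely on two facts about $\widehat f$: the pointwise decay $|\widehat f(\xi)|\le cA\,N(\xi)^{-r}$ and the weighted bound of Theorem~\ref{Kol97}, i.e. $\int_0^\infty u^{r-n-1}\Phi(u)\,du\le cA$. The obstacle you flag at the end is not a technicality that Hardy's inequality or a clever integration by parts can absorb: these two facts together are \emph{strictly weaker} than the conclusion, so no manipulation of $\Phi$ and $R_t$ alone can close the argument. To see this, take (in the isotropic case $N(\xi)=|\xi|$) integers $j_1<\cdots<j_m$ and let $g=\sum_{k=1}^m 2^{-rj_k}\chi_{S_k}$, where $S_k\subset\{|\xi|\asymp 2^{j_k}\}$ has measure $m^{-1}2^{nj_k}$. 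Then $g(\xi)\le c|\xi|^{-r}$ everywhere and $\int g(\xi)|\xi|^{r-n}d\xi\asymp\sum_k 2^{-rj_k}2^{(r-n)j_k}m^{-1}2^{nj_k}=1$, yet, since $|\{g>y\}|\asymp m^{-1}2^{nj_k}$ for $y$ just below $2^{-rj_k}$,
$$
\|g\|_{n/r,1}\asymp\int_0^\infty|\{g>y\}|^{r/n}\,dy\asymp\sum_{k=1}^m\left(m^{-1}2^{nj_k}\right)^{r/n}2^{-rj_k}=m^{1-r/n}\longrightarrow\infty .
$$
On this example $R_t/t^{1/n}\asymp m^{1/n}$ uniformly, and the loss in your substitution $u=R_t$ is exactly $(R_t/t^{1/n})^{n-r}=m^{1-r/n}$; the "telescoping argument" you invoke to absorb it does not exist in \cite{K1997} and cannot exist at this level of generality. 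Any correct proof must use more information about $\widehat f$ than the pair (pointwise decay, weighted $L^1$ bound); the naive pairing really does stop at $L^{n/r,\infty}$.

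The paper's proof is of a different nature and does not pass through Theorem~\ref{Kol97} at all (the two inequalities are parallel outputs of the same machinery, not consequences of one another). One first proves the difference-norm estimate: choosing $q\in\bigl(1,\min(2,\,n/(n-r))\bigr)$ and setting $\varkappa=1-\frac nr\bigl(1-\frac1q\bigr)>0$, $\a_j=\varkappa r_j$, one shows $\sum_j\int_0^\infty h^{-\a_j}\o_j^{r_j}(f;h)_q\,\frac{dh}{h}\le c\sum_j\|D_j^{r_j}f\|_1$ for $n\ge2$; this embedding into a Besov space over $L^q$ with $q>1$ is the hard step. One then applies Lemma~\ref{mod_estim}, the rearrangement estimate $\widehat f^{\,*}(t)\le ct^{1/q-1}\sum_j\o_j^{r_j}(f;t^{-s_j})_q$, and integrates against $t^{r/n-1}dt$; the exponents match because $\varkappa r/n+1/q'=r/n$ (this is Theorem~\ref{Four_1}). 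Two further remarks: in the sub-case $n/(n-r)<2$ your goal has a genuinely short proof via $\|f\|_{n/(n-r),1}\le cA$ and the Lorentz Hausdorff--Young inequality, but that route is unavailable precisely in the critical cases such as $W_1^1(\R^2)$; and the paper explicitly lists the existence of a \emph{direct} rearrangement proof of \eqref{kol_3} -- the kind you are attempting -- as an open problem.
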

\vskip 4pt Observe that in conditions of this theorem we have
\begin{equation}\label{kol_4}
||f||_{n/(n-r),1}
 \le
c \sum _{k=1}^n\|D_k^{r_k} f\|_1
\end{equation}
(it was proved in our work \cite{K1993}). If $n/(n-r)<2$
(for example, if $n\ge 3$ and $r=1$), then (\ref{kol_3}) follows
immediately from (\ref{kol_4}), if we apply the inequality
$$
||\widehat{f}||_{p',r}\le ||f||_{p,r} \quad (1<p<2, \,\, 1\le r\le
\infty).
$$

These arguments cannot be applied, for example, to the space
$W_1^1(\R^2).$ We have $W_1^1(\R^2)\subset L^{2,1}(\R^2)$, and
this embedding is sharp. As we know, the condition $f\in
L^{2,1}(\R^2)$ does not imply that  $\widehat f\in L^{2,1}(\R^2)$.
However, by Theorem \ref{Kol97_2}, for functions in $W_1^1(\R^2)$
their Fourier transforms belong to $ L^{2,1}(\R^2)$.

\vskip 8pt

In our work \cite{K2001} we extended Theorems \ref{Kol97}
and \ref{Kol97_2} to the fractional Sobolev spaces
$L_p^{\a_1,...,\a_n}(\R^n).$
In this work we proved also the following theorem.
\begin{teo}\label{Product} Assume that $r_j\in\N ~~(j=1,...,n;
ñ\ge 2)$. Then
$$
\left(\int_{R^n}|\widehat{f}(\xi)|^n\prod_{j=1}^n|\xi_j|^{r_j-1}\,d\xi\right)^{1/n}
\le c \prod_{j=1}^n||D_j^{r_j}f||_1
$$
for each $f\in W_1^{r_1,...,r_n}(\R^n).$
\end{teo}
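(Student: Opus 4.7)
My plan is to reduce, via the Fourier-symbol identities for derivatives, to a multilinear estimate on products of Fourier transforms, and then to exploit the common-symbol constraint through a dyadic decomposition adapted to the dominant coordinate. Setting $h_j=D_j^{r_j-1}f$ and using $\widehat{h_j}(\xi)=(2\pi i\xi_j)^{r_j-1}\widehat f(\xi)$, I obtain the pointwise identity
$$
|\widehat f(\xi)|^n\prod_{j=1}^n|\xi_j|^{r_j-1}=(2\pi)^{n-\sum_j r_j}\prod_{j=1}^n|\widehat{h_j}(\xi)|,
$$
so the theorem follows once one proves the multilinear bound $\int_{\R^n}\prod_j|\widehat{h_j}(\xi)|\,d\xi\le C\prod_j\|D_jh_j\|_1$. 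Note that $D_jh_j=D_j^{r_j}f$, and the functions $\widehat{h_j}$ are tightly coupled through $\widehat{h_j}/\xi_j^{r_j-1}=c\,\widehat f$ with the same value of $c\,\widehat f(\xi)$ for every $j$.

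For the multilinear estimate I decompose $\R^n$ into dyadic shells $D_\nu=\{2^{\nu-1}<|\xi|\le 2^\nu\}$ and further into subregions $D_{\nu,k}=D_\nu\cap\{|\xi_k|=\max_j|\xi_j|\}$. On $D_{\nu,k}$ one has $|\xi_k|\asymp 2^\nu$ and the sharp pointwise estimate
$$
|\widehat f(\xi)|\le\frac{|\widehat{D_k^{r_k}f}(\xi)|}{(2\pi|\xi_k|)^{r_k}}\le c\,2^{-\nu r_k}\|D_k^{r_k}f\|_1.
$$
Via the compatibility $\widehat{h_j}=c\,\xi_j^{r_j-1}\widehat f$, every factor $|\widehat{h_j}|$ on $D_{\nu,k}$ is estimated in terms of the $k$-direction pure derivative $\|D_k^{r_k}f\|_1$. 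A multilinear H\"older inequality across the indices $k=1,\dots,n$ then assembles the contributions from the $n$ subregions, and the resulting dyadic sum in $\nu$ is controlled by the Bourgain--Pelczy\'nski--Wojciechowski estimate (Theorem \ref{Bourg}) applied to $f$.

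The main obstacle is that a naive application of H\"older to $\int\prod|\widehat{h_j}|\,d\xi$, followed by Hausdorff--Young ($\|\widehat{h_j}\|_n\le\|h_j\|_{n/(n-1)}$) and anisotropic Gagliardo--Nirenberg ($\|h_j\|_{n/(n-1)}\le C\prod_k\|D_kh_j\|_1^{1/n}$), produces mixed derivatives $D_kD_j^{r_j-1}f$ with $k\neq j$ that, by Ornstein's non-inequality, cannot be controlled in $L^1$ by the pure partial derivatives $\|D_j^{r_j}f\|_1$. The dyadic splitting by dominant coordinate, combined with the constraint that $\widehat{h_j}/\xi_j^{r_j-1}$ is independent of $j$, is precisely what circumvents this obstruction and keeps the right-hand side in terms of pure derivatives only.
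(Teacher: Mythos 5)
Your opening identity is correct but it is only a rewriting: since $\widehat{h_j}(\xi)=(2\pi i\xi_j)^{r_j-1}\widehat f(\xi)$, the integral $\int_{\R^n}\prod_j|\widehat{h_j}|\,d\xi$ \emph{is} the left-hand side of the theorem up to a constant, so nothing has been reduced and all the content lies in the dyadic step --- and that step does not close. On $D_{\nu,k}$ your two pointwise bounds give $\prod_j|\widehat{h_j}(\xi)|\le c\,2^{\nu(\sum_jr_j-n)}|\widehat f(\xi)|^n$ and $|\widehat f(\xi)|\le c\,2^{-\nu r_k}\|D_k^{r_k}f\|_1$, and since $|D_{\nu,k}|\le c\,2^{\nu n}$ the contribution of the piece $D_{\nu,k}$ is at best $c\,2^{\nu(\sum_jr_j-nr_k)}\|D_k^{r_k}f\|_1^{\,n}$. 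The exponent $\sum_jr_j-nr_k$ cannot vanish for every $k$ unless all $r_j$ coincide, and even when it vanishes the series $\sum_{\nu\in\Z}2^{\nu\cdot 0}$ diverges: purely sup-norm bounds on each shell can never produce a convergent sum over $\nu\in\Z$. One must keep genuine integrability of $\widehat f$ over the shells, which is exactly what Theorem \ref{Bourg} supplies --- but that theorem is stated only for $W_1^1$, carries no weight $\prod_j|\xi_j|^{r_j-1}$, has an additive right-hand side, and is itself the deep ingredient; invoking it ``applied to $f$'' for general $r_j$ with the anisotropic weight is essentially assuming what is to be proved. Two further problems: splitting by $\max_j|\xi_j|$ is the wrong (isotropic) geometry --- for $n=2$, $r_1=1$, $r_2=3$ and $\xi=(T,T)$ the useful decay comes from $D_2^3f$, not from whichever coordinate is largest, so the natural partition is by $\max_j|\xi_j|^{1/r_j}$ (compare the exponents $s_j$ in Lemma \ref{mod_estim}), though even then sup-bounds alone give $2^{\nu(\sum_jr_j^2-nr_k^2)}$ and still do not sum; and passing from any additive bound $\sum_k\|D_k^{r_k}f\|_1$ to the product on the right requires the anisotropic rescaling $f(\lambda_1x_1,\dots,\lambda_nx_n)$ with optimization over $\lambda$, which you never perform (this part, at least, is standard).

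For comparison: the survey contains no proof of this theorem; it refers to \cite{K2001} (and, for $n=2$, to Pelczy\'nski--Senator), and the surrounding text explains that the method there is entirely different --- one estimates $\widehat f^{\,*}(t)$ by partial moduli of continuity in some $L^p$, $1<p\le 2$ (Lemma \ref{mod_estim}), and then controls those moduli of continuity by the $L^1$-norms of the pure derivatives via the Besov-space embedding theorem. So your frequency-side dyadic strategy is a genuinely different route, but as written it has a real gap at the summation stage, and the correct identification of the Ornstein obstruction does not by itself certify that the proposed decomposition circumvents it.
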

\vskip 4pt For $n=2$ this result was obtained by other methods by
Pelczy\'nski and Senator \cite{PeSe}.

\vskip 6pt

 The proofs of these results can be subdivided in two
parts. The main (and much more involved part) is the following
theorem which we have already formulated above.

For $r>0,$ let $\bar r$ be the least integer such that $r\le
\bar{r}.$

\begin{teo} Let $r_1,...,r_n$ be positive numbers and let
$$
r=n\left(\sum_{j=1}^n\frac{1}{r_j}\right)^{-1}, \,\, 1\le
p<q<\infty, \,\, \varkappa=1-\frac nr\left(\frac 1p-\frac
1q\right)>0.
$$
Set $\a_j=\varkappa r_j \,\,(j=1,...,n).$ If $1<p<\infty$ and
$n\ge 1$, or $p=1$ and $n\ge 2,$ then for  every function $f\in
L^{r_1,...,r_n}_p(\R^n)$
$$
\sum_{j=1}^n\left(\int_0^\infty
[h^{-\a_j}\o_j^{\bar{r}_j}(f;h)_q]^p\frac{dh}{h}\right)^{1/p}\le c
\sum_{j=1}^n||D_j^{r_j}f||_p.
$$
\end{teo}

\vskip 4pt

For $r_1=...=r_n=1$ this theorem was proved in our work
\cite{K1988} in 1988. Anisotropic case was studied in our works
\cite{K1993}  and \cite{K2001}.

\vskip 4pt

The other part of the proofs was based on the estimates via moduli
of continuity.

\vskip 8pt

\centerline{\bf Estimates of Fourier transforms}

\centerline {\bf in terms of moduli of continuity}

\vskip 8pt

Apparently the first estimates of this type were obtained by S.
Bernstein in 1914. Bernstein proved that for each periodic
function $f\in \operatorname{Lip} \a$ ($\a>1/2$) the sequence of
its Fourier coefficients belongs to $l^1,$ and this property fails
for $\a=1/2.$

\vskip 8pt

The following lemma was proved in our paper \cite{K1997}.

\begin{lem}\label{mod_estim} Let $r_1,...,r_n\in \N$ $(n\in\N)$,
$$
r=n\left(\sum_{j=1}^n \frac{1}{r_j}\right)^{-1},
\quad\mbox{and}\quad s_j=\frac{r}{n r_j}.
$$
Let $f\in L^p(\R^n),$ $1\le p\le 2.$ Then
\begin{equation}\label{kol_10}
\widehat{f}^*(t)\le c
t^{1/p-1}\sum_{j=1}^n\o_j^{r_j}(f;t^{-s_j})_p \quad(t<0).
\end{equation}
\end{lem}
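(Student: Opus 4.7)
The plan is to combine the Hausdorff--Young inequality with an averaging argument applied to Fourier transforms of finite differences. Since
$$
\widehat{\Delta_j^{r_j}(he_j)f}(\xi)=(e^{2\pi ih\xi_j}-1)^{r_j}\widehat f(\xi)
$$
and $|e^{2\pi ih\xi_j}-1|=2|\sin\pi h\xi_j|$, the Hausdorff--Young theorem applied to $\Delta_j^{r_j}(he_j)f\in L^p(\R^n)$ yields, for each $h>0$ and each $j$,
$$
\int_{\R^n}|2\sin\pi h\xi_j|^{p'r_j}|\widehat f(\xi)|^{p'}\,d\xi\le\o_j^{r_j}(f;h)_p^{p'}.
$$

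Next, using that $\sum_j s_j=\sum_j r/(nr_j)=1$ by the definition of $r$, fix a constant $M$ so that the rectangular box
$$
B_t=\{\xi\in\R^n:|\xi_j|\le Mt^{s_j}\text{ for all }j\}
$$
has measure $(2M)^n t^{\sum_j s_j}=t/2$. The complement of $B_t$ is covered by the $n$ slabs $E_j=\{|\xi_j|>Mt^{s_j}\}$, so everything reduces to an $L^{p'}$-bound for $\widehat f$ on each $E_j$. To obtain one, set $h_j=(Mt^{s_j})^{-1}$ and integrate the above Hausdorff--Young inequality in $h$ over $[h_j,2h_j]$; exchanging the order of integration gives
$$
\int_{\R^n}|\widehat f(\xi)|^{p'}\Big(\int_{h_j}^{2h_j}|2\sin\pi u\xi_j|^{p'r_j}\,du\Big)d\xi\le h_j\,\o_j^{r_j}(f;2h_j)_p^{p'}.
$$

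The technical heart of the argument is a positive lower bound for the inner $u$-integral on $E_j$. For $|\xi_j|\ge 1/h_j$, the substitution $v=u\xi_j$ rewrites it as $|\xi_j|^{-1}\int_{h_j|\xi_j|}^{2h_j|\xi_j|}|2\sin\pi v|^{p'r_j}\,dv$; since the $v$-interval has length $h_j|\xi_j|\ge 1$ and $|\sin\pi v|^{p'r_j}$ is $1$-periodic with strictly positive mean, this integral is at least $ch_j|\xi_j|$, hence the $u$-integral is at least $ch_j$. Restricting the preceding display to $E_j$ therefore gives
$$
\int_{E_j}|\widehat f(\xi)|^{p'}\,d\xi\le C\,\o_j^{r_j}(f;2h_j)_p^{p'}\le C'\,\o_j^{r_j}(f;t^{-s_j})_p^{p'},
$$
the constant $2M^{-1}$ being absorbed by $\o_j^{r_j}(f;\mu\d)_p\le(1+\mu)^{r_j}\o_j^{r_j}(f;\d)_p$.

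Summing over $j$ and applying Chebyshev's inequality, for every $\l>0$,
$$
|\{|\widehat f|>\l\}|\le|B_t|+\l^{-p'}\int_{B_t^c}|\widehat f|^{p'}\le\frac{t}{2}+\frac{A}{\l^{p'}}\sum_{j=1}^n\o_j^{r_j}(f;t^{-s_j})_p^{p'}.
$$
Choosing $\l$ so that the right-hand side equals $t$ gives
$$
\widehat f^*(t)\le\Big(\frac{2A}{t}\Big)^{1/p'}\Big(\sum_{j=1}^n\o_j^{r_j}(f;t^{-s_j})_p^{p'}\Big)^{1/p'};
$$
since $p'\ge 1$ the $\ell^{p'}$-sum in $j$ is dominated by the $\ell^1$-sum, and $1/p'=1-1/p$, which yields (\ref{kol_10}). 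The only delicate point throughout is the positive lower bound on the averaged multiplier over $E_j$; the rest is bookkeeping.
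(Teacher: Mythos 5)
Your argument is correct for $1<p\le 2$, and its engine is the same as the paper's: the identity $\widehat{\Delta_j^{r_j}(h)f}(\xi)=(e^{2\pi ih\xi_j}-1)^{r_j}\widehat f(\xi)$, the relation $\sum_j s_j=1$ used to split the complement of a box of measure comparable to $t$ into the slabs $\{|\xi_j|\gtrsim t^{s_j}\}$, an average over $h$ to extract a positive lower bound for the multiplier on each slab, and Hausdorff--Young. Where you genuinely differ is the endgame: the paper fixes a set $E$ of measure $t$ on which $|\widehat f|\ge\widehat f^*(t)$, selects a single $j$ with $|E\cap A_j|\ge t/(2n)$, and traps the $L^1$ mean of $\widehat{\Delta_j^{r_j}(h)f}$ over $Q=E\cap A_j$ between $\tfrac12|Q|\widehat f^*(t)$ and $|Q|^{1/p}\|\Delta_j^{r_j}(h)f\|_p$ (H\"older plus Hausdorff--Young), whereas you prove an $L^{p'}$ bound on every slab and convert it to a rearrangement bound by Chebyshev. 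The two are essentially dual formulations; yours yields the slightly stronger slab-wise $L^{p'}$ information, while the paper's version uses only $\|\widehat g\|_{p'}\le\|g\|_p$ through the inequality $\int_Q|\widehat g|\le|Q|^{1/p}\|g\|_p$, which remains meaningful at $p=1$. That is the one point you must patch: your displayed inequalities are vacuous when $p=1$, since $p'=\infty$ turns the integrals $\int|2\sin\pi h\xi_j|^{p'r_j}|\widehat f|^{p'}$ and the Chebyshev step into nonsense as written. The fix is immediate --- for $p=1$ Hausdorff--Young degenerates to the pointwise bound $|2\sin\pi h\xi_j|^{r_j}|\widehat f(\xi)|\le\|\Delta_j^{r_j}(h)f\|_1$, your $h$-average then gives $|\widehat f(\xi)|\le C\,\o_j^{r_j}(f;t^{-s_j})_1$ for every $\xi\in E_j$, and $|\{|\widehat f|>\l\}|\le|B_t|<t$ finishes --- and this is exactly the formal $p'\to\infty$ limit of your final display, but it should be said. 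A last pedantic remark: $\lambda_{\widehat f}(\l)\le t$ does not quite force $\widehat f^*(t)\le\l$ for the left-continuous rearrangement, so choose $\l$ to make the right-hand side $\le 3t/4$, say; this only changes the constant.
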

\begin{proof} Set $\f_{h,j}(x)=\Delta_j^{r_j}(h)f(x)\,\, (h>0).$
Let $E\subset \R^n$ be a set of measure $t$ such that
$$
|\widehat{f}(\xi)|\ge \widehat{f}^*(t)\quad\mbox{for any} \quad
\xi\in E.
$$
Further, let
$$
A_j=\{\xi: |\xi_j|\ge \frac{t^{s_j}}{2}\}, \quad j=1,...,n.
$$
We have $\sum_{j=1}^n s_j=1.$ Thus,
$$
\left|\left(\bigcup_{j=1}^n
A_j\right)^c\right|=\left|\bigcap_{j=1}^n
A_j^c\right|=\frac{t}{2^n}.
$$
This implies that $|E\cap(\cup_{j=1}^n A_j)|\ge t/2.$ Hence, there
exists $j=j(t)$ such that $|E\cap A_j|\ge t/(2n).$ Set $Q=E\cap
A_j.$ We have
$$
\widehat{\f_{h,j}}(\xi)= \widehat{f}(\xi)\sigma(h\xi_j),
\quad\mbox{where}\quad \sigma(u)=(e^{2\pi ui}-1)^{r_j}.
$$
Set $\d=r_j t^{-s_j}.$ We show that
\begin{equation*}
\frac{1}{\d}\int_0^\d|\sigma(h\xi_j)|\,dh>\frac12\quad\mbox{for
any}\quad \xi\in Q.
\end{equation*}
Indeed,
$$
|\s(u)\ge (1-\cos 2\pi u)^{r_j}\ge 1-r_j\cos 2\pi u.
$$
Thus, if $|\l\ge t^{s_j}/2,$ then
$$
\frac{1}{\d}\int_0^\d|\sigma(\l h)|\,dh\ge 1-\frac{r_j\sin 2\pi
\l\d}{2\pi \l\d}\ge 1-\frac1\pi.
$$

 Now we have
 $$
 \begin{aligned}
\frac{1}{\d}\int_0^\d dh\int_Q|\widehat{\f_{h,j}}(\xi)|d\xi
&=\frac{1}{\d}
dh\int_0^\d\int_Q|\widehat{f}(\xi)\sigma(h\xi_j)|d\xi\\
&\ge \frac12|Q|\widehat{f}^*(t)\ge \frac{t}{4n}widehat{f}^{*}(t).
\end{aligned}
$$
On the other hand, by the Hausdorff-Young inequality,
$$
\begin{aligned}
\int_Q|\widehat{\f_{h,j}}(\xi)|d\xi &\le
|Q|^{1/p}\left(\int_Q|\widehat{\f_{h,j}}(\xi)|^{p'}d\xi\right)^{1/p'}\\
& \le |Q|^{1/p}||\f_{h,j}||_p\le t^{1/p}||\f_{h,j}||_p.
\end{aligned}
$$
Therefore,
$$
\widehat{f}^*(t)\le c t^{1/p-1}\frac{1}{\d}\int_0^\d
||\f_{h,j}||_p dh\le c' t^{1/p-1}\o_j^{r_j}(f;t^{-s_j})_p.
$$

\end{proof}

\vskip 4pt

Assume that $\a_j>0$ and $1\le p,\t<\infty.$ Let $r_j$ be the
least integer such that $r_j>\a_j.$ Let
$$
||f||_{b_{p,\t}^{\a_1,...,\a_n}}=\sum_{j=1}^n\left(\int_0^{\infty}\left(t^{-\a_j}\o_k^{r_j}(f;t)_p\right)^\t
\frac{dt}{t}\right)^{1/\t}
$$
denote the homogenous Besov norm.

\vskip 4pt

 Applying Lemma \ref{mod_estim}, we immediately obtain

\begin{teo}\label{Four_1} Let $\a_j>0$ $(j=1,...,n)$, $1\le p\le
2,$ $1\le\t\le\infty,$
$$
\a=n\left(\sum_{j=1}^n\frac1{\a_j}\right)^{-1},
\quad\mbox{and}\quad q=\left(\frac{\a}{n}+\frac1{p'}\right)^{-1}.
$$
If $f\in B_{p,\t}^{\a_1,...,\a_n}(\R^n),$ then $\widehat{f}\in
L^{q,\t}(\R^n),$ and
$$
||\widehat{f}||_{q,\t}\le c ||f||_{b_{p,\t}^{\a_1,...,\a_n}}.
$$
\end{teo}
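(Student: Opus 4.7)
The plan is to apply Lemma~\ref{mod_estim} with the integer orders $r_j$ equal to the least integer strictly greater than $\alpha_j$ (so that these are precisely the orders appearing in the definition of $\|f\|_{b_{p,\theta}^{\alpha_1,\ldots,\alpha_n}}$) and with the exponents $s_j = \alpha/(n\alpha_j)$. The normalization $\sum_{j=1}^n s_j = 1$ follows from the definition $\alpha = n(\sum_j 1/\alpha_j)^{-1}$. Inspecting the proof of Lemma~\ref{mod_estim}, the only place where the specific relation between $s_j$ and $r_j$ is used is via the condition $\sum s_j = 1$ (needed so that $|\bigcap_j A_j^c| = t/2^n$); the rest of the argument goes through verbatim with $s_j$ chosen freely. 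With our choice we thus obtain
$$
\widehat f^*(t)\le c\, t^{1/p-1}\sum_{j=1}^n \omega_j^{r_j}(f; t^{-s_j})_p,\qquad t>0.
$$

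Next I would rewrite the exponent of $t$ in the $L^{q,\theta}$ norm. From $1/q = \alpha/n + 1/p'$ one gets $1/q+1/p-1 = \alpha/n$, so
$$
t^{1/q}\widehat f^*(t)\le c\, t^{\alpha/n}\sum_{j=1}^n \omega_j^{r_j}(f; t^{-s_j})_p.
$$
Taking the $L^\theta((0,\infty), dt/t)$ norm (with the usual sup modification when $\theta=\infty$) and applying the triangle inequality to the finite sum, I obtain
$$
\|\widehat f\|_{q,\theta}\le c\sum_{j=1}^n \left(\int_0^\infty \bigl[t^{\alpha/n}\omega_j^{r_j}(f;t^{-s_j})_p\bigr]^\theta\frac{dt}{t}\right)^{1/\theta}.
$$

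To finish, I would perform the change of variable $u = t^{-s_j}$ in each summand. Then $dt/t = s_j^{-1}du/u$, and by our choice of $s_j$,
$$
t^{\alpha/n}=u^{-\alpha/(ns_j)}=u^{-\alpha_j}.
$$
The $j$-th integral therefore equals $s_j^{-1}$ times $\int_0^\infty [u^{-\alpha_j}\omega_j^{r_j}(f;u)_p]^\theta\,du/u$, which is exactly (the $\theta$-th power of) the $j$-th term in $\|f\|_{b_{p,\theta}^{\alpha_1,\ldots,\alpha_n}}$. Summing over $j$ yields the desired bound. The case $\theta=\infty$ is handled identically with suprema in place of $L^\theta(du/u)$ norms.

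The only delicate point—and the main obstacle, if one insists on a literal reading of Lemma~\ref{mod_estim}—is the observation that the lemma remains valid for any positive $s_j$ with $\sum_j s_j = 1$, and not only for the specific $s_j = r/(nr_j)$ tied to the integer orders $r_j$; this extra freedom is essential because we need $s_j$ proportional to $1/\alpha_j$ rather than $1/r_j$ to match the weights in the Besov seminorm after the change of variables.
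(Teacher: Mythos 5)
Your proposal is correct and follows the paper's own route: Theorem \ref{Four_1} is deduced from Lemma \ref{mod_estim} combined with the identity $1/q+1/p-1=\a/n$ and the change of variables $u=t^{-s_j}$. You have also correctly identified the one point the paper glosses over with ``we immediately obtain'': the lemma must be invoked with $s_j=\a/(n\a_j)$ rather than the stated $s_j=r/(nr_j)$, and this is legitimate because its proof uses only $s_j>0$ and $\sum_j s_j=1$ (the choice $\d=r_jt^{-s_j}$ and the lower bound for $\frac1\d\int_0^\d|\s(h\xi_j)|\,dh$ are unaffected).
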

\vskip 6pt

In the case $\a_1=...=\a_n=\a$ this result was obtained by other
methods by A. Pietsch \cite[p. 270]{Pi} (1980). Taking $\t=1$ and
$\a=n/p,$ we obtain the Bernstein-Szasz theorem:  if $f\in
B_{p,1}^{n/p}(\R^n)\,\,\, (1\le p\le 2),$ then $\widehat{f}\in
L^1(\R^n).$

\vskip 6pt

Return to the inequality
 \begin{equation}\label{Question}
||\widehat{f}||_{n/r,1}
 \le
c \sum _{k=1}^n\|D_k^{r_k} f\|_1,\quad r=n\left(\sum_{k=1}^n
\frac{1}{r_k}\right)^{-1}<n.
\end{equation}
As we have already mentioned, the proof of this inequality was
obtained by the combination of estimates of the rearrangement
$\widehat{f}^*(t)$ via moduli of continuity in some $L^p, \,\,
1<p<2,$ and estimates of these moduli of continuity via norms of
partial derivatives in $L^1.$ It would be interesting to find {\it
direct estimates} of $\widehat{f}^*(t)$ involving rearrangements
of the derivatives (and, maybe, rearrangement of $f$) and yielding
(\ref{Question}). We observe that Jurkat-Sampson inequality
$$
\int_0^t \widehat{\f}^{*}(u)^2 du\le c\int_0^t\left(\int_0^{1/u}
\f^*(s)ds\right)^2\,du= c\int_{1/t}^\infty \f^{**}(u)^2\, du
$$
applied to $\f=D_k^{r_k} f$ cannot work since at the right-hand
side we have the operator $\f^{**}$ which is unbounded in $L^1.$

In our joint work with J. Garc\'{i}a-Cuerva \cite{GCK}  we
obtained sharp weighted rearrangement estimates of Fourier
transforms in terms of moduli of continuity (we studied isotropic
case).

\vskip 8pt

Finally, we observe that  the relations between moduli of
smoothness of functions and  growth properties of their Fourier
transforms were studied in many works, especially in the last 20
years. We refer to the works of Benedetto and Heinig \cite{BeHe}
and Gorbachev and Tikhonov \cite{GoTi} and references therein.

\newpage


\begin{thebibliography}{99}


\bibitem{Adams} D.R. Adams, {\it A sharp inequality of J. Moser for higher order
derivatives,} Ann. of Math. {\bf 128}  (1988), no. 2, 385 -– 398.

\bibitem{AdHe} D.R. Adams and L.I.  Hedberg, {\it Function spaces and
potential theory}, Springer-Verlag, Berlin, 1996.

\bibitem{Bab} K.I. Babenko, {\it An inequality in the theory of
Fourier integrals,}Izv. Akad. Nauk SSSr, Ser. Mat. {\bf 25}
(1961), 531 -- 542; English transl., Amer. Math. Soc. Transl. (2)
{\bf 44}, 115 -- 128.

\bibitem{Bar} F. Barthe, {\it Optimal Young's inequality and its
converse: a simple proof,} Geom. Funct. Anal. {\bf 8} (1998), 234
-- 242.

\bibitem{Beckner} W. Beckner, {\it Inequalities in Fourier
analysis,} Ann. of Math. {\bf 102} (1975), 159 -- 182.

\bibitem{BeHe} J.J. Benedetto and H.P. Heinig, {\it Weighted
Fourier inequalities: New proofs and generalizations,} J. Fourier
Anal. Appl. {\bf 9} (2003), 1 -- 37.

\bibitem{BS} C. Bennett and R. Sharpley,  {\it Interpolation of
Operators,} Academic Press, Boston 1988.

\bibitem{BBP} J. Bergh, V. Burenkov and L.E. Persson, {\it Best
constants in reversed Hardy's inequalities for quasimonotone
functions,} Acta Sci. Math. (Szeged) {\bf 59} (1994), 221 -- 239.

\bibitem{BIN} O.V. Besov, V.P. Il'in and S.M. Nikol'skii,
{\it Integral Representations of Functions and Embedding
Theorems}, Vol. 1-2, Winston, Washington, D.C., Halsted Press, New
York, 1978-1979.


\bibitem{Boch} S.V. Bochkarev, {\it Hausdorff-Young-Riesz theorem
in Lorentz spaces and multiplicative inequalities,} Proc. Steklov.
Inst. Math. {\bf 219} (1997), 96 -- 107.


\bibitem{Bour1} J. Bourgain, {\it A Hardy inequality in Sobolev
spaces,} Vrije University, Brussels, 1981.

\bibitem{Bour2} J. Bourgain, {\it Some examples of multipliers in
Sobolev spaces}, IHES, 1985.

\bibitem{BrLieb} H.J. Brascamp and E.H. Lieb, {Best constants in
Young's inequality, its converse and its generalization to more
than three functions,} Adv. Math. {\bf 20} (1976), 151 -- 173.


\bibitem{Cald} A.P. Calder\'on, {Lebesgue spaces of differentiable functions and distributions}, Proc. Symp. Pure Math. {\bf 5} (1961), 33 --49.


\bibitem{ChR} K.M. Chong and N.M. Rice, {\it Equimeasurable
rearrangements of functions}, Queen's Papers in Pure and Appl.
Math. {\bf 28}, Queen's University, Kingston, Ont. 1971.



\bibitem{CKOP} A. Cianchi, R. Kerman, B. Opic and L. Pick,
{\it A sharp rearrangement inequality for the fractional maximal
operator,} Studia Math. {\bf 138} (2000), no. 3, 277–284.



\bibitem{GCK} J. Garc\'{i}a-Cuerva and V.I. Kolyada, {\it
Rearrangement estimates for Fourier transforms in $L^p$ and $H^p$
in terms of moduli of continuity,} Math. Nachr. {\bf 228} (2001),
123 -- 144.


\bibitem{GoTi} D. Gorbachev and S. Tikhonov, {\it Moduli of
smoothness and growth properties of Fourier transforms: Two-sided
estimates,} J. Appr. Theory {\bf 164} (2012), 1283 -- 1312.


\bibitem{HL1} G.H. Hardy and J.E. Littlewood, {\it Some properties of
fractional integrals. I,} Math. Z. {\bf 27} (1928), 565 -- 606.

\bibitem{HL2} G.H. Hardy and J.E. Littlewood, {\it Some new
properties of Fourier constants,} Math. Annalen {\bf 97} (1927),
159 -- 209.

\bibitem{HLP} G.H. Hardy, J.E. Littlewood, and G. P\'olya, {\it Inequalities,}
2nd ed., Cambridge University Press, Cambridge, 1967.

\bibitem{HL3} G.H. Hardy and J.E. Littlewood, {\it Some new
properties of Fourier constants,} J. London Math. Soc. {\bf 6}
(1931), 3 -- 9.

\bibitem{Hed}  L. Hedberg, {\it On certain convolution
inequalities,} Proc. Amer. Math. Soc. {\bf 36} (1972), 505 -- 510.

\bibitem{Herz}   C.S. Herz, {\it Lipschitz spaces and Bernstein's
theorem on absolutely convergent Fourier transforms,} J. Math.
Mech. {\bf 18} (1968), 283--324.

\bibitem{Hunt} R.A. Hunt, {\it On L(p,q)  spaces,} Enseignement Math. {\bf 12} (2)(1966), 249–276.

\bibitem{JT}  M. Jodeit, Jr. and A. Torchinsky, {\it Inequalities
for the Fourier transform,} Studia Math. {\bf 37} (1971), 245 --
276.


\bibitem{JuSa} W.B. Jurkat and G. Sampson, {\it On maximal
rearramgement inequalities for the Fourier transform,} Trans.
Amer. Math. Soc. {\bf 282} (1984), 625 -- 643.

\bibitem{K1985}
V.I. Kolyada, {\it On embedding $H_p^{\o_1,...,\o_\nu}$ classes},
Mat. Sb. {\bf 127} (1985), no. 3, 352 -- 383; English transl.:
Math. USSR-Sb. {\bf 55} (1986), no. 2, 351 -- 381.


\bibitem{K1988} V.I. Kolyada, {\it On relations between moduli of continuity in
different metrics}, Trudy Mat. Inst. Steklov {\bf 181} (1988),
117--136; English transl. in Proc. Steklov Inst. Math. {\bf 181}
(1989), 127--148.


\bibitem{K1993} V.I. Kolyada,  {\it On embedding of Sobolev spaces}, Mat. Zametki {\bf 54}
 (1993), no. 3,  48 -- 71; English transl.:  Math. Notes {\bf 54}  (1993), no. 3, 908 --
 922.

\bibitem{K1997} V.I. Kolyada,  {\it Estimates of Fourier
transforms in Sobolev spaces,} Studia Math. {\bf 125} (1997), 67
-- 74.

\bibitem{K1998}
V.I. Kolyada, {\it Rearrangement of functions and embedding of
anisotropic spaces of Sobolev type}, East J. Approx. {\bf 4}
(1998), no. 2, 111 -- 199.


\bibitem {K2001}  V.I. Kolyada, {\it Embeddings of fractional Sobolev spaces and estimates of Fourier transforms,}
 Mat. Sb. {\bf 192}, no. 7 (2001), 51 -- 72; English transl.:  Sbornik:
 Mathematics {\bf 192}, no. 7 (2001), 979 -- 1000.


 \bibitem {K2007}  V.I. Kolyada, {\it On embedding theorems}, in:
Nonlinear Analysis, Function Spaces and Applications, vol. 8 (Proceedings of the Spring School held in Prague, 2006), Prague, 2007, 35--94.


 \bibitem{KPS} S.G. Krein, Yu.I. Petunin, and E.M. Semenov, {\it
Interpolation of linear operators,} Nauka, Moscow 1978: English
transl.:  Transl. Math. Monogr. {\bf 54}, Amer. Math. Soc.,
Providence, 1982.

\bibitem{LL} E.H. Lieb and M. Loss, {\it Analysis}, 2nd ed., Graduate studies in mathematics,
Vol. 14, AMS, 2001.

\bibitem{Liz1} P.I. Lizorkin, {\it
Embedding theorems for functions from the spaces $L_p$,} Dokl. Akad Nauk SSSR {\bf 143} (1962), 1042 -- 1045.


\bibitem{Liz} P.I. Lizorkin, {\it Non-isotropic Bessel potential.
Embedding theorems for the Sobolev spaces $L_p^{r_1,...,r_n}$ with
fractional derivatives,} Dokl. Akad. Nauk SSSR {\bf 170} (1966),
508 -- 511 (in Russian).

\bibitem{Moser} J. Moser, {\it A sharp form of an inequality by N.
Trudinger,} Indiana Univ. Math. J. {\bf 20} (1971), 1077 -- 1092.

\bibitem{MuWh} B. Muckenhoupt and R. Wheeden, {\it Weighted norm
inequalities for fractional integrals,} Trans. Amer. Math. Soc.
{\bf 192} (1974), 261 -- 274.

\bibitem{Nakai} E. Nakai, {\it REcent topics of fractional
integrals}, Sugaku Expositions {\bf 20} (2007), N 2, 215 -- 235.

\bibitem{Neil} R. O'Neil, {\it Convolution operators and $L(p,q)$
spaces,} Duke Math. J. {\bf 30} (1963), 129 -- 142.

\bibitem{Net1} Yu.V. Netrusov, {\it Embedding theorems for the
Lizorkin-Triebel classes,} Zap. Nauchn. Sem. LOMI {\bf 159}
(1987), 103--112 (in Russian); English transl.: J. Soviet Math.
{\bf 47} (1989), 2896 -- 2903.

\bibitem{Nik} S.M. Nikol'ski\u\i, {\it Approximation of Functions of Several
 Variables and Imbedding Theorems,} Springer -- Verlag, Berlin -- Heidelberg -- New York,
 1975.

\bibitem{Orn} D. Ornstein, {\it A non-inequality for differential
operators in the $L_1$norm}, Arch. Rat. Mech. Anal. {\bf 11}
(1962), 40 -- 49.

\bibitem{Paley} R.E.A.C. Paley, {\it Some theorems on orthogonal
functions,} Studia Math. {\bf 3}, 226 -- 239.

\bibitem{PeWo} A. Pelczy\'nski and M. Wojciechowski,
{\it Molecular decompositions and embedding theorems for
vector-valued Sobolev spaces with gradient norm,} Studia Math.
{\bf 107}, nº. 1, 61--100 (1993)




\bibitem{PeSe} A. Pelczy\'{n}ski and K. Senator, {\it On isomorphisms of
anisotropic Sobolev spaces with
        "classical Banach spaces" and a Sobolev type
        embedding theorem,}
  Studia Math. {\bf 84} (1986), 169--215.


\bibitem{Pi} A. Pietsch, {\it Eigenvalues and $s$-Numbers,}
Cambridge University Press, Cambridge-London-New York, 1987.


\bibitem{Pitt} H.R. Pitt, {Theorems on Fourier series and power
series,} Duke Math. J. {\bf 3} (1937), 747 -- 755.

\bibitem{Smith} K.T. Smith, {Inequalities for formally positive
integro-differential forms}, Bull. Amer. Math. Soc. {\bf 67}
(1961), 368 -- 370.

\bibitem{Sob1} S.L. Sobolev, {\it On a theorem of functional
analysis,} Mat. Sb. {\bf 4 (46)} (1938), 471 -- 497 (in Russian).

  \bibitem{Sob2} S.L. Sobolev,  {\it Some applications of functional analysis in
   mathematical physics,} Translated from the third Russian edition.
   Translations of Mathematical Monographs, 90.
   American Mathematical Society, Providence, RI,
   1991.

\bibitem{St1} E.M. Stein, {\it Interpolation of linear operators,}
Trans. Amer. Math. Soc. {\bf 83} (1956), 482 -- 492.


\bibitem{St} E.M. Stein, {\it Singular Integrals and
Differentiability Properties of Functions}, Princeton Univ.
Press,~1970.


\bibitem{SW}
E.M. Stein and G.Weiss, {\it Introduction to Fourier Analysis on
Euclidean Spaces}, Princeton Univ. Press,~1971.


\bibitem{Str} R.S. Strichartz, {\it Multipliers on fractional
Sobolev spaces,} J. Math. Mech. {\bf 16} (1967), 1031 -- 1060.

\bibitem{Z} A. Zygmund, {\it Trigonometric series,} Third Edition,
Cambridge University Press, Cambridge-New York-Melbourne, 2002.


\end{thebibliography}
\end{document}